\documentclass[11pt]{amsart}

\usepackage[all]{xy}
\usepackage{color, pstricks}
\usepackage{mathrsfs}
\usepackage{calligra}
\usepackage{amsmath,amsthm, amssymb, amsgen,amsxtra,amsfonts, amsbsy} 
\usepackage[all]{xy}
\usepackage{verbatim}
\usepackage{url}
\usepackage{hyperref}
\usepackage{enumitem}

\DeclareMathOperator{\cha}{char}

\DeclareMathOperator{\Gal}{Gal}

\DeclareMathOperator{\diag}{diag}

\DeclareMathOperator{\shHom}{\mathscr{H}\text{\kern -3pt {\calligra\large om}}\,}

\begin{document}
%
%
%
\theoremstyle{definition}
\newtheorem{Definition}{Definition}[section]
\newtheorem*{Definitionx}{Definition}
\newtheorem{Convention}[Definition]{Convention}
\newtheorem{Construction}{Construction}[section]
\newtheorem{Example}[Definition]{Example}
\newtheorem{Examples}[Definition]{Examples}
\newtheorem{Remark}[Definition]{Remark}
\newtheorem{Setup}[Definition]{Setup}
\newtheorem*{Remarkx}{Remark}
\newtheorem{Remarks}[Definition]{Remarks}
\newtheorem{Caution}[Definition]{Caution}
\newtheorem{Conjecture}[Definition]{Conjecture}
\newtheorem*{Conjecturex}{Conjecture}
\newtheorem{Question}[Definition]{Question}
\newtheorem{Questions}[Definition]{Questions}
\newtheorem*{Acknowledgements}{Acknowledgements}
\newtheorem*{Organization}{Organization}
\newtheorem*{Disclaimer}{Disclaimer}
\theoremstyle{plain}
\newtheorem{Theorem}[Definition]{Theorem}
\newtheorem*{Theoremx}{Theorem}
\newtheorem{Theoremy}{Theorem}
\newtheorem{Proposition}[Definition]{Proposition}
\newtheorem*{Propositionx}{Proposition}
\newtheorem{Lemma}[Definition]{Lemma}
\newtheorem{Corollary}[Definition]{Corollary}
\newtheorem*{Corollaryx}{Corollary}
\newtheorem{Fact}[Definition]{Fact}
\newtheorem{Facts}[Definition]{Facts}
\newtheoremstyle{voiditstyle}{3pt}{3pt}{\itshape}{\parindent}%
{\bfseries}{.}{ }{\thmnote{#3}}%
\theoremstyle{voiditstyle}
\newtheorem*{VoidItalic}{}
\newtheoremstyle{voidromstyle}{3pt}{3pt}{\rm}{\parindent}%
{\bfseries}{.}{ }{\thmnote{#3}}%
\theoremstyle{voidromstyle}
\newtheorem*{VoidRoman}{}

\numberwithin{equation}{section} 

\makeatletter
\@addtoreset{equation}{section}
\makeatother

%
\newcommand{\prf}{\par\noindent{\sc Proof.}\quad}
\newcommand{\blowup}{\rule[-3mm]{0mm}{0mm}}
\newcommand{\cal}{\mathcal}
\newcommand{\Aff}{{\mathds{A}}}
\newcommand{\BB}{{\mathds{B}}}
\newcommand{\CC}{{\mathds{C}}}
\newcommand{\EE}{{\mathds{E}}}
\newcommand{\FF}{{\mathds{F}}}
\newcommand{\GG}{{\mathds{G}}}
\newcommand{\HH}{{\mathds{H}}}
\newcommand{\NN}{{\mathds{N}}}
\newcommand{\ZZ}{{\mathds{Z}}}
\newcommand{\PP}{{\mathds{P}}}
\newcommand{\QQ}{{\mathds{Q}}}
\newcommand{\RR}{{\mathds{R}}}
\newcommand{\Sph}{{\mathds{S}}}
\newcommand{\TT}{{\mathds{T}}}
\newcommand{\Liea}{{\mathfrak a}}
\newcommand{\Lieb}{{\mathfrak b}}
\newcommand{\Lieg}{{\mathfrak g}}
\newcommand{\Liem}{{\mathfrak m}}
\newcommand{\ideala}{{\mathfrak a}}
\newcommand{\idealb}{{\mathfrak b}}
\newcommand{\idealg}{{\mathfrak g}}
\newcommand{\idealj}{{\mathfrak j}}
\newcommand{\idealm}{{\mathfrak m}}
\newcommand{\idealn}{{\mathfrak n}}
\newcommand{\idealp}{{\mathfrak p}}
\newcommand{\idealq}{{\mathfrak q}}
\newcommand{\idealI}{{\cal I}}
\newcommand{\lin}{\sim}
\newcommand{\num}{\equiv}
\newcommand{\dual}{\ast}
\newcommand{\iso}{\cong}
\newcommand{\homeo}{\approx}
\newcommand{\mathds}[1]{{\mathbb #1}}
\newcommand{\mm}{{\mathfrak m}}
\newcommand{\pp}{{\mathfrak p}}
\newcommand{\qq}{{\mathfrak q}}
\newcommand{\rr}{{\mathfrak r}}
\newcommand{\pP}{{\mathfrak P}}
\newcommand{\qQ}{{\mathfrak Q}}
\newcommand{\rR}{{\mathfrak R}}
%
%
\newcommand{\OO}{{\cal O}}
\newcommand{\calA}{{\cal A}}
\newcommand{\calD}{{\cal D}}
\newcommand{\calM}{{\cal M}}
\newcommand{\calO}{{\cal O}}
\newcommand{\calP}{{\cal P}}
\newcommand{\calT}{{\cal T}}
\newcommand{\calU}{{\cal U}}
\newcommand{\numero}{{n$^{\rm o}\:$}}
\newcommand{\mf}[1]{\mathfrak{#1}}
\newcommand{\mc}[1]{\mathcal{#1}}
\newcommand{\into}{{\hookrightarrow}}
\newcommand{\onto}{{\twoheadrightarrow}}
\newcommand{\Spec}{{\rm Spec}\:}
\newcommand{\BigSpec}{{\rm\bf Spec}\:}
\newcommand{\Spf}{{\rm Spf}\:}
\newcommand{\Proj}{{\rm Proj}\:}
\newcommand{\Pic}{{\rm Pic }}
\newcommand{\Picloc}{{\rm Picloc }}
\newcommand{\Br}{{\rm Br}}
\newcommand{\Cl}{{\rm Cl}}
\newcommand{\NS}{{\rm NS}}
\newcommand{\id}{{\rm id}}
\newcommand{\Sym}{{\mathfrak S}}
\newcommand{\Alt}{{\mathfrak A}}
\newcommand{\Aut}{{\rm Aut}}
\newcommand{\Inn}{{\rm Inn}}
\newcommand{\Out}{{\rm Out}}
\newcommand{\Hol}{{\rm Hol}}
\newcommand{\Autp}{{\rm Aut}^p}
\newcommand{\End}{{\rm End}}
\newcommand{\Hom}{{\rm Hom}}
\newcommand{\Ext}{{\rm Ext}}
\newcommand{\ord}{{\rm ord}}
\newcommand{\perf}{{\rm pf}}
\newcommand{\coker}{{\rm coker}\,}
\newcommand{\divisor}{{\rm div}}
\newcommand{\Def}{{\rm Def}}
\newcommand{\et}{{\rm \acute{e}t}}
\newcommand{\loc}{{\rm loc}}
\newcommand{\ab}{{\rm ab}}
\newcommand{\piet}{{\pi_1^{\rm \acute{e}t}}}
\newcommand{\pietloc}{{\pi_{\rm loc}^{\rm \acute{e}t}}}
\newcommand{\piN}{{\pi^{\rm N}_1}}
\newcommand{\piNloc}{{\pi_{\rm loc}^{\rm N}}}
\newcommand{\Het}[1]{{H_{\rm \acute{e}t}^{{#1}}}}
\newcommand{\Hfl}[1]{{H_{\rm fl}^{{#1}}}}
\newcommand{\Hcris}[1]{{H_{\rm cris}^{{#1}}}}
\newcommand{\HdR}[1]{{H_{\rm dR}^{{#1}}}}
\newcommand{\hdR}[1]{{h_{\rm dR}^{{#1}}}}
\newcommand{\Torsloc}{{\rm Tors}_{\rm loc}}
\newcommand{\defin}[1]{{\bf #1}}
\newcommand{\oX}{\cal{X}}
\newcommand{\oA}{\cal{A}}
\newcommand{\oY}{\cal{Y}}
\newcommand{\calC}{{\cal{C}}}
\newcommand{\calL}{{\cal{L}}}
\newcommand{\bmu}{\boldsymbol{\mu}}
\newcommand{\balpha}{\boldsymbol{\alpha}}
\newcommand{\bL}{{\mathbf{L}}}
\newcommand{\bM}{{\mathbf{M}}}
\newcommand{\bW}{{\mathbf{W}}}
\newcommand{\bD}{{\mathbf{D}}}
\newcommand{\bT}{{\mathbf{T}}}
\newcommand{\bO}{{\mathbf{O}}}
\newcommand{\bI}{{\mathbf{I}}}
\newcommand{\AutScheme}{{\mathbf{Aut}}}
\newcommand{\InnScheme}{{\mathbf{Inn}}}
\newcommand{\OutScheme}{{\mathbf{Out}}}
\newcommand{\Norm}{{\mathbf{N}}}
\newcommand{\Cent}{{\mathbf{Z}}}
\newcommand{\Split}{{\mathbf{Split}}}
\newcommand{\BD}{{\mathbf{BD}}}
\newcommand{\BT}{{\mathbf{BT}}}
\newcommand{\BI}{{\mathbf{BI}}}
\newcommand{\BO}{{\mathbf{BO}}}
\newcommand{\C}{{\mathbf{C}}}
\newcommand{\Dic}{{\mathbf{Dic}}}
\newcommand{\SL}{{\mathbf{SL}}}
\newcommand{\PSL}{{\mathbf{PSL}}}
\newcommand{\PGL}{{\mathbf{PGL}}}
\newcommand{\MC}{{\mathbf{MC}}}
\newcommand{\GL}{{\mathbf{GL}}}
\newcommand{\Torus}{{\mathbf{T}}}
\newcommand{\Tors}{{\mathbf{Tors}}}

\newcommand{\bC}{{\mathbb{C}}}
\newcommand{\bR}{{\mathbb{R}}}

\newif\ifhascomments \hascommentstrue
\ifhascomments
  \newcommand{\matt}[1]{{\color{red}[[\ensuremath{\spadesuit\spadesuit\spadesuit} #1]]}}
  \newcommand{\christian}[1]{{\color{red}[[\ensuremath{\star\star\star} #1]]}}
\else
  \newcommand{\matt}[1]{}
  \newcommand{\christian}[1]{}
\fi

\makeatletter
\@namedef{subjclassname@2020}{\textup{2020} Mathematics Subject Classification}
\makeatother

\title[RDPs over nonclosed fields]{On rational double points over nonclosed fields}

\author{Christian Liedtke}
\address[CL]{TU M\"unchen, Zentrum Mathematik - M11, Boltzmannstr. 3, 85748 Garching bei M\"unchen, Germany}
\email{christian.liedtke@tum.de}

\author{Matthew Satriano}
\thanks{MS was partially supported by a Discovery Grant from the
  National Science and Engineering Research Council of Canada and a Mathematics Faculty Research Chair from the University of Waterloo.}
\address[MS]{Department of Pure Mathematics, University
  of Waterloo, Waterloo ON N2L3G1, Canada}
\email{msatrian@uwaterloo.ca}

\subjclass[2020]{14J17, 14L15, 14G17, 13A50}
\keywords{rational double point, linearly reductive quotient singularity, nonclosed
ground field, twisted forms}

\begin{abstract}
We compute the equations of all 
rational double point 
singularities and we determine their types
over perfect ground fields $k$ 
that arise as quotient singularities by finite 
linearly reductive subgroup schemes 
of $\SL_{2,k}$.
\end{abstract}
\maketitle
\setcounter{tocdepth}{1}

\section{Introduction}
In 1884, Felix Klein \cite{Klein} classified finite subgroups of $\SL_2(\CC)$ 
up to conjugacy and computed the resulting finite quotient singularities.
It turns out that there are two infinite series and three `exceptional' 
such groups and singularities.
The resulting singularities are precisely the 
rational double point singularities (RDP singularities for short),
which have shown up in many different contexts and 
which can be characterized in many different ways, see, for example,
\cite{Durfee}.

Over algebraically closed fields of characteristic $p>0$, 
one should work with finite group schemes rather than finite groups.
Moreover, one should require the group schemes to be 
linearly reductive,
which is automatic in characteristic zero.
The classification of finite linearly reductive
subgroup schemes of $\SL_{2,k}$ over an algebraically
closed field $k$ of characteristic $p>0$ is due to
Hashimoto \cite{Hashimoto}, but see also \cite[Section 4]{LS}.
If $p\neq\{2,3,5\}$, then every RDP singularity over an
algebraically closed field of characteristic $p$ 
arises as a quotient by a finite linearly reductive
subgroup scheme of $\SL_{2,k}$ - again, there
are two infinite series and three exceptional group schemes
and singularities.

This begs for the question what can be said about
RDP singularities over a field $k$ that is not algebraically
closed.
Let us recall that if $x\in X$ is an RDP singularity
over a field $k$, then there exists a 
minimal resolution of singularities $\pi:Y\to X$,
which is unique up to isomorphism.
The exceptional locus $\mathrm{Exc}(\pi)$
of $\pi$ is a union
of integral curves, whose dual resolution graph
$\Gamma=\Gamma(x\in X)$ is a finite Dynkin diagram,
called the \emph{type} of $x\in X$.
We will say that $x\in X$ is \emph{split}
or \emph{classical} if all integral curves
in $\mathrm{Exc}(\pi)$ are geometrically integral
or, equivalently, if 
$\Gamma(x\in X)=\Gamma(x_{\overline{k}},X_{\overline{k}})$.
It turns out that $x\in X$ is split if and only 
$\Gamma$ is simply-laced, that is, of type ADE.
We refer to \cite[\S24]{Lipman} for details.

It has long been known to the experts that not all types of 
RDP singularities
can be realized as quotients by finite subgroups of $\SL_2(k)$, 
even in characteristic zero and even over the real 
numbers $\RR$,
see, for example, \cite[Section 4.5]{BlumeMcKay} or
\cite[Chapter 6]{Slodowy}.

In this article, we will study finite linearly reductive
subgroup schemes of $\SL_{2,k}$ and their associated
quotient singularities in the case 
where $k$ is an arbitrary 
perfect field of characteristic $p\geq0$.

This turns out to be the correct generality to obtain
all types of RDP singularities over $k$.
The main result of this article is the following.

\begin{Theorem}\label{tm:twisted-invariants}
Let $k$ be a perfect field of characteristic $p\geq0$.
Let $G\subset\SL_{2,k}$ be a finite linearly reductive group scheme.
Then, $\Aff^2_k/G$ is given by one of the following equations:
\begin{center}
\begin{tabular}{ccll}
\phantom{XXX} & type & equation &   \\
\hline
\multicolumn{4}{l}{split forms} \\
\hline
& $A_{n-1}$ & $XY = Z^n$ & \\
& $D_{n+2}$ & $Y^2-X^2Z=Z^{n+1}$ &  $p\neq2$ \\
& $E_6$ & $X^2-4Y^4 = Z^3$ &  $p\neq2,3$\\
& $E_7$ & $X^2+4Y^3=YZ^3$&  $p\neq2,3$ \\
& $E_8$ & $X^2+Y^3+Z^5=0$ &  $p\neq2,3,5$ \\
\hline
\multicolumn{4}{l}{non-split forms of $A_{n-1}$} \\
\hline
& $B_{\beta(n)}$ & $dX^2-Y^2=4dZ^n$ &   $p\neq2$\\
& & $dX^2+XY+Y^2=Z^n$ & $p=2$ \\
\hline
\multicolumn{4}{l}{non-split forms of $D_{n+2}$ with $n\geq3$} \\
\hline
 & $C_{n+1}$ & $Y^2-X^2Z=4dZ^{n+1}$ &   $p\neq2$ \\
\hline
\multicolumn{4}{l}{non-split forms of $E_6$} \\
\hline
& $F_4$ & $X^2-4dY^4 = Z^3$ &  $p\neq2,3$ \\
\hline
\multicolumn{4}{l}{non-split forms of $D_4$} \\
\hline
 & $C_3$ & $Y^2-X^2Z=4dZ^3$ &   $p\neq2$ \\
& $G_2$ & $2a Z^2 = bX(\Delta X^2-9Y^2)$ & $p\neq2,3$\\
&&\phantom{$12345 a Z^2 =$} $+Y(\Delta X^2-Y^2)$ & \\
&& $Z^2 = bX^3 + aX^2Y+ Y^3$ & $p=3$
\end{tabular}
\end{center}
More precisely:
\begin{itemize}
\item If $\cha k\neq2$ and $d\in k^\times\setminus k^{\times2}$, then 
    $$
    d(X^2-4Z^n) \,=\,Y^2\,
    $$
    defines a $B_{\beta(n)}$-singularity.
    It splits over $k(\sqrt{d})$, where it becomes
    isomorphic to an $A_{n-1}$-singularity.
    Here, $\beta(n)$ is equal to $n/2$ if $n$ is even
    and equal to $(n-1)/2$ if $n$ is odd.
    \item If $\cha k\neq2$ and $d\in k^\times\setminus k^{\times2}$, then 
    $$
    Y^2\,=\,(X^2-4dZ^n)Z
    $$
    defines a $C_{n+1}$-singularity.
    It splits over $k(\sqrt{d})$, where it becomes 
    isomorphic to a $D_{n+2}$-singularity.
    \item If $\cha k\neq2$ and $d\in k^\times\setminus k^{\times2}$, then
    $$
    X^2-4dY^4 \,=\,Z^3
    $$
    defines a $F_4$-singularity.
    It splits over $k(\sqrt{d})$, where it becomes
    isomorphic to an $E_6$-singularity.
    \item Assume $p:=\cha k\neq2$ and let $L/k$ be the splitting field of
    an irreducible cubic $t^3+at+b$.
    Let $\Delta=-4a-27b^2$ be its discriminant. 
    Then,
    $$
    \begin{array}{ll}
    2aZ^2 \,=\, bX(\Delta X^2-9Y^2) + Y(\Delta X^2-Y^2) & \mbox{ if }p\neq3 \\
    Z^2 \,=\, bX^3 + aX^2Y+ Y^3 & \mbox{ if }p=3
    \end{array}
    $$
    defines a singularity of type $G_2$, see also
    \S\ref{subsec:S3twist} for simpler equations 
    in special cases.
    It splits over $L$, where it becomes isomorphic to a
    $D_4$-singularity.
\end{itemize}
\end{Theorem}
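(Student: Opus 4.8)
The plan is to exploit the fact that, after base change to $\overline k$, the group scheme $G$ becomes one of the finitely many standard finite linearly reductive subgroup schemes of $\SL_{2,\overline k}$, so that $\Aff^2_k/G$ is a $k$-form of a fixed rational double point over $\overline k$; one then classifies these forms and computes their defining equations one Dynkin type at a time.

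First I would record the general setup: since $G$ is linearly reductive, $\Aff^2_k/G=\Spec k[x,y]^G$ is a normal affine surface and its formation commutes with $k\subset\overline k$, while by the classification over $\overline k$ (Hashimoto \cite{Hashimoto}; see also \cite[\S4]{LS}) the group scheme $G_{\overline k}$ is, up to conjugacy, one of $\mu_n$, the binary dihedral, binary tetrahedral, binary octahedral, or binary icosahedral group scheme, with quotient singularities of type $A_{n-1}$, $D_{n+2}$, $E_6$, $E_7$, $E_8$ respectively. Thus the datum of $G$ together with its tautological $2$-dimensional representation is a $k$-form of the corresponding standard datum, classified by $H^1(k,\underline{\Aut})$, where $\underline{\Aut}$ is the group scheme of automorphisms of $G_{\overline k}$ realized by conjugation inside $\GL_{2,\overline k}$ --- equivalently the image of the normalizer $\Norm_{\PGL_2}(G_{\overline k})$ in $\Aut(G_{\overline k})$. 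I would then compute $\underline{\Aut}$ and its first cohomology case by case: for $\mu_n$ it is the order-$2$ group generated by the image of the Weyl element, acting by inversion, so $H^1(k,\underline{\Aut})=k^\times/k^{\times2}$ (the parameter $d$) when $p\neq2$ and an Artin--Schreier group when $p=2$; for the binary dihedral scheme with $n\geq3$ it is again $\ZZ/2$, producing $C_{n+1}$; for the binary dihedral scheme attached to $D_4$ it is $\Sym_3$, whose quadratic classes give $C_3$ and whose $\Sym_3$-torsors --- recorded by a separable cubic with splitting field $L$ --- give $G_2$; for the binary tetrahedral scheme it is $\ZZ/2$, producing $F_4$; and the binary octahedral and binary icosahedral schemes admit no new forms, only the split types $E_7$ and $E_8$ (equivalently, the automorphism group scheme of the $E_7$ or $E_8$ rational double point is $\GG_m$, whose $H^1$ vanishes). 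The characteristic restrictions in the table are exactly the conditions for the relevant standard group scheme to exist over $\overline k$.

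Next I would compute the invariant rings and pin down the types. For a split form, $k[x,y]^G$ is the analogue of Klein's classical computation, valid over any field of admissible characteristic, and yields the first block of the table. For a non-split form, writing $G$ as the twist of a standard $G_0$ by the cocycle found above, I would compute $k[x,y]^G$ as the invariants of $\overline k[x,y]^{G_{0,\overline k}}$ under the twisted Galois action, producing explicit generators over $k$ and their single relation, which are the remaining entries. To prove the ``More precisely'' list it then suffices, for each displayed equation $f=0$, to exhibit an explicit linear change of coordinates over the splitting field $E$ (namely $k(\sqrt d)$, respectively $L$) transforming $f$ into the standard equation of type $A_{n-1}$, $D_{n+2}$, $E_6$, respectively $D_4$; this is a short polynomial identity and proves that the geometric type is as asserted. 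Finally I would verify non-splitness: the exceptional components of the minimal resolution, which form the relevant ADE diagram over $\overline E$, are permuted by $\Gal(E/k)$ through the nontrivial diagram automorphism attached to the cocycle (concretely, there are two end components defined over $\overline E$ but conjugate over $k$), so the $k$-type $\Gamma$ is the corresponding folded diagram: $A_{n-1}/(\ZZ/2)=B_{\beta(n)}$, $D_{n+2}/(\ZZ/2)=C_{n+1}$, $E_6/(\ZZ/2)=F_4$, $D_4/(\ZZ/2)=C_3$, and $D_4/(\ZZ/3\text{ or }\Sym_3)=G_2$.

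The hard part will be the wildly ramified cases --- $p=2$ for the $B$-series and $p=3$ for $G_2$ --- where the relevant automorphism datum has order divisible by $p$, the group schemes $G$ fail to be étale, and neither ordinary Galois descent nor the ``fold the ADE diagram'' heuristic is available; there I expect one must compute the invariant rings essentially by hand, which is what forces the exceptional equations $dX^2+XY+Y^2=Z^n$ and $Z^2=bX^3+aX^2Y+Y^3$, and then one must argue separately both that these singularities have the claimed types and that no forms have been overlooked. A secondary technical point, needed throughout, will be to certify that each form is genuinely non-trivial --- for instance by recovering the class of $d$ in $k^\times/k^{\times2}$ (or the class of the cubic) from an intrinsic invariant of the singularity such as the local class group of its resolution --- so that the folded types $B$, $C$, $F$, $G$ really occur rather than secretly splitting over $k$.
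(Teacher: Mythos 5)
Your overall route is the one the paper takes: reduce to $\overline{k}$-forms of the standard subgroup schemes via Hashimoto's classification, control the forms by cohomology of a normalizer, observe that only the induced Galois homomorphism into the diagram automorphisms affects the invariant ring, compute $k[x,y]^G$ by explicit Galois descent of the standard invariants, and read off the $k$-type from the Galois action on the exceptional curves (Lipman). Two points in your plan are genuinely off, though. First, the classifying object: the image of $\Norm_{\PGL_2}(G_{\overline{k}})$ in $\Aut(G_{\overline{k}})$ is \emph{not} $\ZZ/2$ (resp.\ $\Sym_3$, $\ZZ/2$) in the dihedral, $D_4$ and tetrahedral cases --- it contains all inner automorphisms (for $Q_8$ it is $\Sym_4$, not $\Sym_3$). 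The group that is canonically $\Aut(\Gamma)$ is $\Norm_{\SL_2}(G)/G$, and the nontrivial inputs are (i) that this quotient is a constant group scheme isomorphic to $\Aut(\Gamma)$ (Theorem \ref{thm:NGmodGAutMcKay}), which is what turns a cocycle into an honest homomorphism $\Gal(\overline{k}/k)\to\Aut(\Gamma)$ cutting out the splitting field $L$, and (ii) that one works with $\Norm_{\SL_2}$ rather than $\Norm_{\PGL_2}$: the vanishing of $H^1(\Gal(\overline{k}/k),\SL_2(\overline{k}))$ guarantees both that every class is realized by an actual subgroup scheme of $\SL_{2,k}$ and that the twisted semilinear action can be taken on $\overline{k}[x,y]$ itself; a $\PGL_2$-valued class with nontrivial Brauer image does not act linearly on $\Aff^2_k$, so your $H^1(k,\underline{\Aut})$ neither matches the set of subgroup schemes nor directly supplies the descent datum needed for the invariant computation.

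Second, your assessment of $p=2$ (type $B$) and $p=3$ (type $G_2$) as ``wild'' cases where ordinary Galois descent and diagram folding are unavailable is mistaken, and since you defer exactly these cases to an unspecified hand computation, your plan leaves those table entries without an argument. The twisting datum factors through the \'etale constant quotient $\Norm_{\SL_2}(G)/G\cong\Aut(\Gamma)$ regardless of whether $G$ itself is \'etale, so the same descent applies verbatim: in characteristic $2$ the quadratic twist of $\bmu_n$ is by an Artin--Schreier extension $k(\wp^{-1}(d))$ and yields $dA^2+AB+B^2=C^n$, and in characteristic $3$ the cubic/$\Sym_3$ twist of $\BD_2$ is again ordinary Galois descent (the order-$3$ normalizer element is merely unipotent instead of diagonalizable), giving $Z^2=bX^3+aX^2Y+Y^3$; the identification of the folded types $B_{\beta(n)}$ and $G_2$ still proceeds via the Galois action on the geometric exceptional curves as in Lipman. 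Separately, you take for granted that the standard embedded subgroup schemes (and hence the split equations in the first block) exist over every admissible $k$; this is not automatic even in characteristic zero (Schur-index issues for $Q_8$, the $\sqrt{2},\sqrt{5}$ entries for the type $E$ embeddings) and is a theorem in its own right in the paper (Theorem \ref{thm:gp-sch-over-k-main-features}).
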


We would like to stress that this list is complete:
RDP singularities do exist also  in other characteristics than
covered in the above theorem, but then, they do not arise as
quotient singularities by linearly reductive group schemes,
see also \cite[Section 9]{RDP}.

\subsection{Descending the classical subgroups and their representations}
Let $k$ be a field and let $\overline{k}$ be 
an algebraic closure.
To prove Theorem \ref{tm:twisted-invariants}, our first step is to 
descend the finite subgroups of $\SL_{2,\overline{k}}$, which are 
well-known, to subgroup schemes over $k$.
Moreover, we would like to descend all representations over $\overline{k}$ to $k$ as well. 
We emphasize that \emph{even in characteristic zero}, it is not
\emph{a priori} clear how to do this. 
Here are two illustrative examples:
\begin{itemize}
\item If one wishes to descend all of the classical 
finite subgroups of $\SL_2(\CC)$ to a number field $k$, one's first inkling  is to simply consider the finite constant groups. 
This approach fails since we are not simply interested in descending the groups, 
but rather their \emph{embeddings} into $\SL_2$. 
For example, for constant groups of type E (namely the binary tetrahedral,
binary octahedral, and binary icosahedral group), 
the usual embeddings into $\SL_2(\CC)$ involve matrices
with entries $\sqrt{2}$ and $\sqrt{5}$. 
As a result, it is not clear how to descend the embeddings when $k$ does not contain such square roots.
\item The classical group of type $D_4$, 
namely $\BD_2(\CC)$ is isomorphic to $Q_8$ 
(the quaternion group of order 8) and it
embeds into $\SL_2(\CC)$ via its unique $2$-dimensional 
irreducible representation $\rho$. 
However, $\rho$ does not even descend to $\RR$ -- 
rather $\rho^{\oplus 2}$ descends, which is related
to Schur indices.
\end{itemize}
In both cases, one is forced to consider group schemes 
rather than constant groups.
The following result shows that the desired descent
is always possible.

\begin{Theorem}\label{thm:gp-sch-over-k-main-features}
Let $k$ be a field of characteristic $p\geq0$
and let $\overline{k}$ be an algebraic closure. 
Then, there exist finite linearly reductive subgroup schemes of $\SL_{2,k}$
\begin{equation}
    \label{eqn:ADE-over-perfect-k}
       \begin{array}{c|ccccc}
       &\bmu_n& \BD_n& \BT& \BO&\BI\\
       \hline
       p&&\neq2&\neq2,3&\neq2,3&\neq2,3,5
       \end{array},
\end{equation}
where the second row gives the restriction on $p$, and
that have the following properties:
\begin{enumerate}
 \item\label{item-irrep} 
 If $G$ is from \eqref{eqn:ADE-over-perfect-k}, then every 
 representation of $G_{\overline{k}}$ descends to $k$.
 \item\label{item-conjugation} Up to conjugation, every finite 
 linearly reductive subgroup scheme of $\SL_{2,\overline{k}}$ 
 is of the form $G_{\overline{k}}$ for some 
 $G\subset\SL_{2,k}$ from \eqref{eqn:ADE-over-perfect-k}.
\end{enumerate}
\end{Theorem}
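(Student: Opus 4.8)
The plan is to build each group scheme in \eqref{eqn:ADE-over-perfect-k} by hand as a subgroup scheme of $\SL_{2,k}$ defined over the prime field (or over $\ZZ$, reducing mod $p$), so that property \eqref{item-conjugation} becomes a base-change statement and property \eqref{item-irrep} is checked directly on character-theoretic or structural grounds. For the cyclic series $\bmu_n$ there is nothing to do: the diagonal embedding $\bmu_n\into\SL_{2,k}$, $\zeta\mapsto\diag(\zeta,\zeta^{-1})$, is already defined over the prime field, it is linearly reductive since $\bmu_n$ is (when $p\nmid n$, or as a diagonalizable group in general), and every representation of $\bmu_{n,\overline k}$ is a sum of characters $\zeta\mapsto\zeta^i$, each of which is manifestly defined over the prime field. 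So the content is in the binary dihedral $\BD_n$ and the three exceptional group schemes $\BT,\BO,\BI$.

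For $\BD_n$ I would take the subgroup scheme of $\SL_{2,k}$ generated by $\bmu_{2n}$ (embedded diagonally as above) together with the matrix $w=\left(\begin{smallmatrix}0&1\\-1&0\end{smallmatrix}\right)$; this is defined over the prime field, sits in an extension $1\to\bmu_{2n}\to\BD_n\to\ZZ/2\to 1$, and is linearly reductive for $p\neq 2$ (the order of the \'etale quotient is invertible and $\bmu_{2n}$ is linearly reductive). For the exceptional cases the key point — and the place where group schemes rather than constant groups are essential, as the two bullet examples in the introduction show — is that the binary tetrahedral, octahedral, icosahedral groups are, over $\overline{\QQ}$ or over $\overline{\FF}_p$ (with $p$ as restricted), extensions of a constant group $G/Z$ by the \emph{central} $\bmu_2$, and one can realize these extensions together with the two-dimensional embedding in a way insensitive to which square roots lie in $k$ by descending the corresponding projective representation into $\PGL_{2,k}\cong\mathbf{SO}_{3,k}$ (or $\mathbf{PSL}_{2,k}$) — where $\mathfrak A_4,\mathfrak S_4,\mathfrak A_5$ act on a split form of the standard $3$-dimensional representation over the prime field — and then pulling back along $\SL_{2,k}\to\PGL_{2,k}$. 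Concretely I would write down an explicit subgroup scheme of $\SL_{2,\ZZ[1/N]}$ ($N$ the relevant small number) whose geometric points give the classical group, check linear reductivity after reduction mod $p$, and invoke the classification (Hashimoto \cite{Hashimoto}, or \cite[\S4]{LS}) over $\overline k$ to get \eqref{item-conjugation}: every finite linearly reductive subgroup scheme of $\SL_{2,\overline k}$ is conjugate to one on the known list, and that list is exactly $\{\bmu_n,\BD_n,\BT,\BO,\BI\}$ base-changed to $\overline k$.

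For \eqref{item-irrep}: a representation of $G_{\overline k}$ descends to $k$ as soon as its character is $k$-valued and its Schur index over $k$ is $1$. When $G$ is linearly reductive its representation theory over $\overline k$ is semisimple and, via the lift to characteristic zero (or the constant-group/diagonalizable structure of $G$), matches that of the classical complex group; I would argue that for each $G$ on the list every complex irreducible representation is realizable over $\QQ(\chi)$ with trivial Schur index — this is classical for $\mathfrak A_4,\mathfrak S_4,\mathfrak A_5$ and their binary covers, the one subtlety being the $2$-dimensional faithful representation, which for $\BD_2=Q_8$ has Schur index $2$ over $\RR$; here one uses that the \emph{embedding} $G\into\SL_{2,k}$ has already been arranged over the prime field, and that once one base-field is large enough to contain the embedding, all other irreducibles (being built from it and from characters) descend too. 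The main obstacle I anticipate is precisely this interaction between Schur indices and the faithful $2$-dimensional representation in the $\BD_n$ and $\BO$ cases: one must exhibit the embedding over a base where the obstructing quaternion algebra splits — which is why working with a \emph{group scheme} (allowing $\bmu_{2n}$ in place of a cyclic constant group) rather than a constant group resolves the issue — and then deduce that all representations, not just the faithful one, are defined over $k$ by reducing to characters of the already-descended $G$ and of the diagonalizable part $\bmu_{2n}$ or $\bmu_n$.
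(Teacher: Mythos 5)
Your construction of the exceptional group schemes does not get off the ground. You propose to realize $\BT$, $\BO$, $\BI$ by embedding the \emph{constant} groups $\Alt_4$, $\Sym_4$, $\Alt_5$ into the split $\mathbf{SO}_{3,k}\cong\PGL_{2,k}$ over the prime field and pulling back along $\SL_{2,k}\to\PGL_{2,k}$. No such embeddings exist over $\QQ$ (nor over any $k$ in which $-1$ is not a sum of two squares, e.g.\ $\RR$): a finite subgroup of $\PGL_2(\QQ)$ acts faithfully on the $3$-dimensional adjoint representation preserving the split (isotropic) ternary form, while averaging a rational positive definite form shows it also preserves a definite one; since the faithful $3$-dimensional representations of $\Alt_4$, $\Sym_4$, $\Alt_5$ are absolutely irreducible, the invariant quadratic form is unique up to a scalar, and it cannot be both isotropic and definite. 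For $\Alt_5$ the situation is even worse: its $3$-dimensional representations are only defined over $\QQ(\sqrt5)$, as their characters involve $\sqrt5$. So there is no "explicit subgroup scheme of $\SL_{2,\ZZ[1/N]}$ whose geometric points give the classical group"; one is forced to use genuinely non-constant twisted forms, which is exactly what the paper builds: $\BO:=\Norm_{\SL_{2,k}}(\BD_2)$, $\BT$ as a further quadratic twist of the kernel of $\BO\to\Sym_3\to\ZZ/2$, and $\BI$ by descending $\widetilde{\rho}$ through the identity $\sigma\widetilde{\rho}=\widetilde{\rho}\tau$ with $\tau$ an outer automorphism (Propositions \ref{prop:BD2-1d-irreps}, \ref{prop:def-BT}, \ref{prop:BI-descends}). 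Without this, the existence part of the theorem is unproved for types $E_6,E_7,E_8$.

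Two further gaps. First, your binary dihedral group scheme (generated by the diagonal $\bmu_{2n}$ and $w$) is the paper's $\BD_n^*$, and for odd $n$ it violates property (\ref{item-irrep}) when $\sqrt{-1}\notin k$: the two order-$4$ characters of the binary dihedral group, given over $k(i)$ by $(a,b)\mapsto a^n\pm(ib)^n$, are interchanged by $\Gal(k(i)/k)$, so neither is isomorphic to its Galois conjugate and neither descends. The paper must replace $\BD_n^*$ by its quadratic twist $\BD_n$ (Galois acting through conjugation by $\diag(i,-i)$), for which these characters become Galois-stable; your proposal never addresses this. Second, your descent criterion "character $k$-valued and Schur index $1$" is a statement about constant groups; for the non-constant group schemes that actually occur, the right condition is that a representation be isomorphic to its Galois conjugate twisted by the descent cocycle (this is precisely the content of $\sigma\widetilde{\rho}=\widetilde{\rho}\tau$ for $\BI$, and of the computation above for odd $\BD_n$), and the assertion that "all other irreducibles, being built from the embedding and from characters, descend too" is not automatic: the paper proves a summand-descent statement (Lemma \ref{l:summand-over-k-vs-kbar}, Corollary \ref{cor:descend-quotient-rep}) and then descends the remaining irreducibles of $\BT$, $\BO$, $\BI$ one by one via explicit tensor-product decompositions. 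Property (\ref{item-conjugation}) via Hashimoto's classification is the same step as in the paper, but it only helps once group schemes with property (\ref{item-irrep}) have actually been constructed.
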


Note that we do not make any uniqueness assumptions - 
we will show in a later article uniqueness up to conjugacy and 
up to inner twists.

The associated quotient singularities $\Aff^2/G$ are precisely
the classical forms of the RDP singularities.
In particular, all of them can be realized as 
quotient singularities
by finite linearly reductive subgroup schemes of $\SL_2$.
To the best of our knowledge, no one has previously written down 
such group schemes for type $E$, even in characteristic zero.

\subsection{McKay graphs}
Let $G$ be a finite linearly reductive group scheme over a 
field $k$ and let $\rho:G\to\GL_{n,k}$ be a representation.
In this article, this will always be an embedding $G\to\SL_{2,k}\subset\GL_{2,k}$.
We let $\{\psi_1,...,\psi_s\}$ be the irreducible
representations of $G$ over $k$ up to isomorphism. 
Associated to this data, the \emph{McKay graph}
$\widetilde{\Gamma}(G,\rho)$ is defined to be graph, whose 
\begin{itemize}
    \item vertices correspond to the $\{\psi_1,...,\psi_s\}$, and 
    \item there are $a_{ij}$ edges from $\psi_i$ to $\psi_j$,
    where
    $$
      a_{ij} \,:=\,\dim_k \Hom^G(\psi_i,\rho\otimes\psi_j).
    $$
\end{itemize}
We let $\Gamma(G,\rho)$ denote the graph that is obtained
by removing the vertex corresponding to the trivial representation
from the McKay graph $\widetilde{\Gamma}(G,\rho)$.
The McKay graph was originally defined by McKay \cite{McKay},
but see also \cite[Section 3]{LieMcKay} for the McKay graphs
of finite linearly reductive subgroup schemes of $\SL_{2,k}$.

If $G$ is from \eqref{eqn:ADE-over-perfect-k} in 
Theorem \ref{thm:gp-sch-over-k-main-features}
and if $\rho$ denotes the given embedding of $G$ into 
$\SL_{2,k}\subset\GL_{2,k}$, then
we have 
$$
\widetilde{\Gamma}(G,\rho)=\widetilde{\Gamma}
(G_{\overline{k}},\rho_{\overline{k}})
\mbox{ \quad and \quad }
\Gamma(G,\rho)=\Gamma(G_{\overline{k}},\rho_{\overline{k}})
$$
by Theorem \ref{thm:gp-sch-over-k-main-features}, Property (1).
The graph $\widetilde{\Gamma}(G,\rho)$ (resp. $\Gamma(G,\rho)$)
is an affine (resp. finite) Dynkin diagram of type 
$ADE$ and the associated quotient singularity
$x\in X=\Aff^2_k/G$ is an RDP
singularity of type $\Gamma(x\in X)$
equal to $\Gamma(G,\rho)$ - this is part of
the \emph{McKay correspondence} \cite{McKay}, but see 
also \cite{IN} and especially \cite{BlumeMcKay, LieMcKay}
in the context of nonclosed ground fields or
positive characteristic.

Moreover, still assuming that $G$ is as in
\eqref{eqn:ADE-over-perfect-k},
we can
consider the normalizer subgroup scheme
$\Norm_{\SL_{2,k}}(G)$ in $\SL_{2,k}$.
The next theorem shows that these give naturally rise
to constant groups that capture the automorphism groups
of the graphs $\Gamma(G,\rho)$.

\begin{Theorem}\label{thm:NGmodGAutMcKay}
    Let $k$ be a field, 
    with an algebraic closure $\overline{k}$
    and let $G\subset\SL_{2,k}$ be as in 
    \eqref{eqn:ADE-over-perfect-k} of
    Theorem \ref{thm:gp-sch-over-k-main-features}. 
    Then, $\Norm_{\SL_{2,k}}(G)/G$ is a constant group scheme 
    and we have a natural isomorphism 
     $$
        \Norm_{\SL_{2,k}}(G)/G
        \quad\xrightarrow{\simeq}\quad
        \Aut\left(\Gamma(G_{\overline{k}},\rho_{\overline{k}})\right).
     $$ 
\end{Theorem}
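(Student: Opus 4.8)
The plan is to work one Dynkin type at a time, using Theorem \ref{thm:gp-sch-over-k-main-features}(1) to reduce everything to a purely combinatorial/representation-theoretic statement over $\overline{k}$, and then descend. First I would observe that there is a \emph{natural} map $\Norm_{\SL_{2,k}}(G)/G \to \Aut(\Gamma(G_{\overline{k}},\rho_{\overline{k}}))$: if $g\in\Norm_{\SL_{2,k}}(G)(R)$ for a $k$-algebra $R$, then conjugation by $g$ is an automorphism of $G_R$, hence permutes the isomorphism classes of irreducible representations of $G_{\overline{k}}$ (these are defined over $k$ by Property (1), and conjugation is compatible with base change), and since $g\in\SL_{2,k}$ the representation $\rho$ is fixed up to isomorphism, so this permutation preserves the numbers $a_{ij}=\dim_k\Hom^G(\psi_i,\rho\otimes\psi_j)$; thus it is a graph automorphism of $\widetilde{\Gamma}$ fixing the trivial vertex, i.e. an element of $\Aut(\Gamma)$. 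Inner conjugation by $G$ itself acts trivially on isomorphism classes, so the map factors through the quotient; and the quotient is étale (hence, being finite over a field, a constant group scheme after passing to $\overline{k}$, and in fact already constant over $k$ since the target $\Aut(\Gamma)$ is constant and the map will turn out to be an isomorphism) because $\Norm$ and $G$ are both linearly reductive of the same dimension $0$ with $G$ normal, so $\Norm/G$ is finite linearly reductive over $k$ with a faithful map to a constant group, forcing it to be étale; perfectness of $k$ is not even needed here, but it is convenient for the next step.

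Next I would prove the map is an isomorphism after base change to $\overline{k}$, which by faithfully flat descent suffices. Over $\overline{k}$ this becomes the statement $\Norm_{\SL_{2,\overline{k}}}(G_{\overline{k}})/G_{\overline{k}} \xrightarrow{\simeq} \Aut(\Gamma(G_{\overline{k}},\rho_{\overline{k}}))$, which I expect is already in the literature (e.g.\ \cite{Slodowy}, or can be extracted from \cite{LieMcKay}) for the classical subgroups of $\SL_2(\mathbb C)$ and their infinitesimal/linearly reductive analogues; but if one wants a self-contained argument, one checks it case by case against \eqref{eqn:ADE-over-perfect-k}. For $\bmu_n$ (type $A_{n-1}$): $\Norm_{\SL_2}(\bmu_n)$ is generated by the diagonal torus containing $\bmu_n$ together with the antidiagonal Weyl element $w=\begin{pmatrix}0&1\\-1&0\end{pmatrix}$, which inverts the character; so $\Norm/G\cong\mathbb Z/2$ for $n\geq 3$ (trivial for $n\leq 2$), matching $\Aut(A_{n-1})$. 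For $\BD_n$ (type $D_{n+2}$): $\Norm_{\SL_2}(\BD_n)/\BD_n\cong\mathbb Z/2$ generated by a suitable extra binary-dihedral element or torus element, matching $\Aut(D_{n+2})=\mathbb Z/2$ for $n\geq 3$ and $\Aut(D_4)=\Sym_3$ in the exceptional $n=2$ case — here $\Norm_{\SL_2}(\BD_2)/\BD_2$ acquires the extra order-$3$ symmetry because $\BD_2$ (the $Q_8$-analogue) has its three nontrivial one-dimensional characters permuted transitively; I would verify this by exhibiting the order-$3$ normalizing element explicitly (it exists because $\BD_2$ sits inside $\BO$, and $\BO/\BD_2$ supplies the extra automorphisms). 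For $\BO$ (type $E_7$) and $\BI$ (type $E_8$): these have no nontrivial graph automorphisms ($\Aut(E_7)=\Aut(E_8)=1$), and correspondingly $\Norm_{\SL_2}(G)=G\cdot\Cent_{\SL_2}(G)=G$ (since the centralizer is just $\{\pm 1\}\subset G$), so the map is an isomorphism of trivial groups. For $\BT$ (type $E_6$): $\Aut(E_6)=\mathbb Z/2$, realized by the outer automorphism of $\BT$ coming from $\BO$, i.e.\ $\Norm_{\SL_2}(\BT)/\BT\cong\BO/\BT\cong\mathbb Z/2$.

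The main obstacle I anticipate is the $D_4$ case, where one must produce the order-$3$ element of $\Norm_{\SL_{2,k}}(\BD_2)$ \emph{over the given perfect field $k$} (not just over $\overline{k}$) and check that conjugation by it cyclically permutes the three outer legs of the $D_4$ diagram; this is exactly the triality phenomenon, and it is the one place where the naive choice of descent of $\BD_2$ matters — one wants the descent from Theorem \ref{thm:gp-sch-over-k-main-features} that is visibly normalized by a cyclic order-$3$ subgroup of $\SL_{2,k}$ (equivalently, one arising inside the chosen descent of $\BT$ or $\BO$). A secondary subtlety is confirming that $\Norm_{\SL_{2,k}}(G)/G$ is genuinely a constant group scheme over $k$ and not merely étale: this follows once we know the map to the constant group $\Aut(\Gamma)$ is an isomorphism, but to avoid circularity one can instead argue directly that $\Norm_{\SL_{2,k}}(G)$ is smooth (it is an extension of a finite étale group by $G$, which is linearly reductive, hence its formation commutes with base change and its component group is constant over $k$ because $k$ is perfect and the relevant finite groups — $\mathbb Z/2$, $\Sym_3$, or trivial — are all automatically constant). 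Assembling these per-type verifications and invoking descent completes the proof.
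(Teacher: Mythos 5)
Your overall architecture (construct the natural map, kill $G$ in the kernel, check constancy, then verify the isomorphism case by case over $\overline{k}$) is the same as the paper's, but several steps as written are wrong or unsupported, and they are exactly the steps that carry the content. First, your type $A$ case is false: $\Norm_{\SL_2}(\bmu_n)$ is the full group of monomial matrices (Proposition \ref{prop:NGComputation}(\ref{Nmun-computation})), so $\Norm_{\SL_2}(\bmu_n)/\bmu_n$ contains $\GG_m/\bmu_n\cong\GG_m$ and is positive-dimensional, not $\ZZ/2$; you list the diagonal torus as a generator of the normalizer and then forget it when forming the quotient. (The paper's proof of Theorem \ref{thm:NGmodGAutMcKay-more-general} silently omits $\bmu_n$, and \S5.1 concedes that this quotient is not constant, so the statement must be read as excluding type $A$; a correct treatment has to notice this rather than ``prove'' the excluded case.) Second, your constancy argument over $k$ does not work. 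The normalizer need not be smooth (for $p\mid n$ the group schemes $\BD_n^*\subset\BD_{2n}^*$ are non-reduced), and, more importantly, étaleness of $\Norm/G$ does not give constancy: a form of $\Sym_3$ is \emph{not} automatically constant, since $\Aut(\Sym_3)\neq 1$ and $H^1(\Gal_k,\Sym_3)$ is generally nontrivial — only the $\ZZ/2$ cases are rigid, which is the argument the paper actually uses for $\BT^*$, $\BD_{2n+1}$ and $\BT$. For the crucial $D_4$ case your proposed fix — exhibiting an order-$3$ element of $\Norm_{\SL_{2,k}}(\BD_2)$ over $k$ — is generally impossible (such elements involve $\zeta_8$ and $\sqrt{2}$, and the extension $1\to\BD_2\to\BT\to\bmu_3\to1$ need not split over $k$), and it is also unnecessary: the paper gets constancy of $\BO/\BD_2\cong\Sym_3$ from the fact that the three characters $\psi^\pm,\psi^0$ of Proposition \ref{prop:BD2-1d-irreps} are defined over $k$, so the permutation homomorphism $\BO\to\Sym_3$ is a $k$-morphism, surjective with kernel $\BD_2$.

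Two further gaps: for $\BO$ and $\BI$ you assert $\Norm_{\SL_2}(G)=G\cdot\Cent_{\SL_2}(G)$, but the parenthetical only computes the centralizer; the substantive point is that no outer automorphism of $G(\overline{k})$ is realized by $\SL_2$-conjugation, which the paper proves by showing the induced automorphism preserves $\BD_2$ (for $\BO$) or preserves conjugacy classes and is therefore inner by \cite{BM14} (for $\BI$). A correct shortcut would be to note that conjugation fixes the class of $\rho$ while the outer automorphisms swap the two faithful $2$-dimensional irreducibles, but you would have to say this. Finally, you never distinguish the twisted groups $\BD_{2n+1}$ and $\BT$ from $\BD_n^*$ and $\BT^*$ (the theorem is about the former), and you lean on perfectness of $k$, which is not in the hypotheses; the paper handles the twisted cases by observing that $G\subset\Norm(G)$ is a Galois twist of $G^*\subset\Norm(G^*)$ and that $\ZZ/2$ has no nontrivial forms. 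The case-by-case matching over $\overline{k}$ in your second paragraph is fine in outline, but as it stands the proposal does not establish the theorem's main new content, namely constancy and the isomorphism over the ground field.
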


Theorem \ref{thm:NGmodGAutMcKay}, when combined with Theorem \ref{thm:NG-and-twists} below, 
plays a central role in the proof of 
Theorem \ref{tm:twisted-invariants}:
if $k$ is perfect, then it allow us to bound the degree of the Galois extension $L/k$
over which our twisted RDP singularities become classical forms
of RDP singularities. 
This limits the Galois extensions we need to consider 
when computing twists of the classical forms of the 
RDP singularities.

\subsection{Twisted subgroup schemes}
We end the introduction by mentioning a result,
which we need to establish Theorem \ref{thm:NGmodGAutMcKay} and
which seems to be known to the experts - we learnt it from a discussion
with Brian Conrad, we do not claim much originality, but we were unable to find 
it in the literature.
Namely, we show that twists of a group scheme $G$ in a group scheme $X$ 
are classified in terms of $\Norm_G(X)$-torsors.

\begin{Theorem}\label{thm:NG-and-twists}
    Let $k$ be a field, 
    let $k\subseteq K$ be a field extension,
    let $X$ be an affine and
    finitely presented group scheme over $k$, let $G\subset X$ 
    be a finitely presented closed subgroup scheme over $k$, 
    and let $\Norm_X(G)$ 
    be the normalizer subgroup scheme of $G\subset X$.
    Then, there exists natural bijections of the following sets: 
    \begin{enumerate}
    \item $X(k)$-conjugacy classes of closed finitely presented $k$-subgroup schemes $X$
    of $X$ such that $H_K$ is conjugate to $G_K$ over $X_K$.
    \item Isomorphism classes of left $\Norm_X(G)$-torsors 
    over $k$ that admit an equivariant closed immersion 
    into $X$ and that split over $K$.
    \end{enumerate}
\end{Theorem}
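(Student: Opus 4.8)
The plan is to set up a bijection by sending a subgroup scheme $H \subseteq X$ (in the $X(k)$-conjugacy class as in (1)) to the functor $\underline{T}_H$ that assigns to a $k$-algebra $R$ the set of isomorphisms $\phi : G_R \xrightarrow{\sim} H_R$ of closed subgroup schemes of $X_R$ that are induced by conjugation by an element of $X(R)$, i.e.\ those $\phi$ of the form $\phi = c_g|_{G_R}$ for some $g \in X(R)$ with $g G_R g^{-1} = H_R$. First I would check that $\underline{T}_H$ is nonempty fppf-locally: by hypothesis $H_K$ is conjugate to $G_K$ over $X_K$, and by a standard limit/spreading-out argument (using that $X$, $G$, $H$ are finitely presented) the conjugating element can be found over a finitely presented $k$-subalgebra of $K$, hence over some fppf cover of $\Spec k$. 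Then $\Norm_X(G)$ acts on $\underline{T}_H$ on the right by precomposition with inner automorphisms $c_h$ ($h \in \Norm_X(G)(R)$), and this action is simply transitive fppf-locally because two elements $g, g'$ of $X(R)$ with $g G_R g^{-1} = g' G_R g'^{-1} = H_R$ differ by $g^{-1} g' \in \Norm_X(G)(R)$, and conversely; so $\underline{T}_H$ is a left $\Norm_X(G)$-torsor (writing the action on the left via $h \cdot \phi := \phi \circ c_h^{-1}$, or keeping the bookkeeping consistent with the paper's convention). The evaluation map $\underline{T}_H \to X$, $\phi = c_g|_{G_R} \mapsto g$, is not quite well-defined as stated (the stabilizer is $\Cent_X(G)$, not trivial), so instead I would realize the equivariant closed immersion $\underline{T}_H \hookrightarrow X$ by observing that $\underline{T}_H$ is naturally a subsheaf of the transporter $\mathrm{Transp}_X(G,H) = \{g \in X : gGg^{-1} = H\}$, which is a closed subscheme of $X$ that is a torsor under $\Norm_X(G)$ acting by right translation; in fact $\underline{T}_H$ \emph{is} this transporter, and the equivariant closed immersion into $X$ is the tautological one. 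Finally, $\underline{T}_H$ splits over $K$ precisely because $H_K$ is $X(K)$-conjugate to $G_K$, giving a $K$-point.

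Conversely, given a left $\Norm_X(G)$-torsor $P$ over $k$ equipped with an equivariant closed immersion $\iota : P \hookrightarrow X$ that splits over $K$, I would define $H := \iota(P) \cdot G \cdot \iota(P)^{-1}$ more carefully as follows: the composite $P \times^{\Norm_X(G)} G \to X$, using $\iota$ and the inclusion $G \subseteq X$ and multiplication in $X$, has image a closed subscheme, and since $G$ is normalized by $\Norm_X(G)$ the twist $P \times^{\Norm_X(G)} G$ carries a group structure making this map a closed immersion of group schemes; its image is the desired $H$. One checks $H_K$ is conjugate to $G_K$ using the $K$-point of $P$, and the $X(k)$-conjugacy class of $H$ is independent of choices since changing $\iota$ by an $X(k)$-conjugation (the only ambiguity, up to the torsor structure) changes $H$ by the same conjugation. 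Then I would verify that the two constructions are mutually inverse: starting from $H$, forming $\underline{T}_H$ (the transporter), and twisting $G$ by it recovers $H$ because $\mathrm{Transp}_X(G,H) \times^{\Norm_X(G)} G \cong H$ via multiplication; and starting from $(P,\iota)$, the transporter $\mathrm{Transp}_X(G,H)$ for $H = P \times^{\Norm_X(G)} G$ is canonically identified with $P$ via $\iota$. Throughout I would use that linear reductivity is not needed here — only finite presentation and affineness — and that torsors under affine finitely presented group schemes are automatically representable by affine schemes (descent), so "torsor" and "affine scheme with group action" agree.

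The main obstacle I anticipate is \textbf{bookkeeping of the stabilizer $\Cent_X(G)$ and the left/right torsor conventions}: the naive map $X \supseteq \{g : gGg^{-1} = H\} \to \mathrm{Isom}$ has fibers $\Cent_X(G)$, so one must be disciplined about whether the torsor is the transporter scheme itself (a \emph{right} $\Norm_X(G)$-torsor under right multiplication, which one then turns into a left torsor using the inverse, matching the paper's "left $\Norm_X(G)$-torsors" in (2)) or the isom-sheaf; these differ by the quotient $\Norm_X(G)/\Cent_X(G) = \InnScheme_X(G)$-worth of data, and conflating them produces a genuinely wrong statement. A secondary technical point is the spreading-out step establishing fppf-local triviality of the transporter — this requires the finite-presentation hypotheses and a limit argument à la EGA IV, which I would cite rather than reprove. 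Once the conventions are pinned down, the equivalence is formal: it is essentially the statement that forms of $G$ inside $X$ (in the étale-or-fppf topology, restricted to those becoming trivial over $K$) are classified by $H^1$ of the automorphism-inside-$X$ sheaf, which is exactly $\Norm_X(G)$ acting by conjugation — the content is just identifying that automorphism sheaf correctly and checking the torsors so obtained are precisely those admitting the equivariant closed immersion into $X$.
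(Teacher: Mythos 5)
Your construction is, in substance, the paper's proof: in one direction you attach to $H$ the transporter scheme (the paper uses $\{x\in X : xH x^{-1}=G\}$, which differs from your $\{g : gGg^{-1}=H\}$ only by the inversion exchanging the left/right torsor conventions you already flag), a closed subscheme of $X$ (the paper cites \cite[Proposition 2.1.2]{Conrad} for this) that is an equivariantly embedded $\Norm_X(G)$-torsor splitting over $K$, and in the other direction you twist $G$ by the torsor via the contracted product $(G\times_k E)/\Norm_X(G)\hookrightarrow (X\times_k E)/\Norm_X(G)\cong X$, exactly as in the paper; your added care about fppf-local triviality by spreading out and about the $X(k)$-conjugation ambiguity of the equivariant embedding is correct and fills in points the paper leaves implicit. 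One sentence should be excised, though: the sheaf of conjugation-induced isomorphisms $G_R\xrightarrow{\sim}H_R$ is \emph{not} the transporter (the transporter maps onto it with fibres torsors under $\Cent_X(G)$, so it is a quotient, not a subsheaf, and is only a torsor under $\Norm_X(G)/\Cent_X(G)$); since everything after that point in your argument in fact uses the transporter itself, simply define the torsor that way from the start, as the paper does.
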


In particular, if $k$ is a perfect field and
$G\subset\SL_{2,k}$ is as in \eqref{eqn:ADE-over-perfect-k}
of Theorem \ref{thm:gp-sch-over-k-main-features},
then the previous theorem shows that finite linearly
reductive subgroup schemes $G'\subset\SL_{2,k}$ that
become conjugate to $G\subseteq\SL_{2,k}$ over $\overline{k}$
are classified by the pointed Galois cohomology set
$$
H^1\left(\Gal(\overline{k}/k),\,\Norm_{\SL_{2,k}}(G)\right),
$$
see Corollary \ref{cor: brian} and Remark \ref{rem: brian}.

\begin{Remark}
\label{rem: classification}
This yields a cohomological description of
\emph{all} finite linearly reductive subgroup schemes
of $\SL_{2,k}$ by Theorem \ref{thm:gp-sch-over-k-main-features}, Property (2).
\end{Remark}

\subsection{Outlook}
In work in preparation, we will give a complete and detailed 
description of finite linearly reductive subgroup schemes
of $\SL_{2,k}$ and the associated quotient singularities
over arbitrary ground fields.
This will be more of a monography.

\begin{VoidRoman}[Acknowledgements]
 We thank Jason Bell, Dori Bejleri, Colin Ingalls, 
 Brett Nasserden, and Paul Pollack for helpful conversations. 
 We especially thank Brian Conrad for his help 
 in proving Theorem \ref{thm:NG-and-twists}.
\end{VoidRoman}

\tableofcontents

\section{Classifying twisted forms using the normalizer:~Theorem \ref{thm:NG-and-twists}}

In this section, we prove Theorem \ref{thm:NG-and-twists}. 
Recall the normalizer subgroup scheme 
of a group scheme $G$ in a group scheme $X$ 
is given by the functor
$$
\Norm_X(G)(S) \,:\,=
\left\{x\in X(S) \,\mid\, \forall T\xrightarrow{f} S,\, g\in G(T),\, 
f^*(x)g f^*(x)^{-1}\in G(T)
\right\}.
$$
We refer to \cite[Section 2.1]{Conrad} for details, including representability
and general properties of $\Norm_X(G)$.
In particular, if $G$ is a closed subgroup scheme of
an affine group scheme $X$ over $k$, then
$\Norm_X(G)$ will be a closed subgroup scheme of $X$
and thus, it is an affine group scheme over $k$.

\begin{proof}[{Proof of Theorem \ref{thm:NG-and-twists}}]

$(1)\Rightarrow(2)$: 
Let $H$ be a finitely presented closed $k$-subgroup scheme of $X$. Then by \cite[Proposition 2.1.2]{Conrad}, the functor 
$$
S\,\mapsto\, \{x\in X(S)| x\cdot H_S\cdot x^{-1}=G_S\}.
$$
is representable by a closed subscheme $E_H$ of $X$. 
It is also a left $\Norm_X(G)$-torsor and since we have
$E_H(K)\neq\emptyset$ by our assumptions, this $\Norm_X(G)$-torsor  splits over $K$.
Furthermore, the embedding $E_H\subset X$ 
is $\Norm_X(G)$-equivariant.

$(2)\Rightarrow(1)$: 
Let $E$ be a left $\Norm_X(G)$-torsor over $k$ that splits over $K$.
Since $E(K)\neq\emptyset$, we may pick an element
$t\in E(K)$.
Next, we note that $E(K)=\Norm_X(G)\cdot K$, which shows that
$t^{-1}G_Kt\subset X_K$ is independent of the choice of element
$t\in E(K)$.
Then, we define
$$
H_E \,:=\, 
(G\times_kE)/\Norm_X(G) \,\to\, 
(X\times_kE)/\Norm_X(G)\,=\,X,
$$
where the action of $\Norm_X(G)$ on $G\times_k E$ and $X\times_kE$
is diagonal via conjugation on the first factor and the 
torsor structure on the second factor.
This way, we obtain a subgroup scheme $H_E\subseteq X$ over $k$ 
that becomes $t^{-1}G_Kt\subseteq X_K$ over $K$.


It is straightforward to check that the constructions
$H\mapsto E_H$ and $E\mapsto H_E$ are inverse to each other.
\end{proof}

If $k$ is a perfect field with algebraic closure $\overline{k}$, then
we denote its absolute Galois group by $\Gal_k:=\Gal(\overline{k}/k)$.

\begin{Corollary}
 \label{cor: brian}
    We keep the assumptions and notations of Theorem \ref{thm:NG-and-twists}.
    Assume moreover that the field $k$ is perfect. 
    Then, there exists a natural bijection of pointed 
    sets between
    $$
    \ker\left(  
    H^1(\Gal_k,\, \Norm_X(G)(\overline{k})) \,\to\,
    H^1(\Gal_k,\, X(\overline{k}))\right)
    $$
    and $X(k)$-conjugacy classes of closed $k$-subgroup schemes $H$
    of $X$ such that $H_{\bar{k}}$ is conjugate to $G_{\bar{k}}$ over 
    $X_{\bar{k}}$.
\end{Corollary}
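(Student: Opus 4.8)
The plan is to deduce Corollary \ref{cor: brian} from Theorem \ref{thm:NG-and-twists} by showing that the geometric condition appearing in part (2) of that theorem --- namely, that a $\Norm_X(G)$-torsor $E$ over $k$ admits a $\Norm_X(G)$-equivariant closed immersion into $X$ --- is precisely the condition that the class of $E$ in $H^1(\Gal_k, \Norm_X(G)(\overline{k}))$ dies in $H^1(\Gal_k, X(\overline{k}))$. First I would recall the standard dictionary: since $k$ is perfect (hence $\Gal_k$ acts on $\overline{k}$-points in the usual way) and $X$, $\Norm_X(G)$ are affine of finite type, isomorphism classes of left $\Norm_X(G)$-torsors over $k$ that split over $\overline{k}$ are classified by $H^1(\Gal_k, \Norm_X(G)(\overline{k}))$; because our torsors are finitely presented subschemes of the affine scheme $X$, they are automatically affine and split over $\overline{k}$, so every torsor in sight is captured by this $H^1$. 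This reduces the corollary to an identification of subsets on the torsor side.

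The heart of the argument is then the following claim: a left $\Norm_X(G)$-torsor $E$ admits a $\Norm_X(G)$-equivariant closed immersion $E \hookrightarrow X$ if and only if its class maps to the trivial class in $H^1(\Gal_k, X(\overline{k}))$. For the ``only if'' direction, an equivariant immersion $E \hookrightarrow X$ exhibits $E$, after pushing the structure group out along $\Norm_X(G) \hookrightarrow X$, as a sub-$X$-torsor of $X$ acting on itself; but the pushout $E \times^{\Norm_X(G)} X$ is then a trivial $X$-torsor (it has the identity section coming from $X$ itself), so the image of $[E]$ in $H^1(\Gal_k, X(\overline{k}))$ is trivial. For the ``if'' direction, if $[E]$ pushes to the trivial class, then $E \times^{\Norm_X(G)} X \cong X$ as $X$-torsors; composing the natural map $E \to E \times^{\Norm_X(G)} X$ with this isomorphism gives a $\Norm_X(G)$-equivariant morphism $E \to X$, and one checks it is a closed immersion --- here one uses that $\Norm_X(G) \hookrightarrow X$ is a closed immersion (it is, being the normalizer of a closed subgroup scheme in an affine group scheme, as recalled in the paragraph before the proof of Theorem \ref{thm:NG-and-twists}) together with faithfully flat descent of the closed-immersion property along $\Spec\overline{k} \to \Spec k$, since $E_{\overline{k}} \to X_{\overline{k}}$ is the translate of the closed immersion $\Norm_X(G)_{\overline{k}} \hookrightarrow X_{\overline{k}}$ by an element of $X(\overline{k})$.

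Having established this equivalence, the corollary follows by combining it with Theorem \ref{thm:NG-and-twists}: that theorem's bijection between set (1) --- $X(k)$-conjugacy classes of closed finitely presented $k$-subgroup schemes $H$ with $H_{\overline{k}}$ conjugate to $G_{\overline{k}}$ --- and set (2) --- isomorphism classes of $\Norm_X(G)$-torsors admitting an equivariant closed immersion into $X$ and splitting over $\overline{k}$ --- restricts, under the cohomological dictionary, to a bijection with $\ker(H^1(\Gal_k, \Norm_X(G)(\overline{k})) \to H^1(\Gal_k, X(\overline{k})))$. One should also note that over a perfect field any closed subgroup scheme of a finite-type affine group scheme is automatically finitely presented, so the finite-presentation hypothesis in set (1) is harmless and one recovers the statement as phrased. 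Finally, the trivial torsor corresponds on one side to $G$ itself and on the other to the distinguished base class, so the bijection is one of \emph{pointed} sets, as asserted.

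I expect the main obstacle to be the ``if'' direction of the central claim --- verifying that the equivariant morphism $E \to X$ produced from a trivialization of the pushout torsor is genuinely a closed immersion and not merely a monomorphism. The clean way around this is the faithfully-flat-descent argument: closed immersions are fpqc-local on the base, $E$ and $X$ are of finite type, and after base change to $\overline{k}$ the map becomes an $X(\overline{k})$-translate of the closed immersion $\Norm_X(G)_{\overline{k}} \hookrightarrow X_{\overline{k}}$, which is closed; care is needed only to check that the equivariant map we wrote down does base-change to exactly that translate, which is a matter of unwinding the pushout construction and the chosen trivialization. A secondary bookkeeping point is to confirm that the natural bijections of Theorem \ref{thm:NG-and-twists} and the standard $H^1$-classification of torsors are compatible in the sense that they fit into the required commutative diagram --- this is routine but should be stated carefully so the reader sees that the same torsor is being described on both sides.
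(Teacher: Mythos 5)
Your proposal is correct and takes essentially the same route as the paper: the paper's proof likewise rests on the observation that a $\Norm_X(G)$-torsor admits an equivariant closed immersion into $X$ exactly when its pushout along $\Norm_X(G)\hookrightarrow X$ is trivial, i.e.\ when its class lies in the kernel of $H^1(\Gal_k,\Norm_X(G)(\overline{k}))\to H^1(\Gal_k,X(\overline{k}))$, and then invokes Theorem \ref{thm:NG-and-twists}. The only (cosmetic) difference is that the paper phrases the cohomological dictionary at finite Galois levels $K/k$ and passes to the limit, while you work directly over $\overline{k}$ and supply the details (pushout triviality, descent of the closed-immersion property) that the paper leaves implicit.
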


\begin{proof}
Let $k\subseteq K$ be a finite Galois extension.
We note that a left $\Norm_X(G)$-torsor $E$ over $k$
that splits over $K$
admits an equivariant closed immersion 
into $X$ if and only if its pushout to $X$ is trivial
and thus, if and only if it lies in the kernel
of the map
$H^1(\Gal(K/k),\Norm_X(G)(K))\to H^1(\Gal(K/k),X(K))$.

Passing to the limit over all finite subextensions of $k$ 
inside $\bar{k}$, 
we obtain a bijection between the set of isomorphism classes of 
$\Norm_X(G)$-torsors over $k$ that admit an equivariant closed 
immersion into X (as all such split over some finite extension of $k$)
and the set of $X(k)$-conjugacy classes of closed 
$k$-subgroup schemes $H$ of $X$ such 
that is $H_{\bar{k}}$ is conjugate to $G_{\bar{k}}$
over $X_{\bar{k}}$.
\end{proof}

\begin{Remark}
\label{rem: brian}
 This corollary becomes particularly useful in the case where 
 $H^1(\Gal_k,X(\overline{k}))$ is trivial.
 We note that there is a vast literature on Galois cohomology
 of linear algebraic groups, see, for example,
 \cite{SerreGaloisCohomology}.
 For example, it is well-known that we have
 $$
 H^1\left(\Gal_k, \GL_{n}(\overline{k})\right) \,=\,\{\ast\}
 \mbox{ \qquad for all }n\geq1,
 $$ 
 see, for example, \cite[Chapter III.\S1, Lemma 1]{SerreGaloisCohomology}.
 Taking Galois cohomology
 in the short exact sequence
 $$
 1 \,\to\,\SL_{n,k}\,\to\,\GL_{n,k}\,
 \stackrel{\det}{\longrightarrow} 
 \,\GG_{m,k}\,\to\,1,
 $$
 we find
 $H^1(\Gal_k, \SL_{n}(\overline{k})) =\{\ast\}$
 for all $n\geq2$, which we will need below.
\end{Remark}

\section{Twisted forms of classical groups:~Theorem \ref{thm:gp-sch-over-k-main-features}}

In this section, we prove Theorem \ref{thm:gp-sch-over-k-main-features}. 
In \S\ref{subsec:def-our-gp-schs}, we construct the group schemes over $k$ 
that appear in Theorem \ref{thm:gp-sch-over-k-main-features}, and in
\S\ref{subsec:irreps-decsend}, we prove that the irreducible representations of 
the classical groups descend to irreducible representations of our 
group schemes over $k$.

\subsection{Definitions and properties of our group schemes over $k$}\label{subsec:def-our-gp-schs}

We first give the definitions of the group schemes that appear in 
Theorem \ref{thm:gp-sch-over-k-main-features}. 
The definitions of $\BT$, $\BO$, and $\BI$ require justification which is given 
after statement of the definitions.
Let $k$ be a field.

\begin{enumerate}
    \item \textbf{Roots of unity.} Recall the definition of the group scheme
    $$
    \bmu_n \,:=\, \Spec k[u]/(u^n-1).
    $$
    We have a closed embedding
    $$
    \rho\colon\bmu_n\to\SL_{2,k},\quad u\mapsto 
    \begin{bmatrix}
    u&0\\
    0&u^{-1}
    \end{bmatrix}.
    $$

\item\textbf{Binary dihedral group scheme.}  
Assume $\cha k\neq2$.
In a previous paper \cite[Section 4]{LS}, we considered the group scheme
$$
\BD_n^* \,=\,\Spec k[a,b]/(ab,a^{2n}+b^{2n}-1).
$$
We have a closed embedding
$$
 \rho\,\colon\,\BD_n^*\,\to\,\SL_{2,k} \,=\,
 \Spec k[a,b,c,d]/(ad-bc-1)
$$
defined by the ideal
 $$
c+b^{2n-1},\quad d-a^{2n-1},\quad ab.
$$
We define the binary dihedral group scheme $\BD_n$ to be the 
following twist of $\BD_n^*$. 
If $i:=\sqrt{-1}\in k$, we define $\BD_n=\BD_n^*$. 
If $i\notin k$, we define $\BD_n$ as follows. 
Note that the diagonal matrix $\diag(i,-i)$ normalizes 
$\BD_{n,k(i)}^*$, hence letting $\Gal(k(i)/k)$ act on $\BD_{n,k(i)}^*$ 
via conjugation by this matrix, 
Theorem \ref{thm:NG-and-twists} yields a closed embedding
$$
\BD_n \,\to\, \SL_{2,k}.
$$
This subgroup scheme is conjugate to $\BD_n^*$ over $k(i)$. 
Explicitly, conjugation by $\diag(i,-i)$ acts as 
$a,b\mapsto a,-b$ and taking the invariant ring under $\Gal(k(i)/k)$ shows
$$
\BD_n \,\cong\, \Spec k[a,b]/(ab,a^{2n}+(-1)^nb^{2n}-1).
$$
In particular, we have $\BD_{2n}^*=\BD_{2n}$ as subgroup schemes of $\SL_{2,k}$.

\item\textbf{Binary octahedral group scheme.} 
Assume $\cha k\neq2$.
We define the binary octahedral group scheme to be the normalizer
$$
\BO \,:=\,\Norm_{\SL_{2,k}}(\BD_2).
$$

\item\textbf{Binary tetrahedral group scheme.} 
We next define the binary tetrahedral group scheme $\BT$ whenever $\cha k\neq2,3$; however, we first define an auxiliary group scheme $\BT^*$ whenever $\cha k\neq2$. Proposition \ref{prop:BD2-1d-irreps} below will show that the action of 
$\BO$ on $\BD_2$ permutes three specific representations 
$\BD_2\to\bmu_2\subset\GL_{1,k}$. 
Hence, we obtain maps
$$
\BO \,\to\, \Sym_3 \,\xrightarrow{\textrm{sign}}\,\ZZ/2,   
$$
where $\Sym_3$ denotes the symmetric group on 3 letters.
We define $\BT^*$ to be the kernel of this composition. 

Now assume $\cha k\neq2,3$. If $k$ contains all $3$.rd roots of unity, we define $\BT=\BT^*$. Otherwise, let $\zeta_{24}\in\overline{k}$ be a primitive $24$.th root of unity; note $\Gal(k(\zeta_{24})/k)\simeq\Gal(k(\zeta_{24})/k(\zeta_3))\times\Gal(k(\zeta_{24})/k(\zeta_8))$. Consider the Galois action on $\BT^*_{k(\zeta_{24})}$,
where $\Gal(k(\zeta_{24})/k(\zeta_3))$ acts trivially and the generator of $\Gal(k(\zeta_{24})/k(\zeta_8))$ acts via conjugation 
by the matrix 
$$\begin{bmatrix}
        0 & \zeta_8\\
        -\zeta_8^{-1} & 0
    \end{bmatrix} \,\in\, \SL_2\left(k(\zeta_{24})\right).
$$    
Proposition \ref{prop:BD2-1d-irreps} will show that this matrix 
normalizes $\BT^*_{k(\zeta_{24})}$. 
Letting $\BT$ be the quotient $\BT^*_{k(\zeta_{24})}$ by $\Gal(k(\zeta_{24})/k)$, 
Theorem \ref{thm:NG-and-twists} tells us $\BT$ is a subgroup scheme 
of $\SL_{2,k}$ and is conjugate to $\BT^*$ over $k(\zeta_{24})$.

\item\textbf{Binary icosahedral group scheme.} 
We next define the binary tetrahedral group scheme whenever 
$\cha k\neq\{2,3,5\}$. 
The classical binary icosahedral group has a natural faithful 
$2$-dimensional irreducible representation $\widetilde{\rho}$ to 
$\SL_{2,k(\zeta_5)}$, which we describe later. 
There is an outer automorphism $\tau$ of the binary icosahedral group 
and we prove in Proposition \ref{prop:BI-descends} 
that if $\eta$ is a generator of $\Gal(k(\zeta_5)/k)$, then 
$$
    \eta\widetilde{\rho} \,=\,\widetilde{\rho}\tau^m,
$$
where $m$ depends on whether $\zeta_5$ or $\zeta_5+\zeta_5^{-1}$ is in $k$. 
Thus, we obtain a Galois action on the classical binary icosahedral group 
where $\sigma$ acts via $\tau^m$ and this action is compatible with 
the embedding $\widetilde{\rho}$. 
Hence, $\widetilde{\rho}$ descends to a faithful representation
$$
\rho \,\colon\,\BI\,\to\,\SL_{2,k}
$$
of a group scheme which we call the binary icosahedral group scheme $\BI$.
\end{enumerate}

We begin by proving some properties of $\BO$.

\begin{Lemma}\label{l:BO-etale}
    If $k$ is a field with $\cha k\neq2$, then $\BD_2$ and $\BO$ are \'etale over $k$. Furthermore, $\BO_{\overline{k}}$ is the classical binary octahedral group.
\end{Lemma}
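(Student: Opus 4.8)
The plan is to settle $\BD_2$ first, and then bootstrap from it to $\BO=\Norm_{\SL_{2,k}}(\BD_2)$ by passing to $\overline{k}$.

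\emph{Step 1: $\BD_2$ is \'etale over $k$, and $\BD_{2,\overline{k}}$ is $Q_8$.} By construction $\BD_2=\BD_2^{*}=\Spec R$ with $R=k[a,b]/(ab,\,a^4+b^4-1)$. From $ab=0$ and $a^4+b^4=1$ one gets $a^{5}=a$ and $b^{5}=b$, so $R$ is spanned over $k$ by $1,a,a^2,a^3,a^4,b,b^2,b^3,b^4$ subject to $a^4+b^4=1$; hence $\dim_k R\le 8$. On the other hand $\Spec(R\otimes_k\overline{k})$ consists of the eight distinct closed points $(a,0)$ with $a^4=1$ and $(0,b)$ with $b^4=1$, so $\dim_{\overline{k}}(R\otimes_k\overline{k})\ge 8$. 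Therefore this dimension equals $8$, which is the number of closed points, so $R\otimes_k\overline{k}\cong\overline{k}^{\,8}$ is reduced and $\BD_2$ is finite \'etale over $k$ of degree $8$ (this also follows from \cite[Section~4]{LS}). In particular $\BD_{2,\overline{k}}$ is the constant group scheme on a group of order $8$ carrying a faithful irreducible $2$-dimensional representation over $\overline{k}$, i.e.\ the classical binary dihedral group $Q_8$.

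\emph{Step 2: $\BO$ is \'etale over $k$.} Being a closed subgroup scheme of $\SL_{2,k}$, $\BO$ is affine and of finite type over $k$; to prove it is \'etale over $k$ it suffices to show its identity component $\BO^{0}$ is trivial, and since the formation of the identity component commutes with field extension, it suffices to show $(\BO_{\overline{k}})^{0}=1$. Formation of the normalizer commutes with base change \cite[Section~2.1]{Conrad}, so $\BO_{\overline{k}}=\Norm_{\SL_{2,\overline{k}}}(\BD_{2,\overline{k}})=\Norm_{\SL_{2,\overline{k}}}(Q_8)$. Conjugation makes this group scheme act on the finite constant scheme $Q_8$; a connected $\overline{k}$-group scheme acting on a finite constant scheme acts trivially (it must fix each of the finitely many $\overline{k}$-points), so $(\BO_{\overline{k}})^{0}$ is contained in $\Cent_{\SL_{2,\overline{k}}}(Q_8)$. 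Now the representation of $Q_8$ on $\overline{k}^{2}$ is absolutely irreducible, so its commutant over every $\overline{k}$-algebra consists of scalars; hence $\Cent_{\SL_{2,\overline{k}}}(Q_8)$ is the center $\bmu_2$ of $\SL_{2,\overline{k}}$, which is \'etale because $\cha k\neq 2$. Thus $\Cent_{\SL_{2,\overline{k}}}(Q_8)^{0}=1$, so $(\BO_{\overline{k}})^{0}=1$, and $\BO_{\overline{k}}$ --- and therefore $\BO$ --- is finite \'etale.

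\emph{Step 3: $\BO_{\overline{k}}$ is the binary octahedral group; the main point.} Being finite \'etale over $\overline{k}$, $\BO_{\overline{k}}$ is the constant group scheme on the abstract group $\Norm_{\SL_2(\overline{k})}(Q_8)$. Conjugation yields an injection $\Norm_{\SL_2(\overline{k})}(Q_8)/\{\pm I\}\hookrightarrow\Aut(Q_8)\cong\Sym_4$, which is onto because $Q_8$ has, up to isomorphism, a unique faithful $2$-dimensional representation over $\overline{k}$, and it is absolutely irreducible, so every automorphism of $Q_8$ is induced by conjugation by a determinant-one matrix. Hence $\Norm_{\SL_2(\overline{k})}(Q_8)$ has order $48$ and is the classical binary octahedral group (cf.\ \cite[Chapter~6]{Slodowy}). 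The one place that needs care is the positive-characteristic fact that a finite group scheme need not be reduced; the argument is arranged so that this is disposed of by the single observation that the scheme-theoretic centralizer of $Q_8$ in $\SL_{2,\overline{k}}$ equals $\bmu_2$, which is \'etale precisely because $\cha k\neq 2$ --- everything else is the explicit description of $\BD_2$ together with a classical computation inside $\SL_2(\overline{k})$.
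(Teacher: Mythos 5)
Your proof is correct, but it reaches the conclusion by a genuinely different route than the paper. For $\BD_2$ the paper checks $\Omega^1_{\BD_2/k}=0$ directly, while you count the eight geometric points against a dimension bound; both are fine. The real difference is the treatment of $\BO$: the paper proves the \'etale property and the identification of $\BO_{\overline{k}}$ in one stroke, by taking an arbitrary $A\in\BO(R)$ and multiplying it by explicit elements of the classical binary octahedral group until it becomes diagonal, concluding that every $R$-point is induced by a $k$-point of that explicit matrix group. You instead dispose of a possible infinitesimal part structurally --- $(\BO_{\overline{k}})^{0}$ acts trivially on the constant scheme $Q_8$, hence lies in the scheme-theoretic centralizer, which is $\bmu_2$ by absolute irreducibility (Burnside), and $\bmu_2$ is \'etale since $\cha k\neq2$ --- and then compute the group of $\overline{k}$-points via $\Norm_{\SL_2(\overline{k})}(Q_8)/\{\pm I\}\hookrightarrow\Aut(Q_8)\cong\Sym_4$, surjective because the faithful $2$-dimensional representation of $Q_8$ is unique and irreducible, so the normalizer has order $48$. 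This is cleaner and more conceptual; what the paper's computation buys in exchange is the explicit list of matrices in $\BO(\overline{k})$, which is reused in Lemma \ref{l:BO-basics}, so with your argument that description would still have to be extracted separately. One step you should spell out is the very last one: ``order $48$ with central quotient $\Sym_4$'' does not by itself single out the binary octahedral group (for instance $\ZZ/2\times\Sym_4$ and $\GL_2(\FF_3)$ are also central extensions of $\Sym_4$ by $\ZZ/2$), and the appeal to \cite{Slodowy} is a characteristic-zero classification, whereas the lemma also covers $\cha k=3$, where $3$ divides $48$ and the prime-to-$p$ classification of finite subgroups of $\SL_2$ cannot be quoted verbatim. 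The one-line fix, valid in every odd characteristic, is that your group sits inside $\SL_2(\overline{k})$ and therefore has $-I$ as its unique involution, and among the central extensions of $\Sym_4$ by $\ZZ/2$ only the binary octahedral group has a unique involution. (The same remark tightens your identification of $\BD_{2,\overline{k}}$ with $Q_8$: the dihedral group $D_4$ also has a faithful irreducible $2$-dimensional representation, but it has several involutions and hence does not embed into $\SL_2$.)
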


\begin{proof}
We can show $\BD_2$ directly from the definition by proving $\Omega^1_{\BD_2/k}=0$. 
To see this, we note $adb+bda = d(ab)=0$ and so, $b^4da = 0$. 
Also, $a^3da + b^3db = 0$, so $a^4da = 0$ and as
a result, $da = (a^4+b^4)da = 0$. 
Similarly, we find $db=0$.

To check $\BO$ is \'etale, by fpqc descent, it suffices to 
base change to $\overline{k}$, so we may assume $k$ is 
algebraically closed. 
Then we see from the definition of $\BD_2$ that it is isomorphic to the quaternion group $Q_8$ embedded into $\SL_2$ 
via its unique irreducible $2$-dimensional representation $\psi$. 
So, we are reduced to proving $\BO:=\Norm_{\SL_2}(Q_8)$ over the algebraically closed field $k$ is the constant group given by the classical binary octahedral group. Let $R$ be a $k$-algebra and let $A\in\BO(R)$. For any $C\in\SL_2(k)$, we let $C_R\in\SL_2(R)$ be the induced $R$-point. Let $\zeta_8\in\overline{k}$ be a primitive $8$.th root 
of unity. 
Since conjugation by 
$$
B:=\frac{1}{\sqrt{2}}
\begin{bmatrix}
    \zeta_8 &\zeta_8\\
    -\zeta_8^{-1} &\zeta_8^{-1}
\end{bmatrix}\in\BO(\overline{k})
$$
cyclically permutes $i,j,k\in Q_8$, multiplying $A$ by an appropriate power of $B_R$, we may assume $A\cdot \psi(i)_R\cdot A^{-1}=\psi(\pm i)_R$. 
Conjugation by $j$ sends $i$ to $-i$, so multiplying $A$ by $\psi(j)_R$ if necessary, we may additionally 
assume $A\cdot\psi(i)_R\cdot A^{-1}=\psi(i)_R$. 
Since $\psi(i)_R=\diag(i,-i)$, a direct computation 
shows $A=\diag(\lambda,\lambda^{-1})\in\GG_m(R)$. 
Multiplying $A$ by $\psi(i)_R$, we can assume 
$A\cdot\psi(j)_R\cdot A^{-1}\in\{\psi(j)_R,\psi(k)_R\}$, and so $\lambda=\pm1$ or $\lambda=\pm\zeta_8$. Thus, $A$ is induced by a $k$-point and lives in the classical binary octahedral group.
\end{proof}

\begin{Lemma}\label{l:BO-basics}
Let $k$ be a field with $\cha k\neq2$, let $S$ be a $k$-scheme, 
and let $A:=\begin{bmatrix}
    a & b\\ c&d
\end{bmatrix}\in\BO(S)$. 
Then, one of the following holds:
\begin{enumerate}
    \item\label{l:BO-basics::ad-1} $ad=1$, $b=c=0$, and $a^8=1$.
    \item\label{l:BO-basics::bc-1} $bc=-1$, $a=d=0$, and $b^8=1$.
    \item\label{l:BO-basics::nonBD4} $ad=-bc=\frac{1}{2}$, $a^4=b^4=:\frac{\epsilon}{4}$, $(ab)^2=:\frac{\eta}{4}$, $(ac)^2=(bc)^2=\frac{\eta\epsilon}{4}$, where $\epsilon,\eta=\pm1$.
\end{enumerate}
\end{Lemma}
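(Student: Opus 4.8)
The strategy is to leverage the concrete description of $\BO$ that emerges from the proof of Lemma \ref{l:BO-etale}. Over $\overline{k}$ we know $\BD_2 \cong Q_8$ embedded via its $2$-dimensional irreducible representation $\psi$, so that $\psi(i) = \diag(i,-i)$, and moreover $\BO_{\overline{k}}$ is the classical binary octahedral group, which has order $48$ and contains $\BD_{2,\overline{k}}$ as a normal subgroup of index $6$. Since $\BO$ is \'etale over $k$, an $S$-point $A \in \BO(S)$ is, \'etale-locally on $S$, one of these $48$ explicit matrices with entries in $\overline{k}$; the content of the lemma is simply to write out the three conjugacy-type patterns these matrices fall into and to observe that the stated polynomial relations among $a,b,c,d$ hold for each.

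First I would fix the explicit list of $48$ matrices. The $8$ elements of $\BD_2 \cong Q_8$ are $\diag(\pm1,\pm1)$ together with $\pm\psi(j)$, $\pm\psi(k)$, where $\psi(j) = \begin{bmatrix} 0 & 1\\ -1 & 0\end{bmatrix}$ and $\psi(k) = \begin{bmatrix} 0 & i \\ i & 0 \end{bmatrix}$ (up to the normalization chosen in \cite{LS}); these are exactly case \eqref{l:BO-basics::ad-1} (the diagonal ones, with $a^2 = \pm1$ hence $a^8 = 1$ — in fact $a^4 = 1$, but $a^8=1$ suffices) and case \eqref{l:BO-basics::bc-1} (the anti-diagonal ones, with $b^2 = \pm1, \pm i^2$, hence $b^8 = 1$). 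The remaining $40 = 48 - 8$ elements lie in the nontrivial cosets of $\BD_2$ in $\BO$; a representative is the matrix $B = \frac{1}{\sqrt2}\begin{bmatrix}\zeta_8 & \zeta_8 \\ -\zeta_8^{-1} & \zeta_8^{-1}\end{bmatrix}$ from Lemma \ref{l:BO-etale}, and the general element of $\BO \setminus \BD_2$ is obtained by multiplying $B^{\pm1}$, $B^2$, $B^2 B^{\mp 1}$ (representatives of the five nontrivial cosets) by the eight elements of $Q_8$. For each such product one computes $a,b,c,d$ and checks: $ad = -bc = \tfrac12$, that $a^4 = b^4$ equals $\pm\tfrac14$ (call it $\tfrac{\epsilon}{4}$), that $(ab)^2 = \pm\tfrac14 =: \tfrac{\eta}{4}$, and then $(ac)^2 = (bc)^2 = \tfrac{\eta\epsilon}{4}$ follows formally from $bc = -\tfrac12$ and $a^4 = b^4$ since $(ac)^2 = a^2 c^2 = a^4 \cdot (c^2/a^2)$ and $bc = -1/2$ forces $c = -1/(2b)$, so $(bc)^2 = 1/4 \cdot \ldots$ — these are the routine identifications I would not grind through in full.

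To organize the case distinction cleanly rather than checking $40$ matrices individually, I would instead argue structurally: given $A \in \BO(S)$, \'etale-locally $A$ normalizes $\BD_2$, so conjugation by $A$ induces an automorphism of $Q_8$, i.e.\ an element of $\Aut(Q_8) \cong \Sym_4$. If this automorphism is inner (coming from $Q_8$ itself), then $A$ lies in $\BD_2 \cdot Z$ where $Z$ is the centralizer; one computes the centralizer of $Q_8$ in $\SL_2$ to be $\bmu_2$ (just $\pm I$), giving $A \in \BD_2$ and hence cases \eqref{l:BO-basics::ad-1} or \eqref{l:BO-basics::bc-1}, exactly as in the last paragraph of the proof of Lemma \ref{l:BO-etale}. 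Otherwise $A$ induces an automorphism cyclically permuting $\{i,j,k\}$ (possibly with sign changes), and here one uses that $\psi(i),\psi(j),\psi(k)$ have a common property — each has trace $0$ and the three of them, with $A \psi(i) A^{-1}$ being a nonzero scalar multiple of $\psi(\sigma(i))$ for the induced permutation $\sigma$ — to pin down the entries: write $A = \begin{bmatrix} a & b \\ c & d\end{bmatrix}$ with $ad - bc = 1$, impose $A \diag(i,-i) A^{-1} = \lambda \psi(j)$ or $\lambda \psi(k)$ for a scalar $\lambda$, solve, and extract the relations $ad = -bc = \tfrac12$, $a^4 = b^4$, etc.

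The main obstacle I anticipate is bookkeeping: getting the normalization of $\psi$ (and hence of $\BD_2 \subset \SL_{2,k}$) exactly consistent with \cite[Section 4]{LS}, so that the constants $\tfrac14$, $\tfrac12$, and the signs $\epsilon,\eta$ come out precisely as stated rather than off by a unit. There is no conceptual difficulty — everything is a finite explicit computation over $\overline{k}$ combined with the already-established facts that $\BD_2$ and $\BO$ are \'etale with the expected geometric fibers — but one must be careful that the polynomial relations are stated as \emph{identities in the coordinate ring of $\BO$} (equivalently, hold after \'etale base change for every $S$-point), which is exactly what \'etaleness plus the check on $\overline{k}$-points delivers.
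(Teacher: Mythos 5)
Your overall reduction is exactly the paper's: use Lemma \ref{l:BO-etale} to reduce, via \'etaleness and fpqc descent, to $\overline{k}$-points, and then verify the relations on the finitely many elements of the classical binary octahedral group. However, the case analysis you then set up is organized around the wrong subgroup and, taken literally, is false. You assign cases (1) and (2) to the eight elements of $\BD_2\cong Q_8$ and claim that all $40$ remaining elements of $\BO(\overline{k})$ satisfy $ad=-bc=\tfrac12$, etc. This fails for the eight elements of $\BD_4(\overline{k})\setminus\BD_2(\overline{k})$: for instance $\diag(\zeta_8,\zeta_8^{-1})\in\BO(\overline{k})$ lies outside $\BD_2$, yet it is diagonal with $ad=1$, $b=c=0$ and $a^4=-1$, so it belongs to case (1) --- this is precisely why the lemma asserts $a^8=1$ rather than $a^4=1$ --- and it certainly does not satisfy $ad=\tfrac12$. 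The correct split, which is the one the paper uses, is along $\BD_4$: the $16$ elements of $\BD_4(\overline{k})$ account for cases (1) and (2), and the remaining $32$ elements $\tfrac{1}{\sqrt2}\left[\begin{smallmatrix}\zeta^i&\zeta^j\\-\zeta^{-j}&\zeta^{-i}\end{smallmatrix}\right]$ with $\zeta\in\bmu_8(\overline{k})$ and $i\equiv j\pmod 4$ account for case (3). Relatedly, your proposed coset representatives do not enumerate $\BO\setminus\BD_2$: the matrix $B$ maps to a $3$-cycle in $\BO/\BD_2\cong\Sym_3$, so its powers only reach the two nontrivial cosets inside $\BT^*$ and miss the three transposition cosets entirely, one of which is $\BD_4\setminus\BD_2$.

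The same slip undermines your structural variant. It is not true that if conjugation by $A$ induces a non-inner automorphism of $Q_8$ then it cyclically permutes the classes of $i,j,k$: the outer automorphism group is $\Sym_3$, and the elements of $\BO(\overline{k})$ inducing the transposition that fixes the class of $i$ are exactly those of $\BD_4\setminus\BD_2$, which are diagonal or antidiagonal and fall under cases (1)/(2) with $a^4=-1$, resp.\ $b^4=-1$, while the other two transpositions (and the $3$-cycles) are induced by matrices of case (3). So the dichotomy ``inner $\Rightarrow$ (1)/(2), non-inner $\Rightarrow$ (3)'' does not hold; the trichotomy in the lemma is by the shape of the matrix (diagonal, antidiagonal, neither), not by the permutation type in $\Sym_3$. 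Once you replace $\BD_2$ by $\BD_4$ in the bookkeeping, the remaining verification is the routine finite check and the argument goes through, coinciding with the paper's proof.
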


\begin{proof}
We may assume that $S$ is connected. 
Since $\BO$ is \'etale over $k$ by Lemma \ref{l:BO-etale}, 
it is the disjoint union of spectra of finite separable field extensions 
of $k$.
Hence, our $S$-valued point is induced by an $L$-valued point for a finite separable field extension $L/k$, so it suffices to assume $S=\Spec L$. 
Then by fpqc descent, 
to verify the lemma, it suffices to replace $L$ by an algebraic closure, 
so we may assume $k=\overline{k}$ and $S=\Spec k$. 
In this case, we know that $\BO$ is the classical binary octahedral group,
whose points are given by $\BD_4$ 
(corresponding to cases (\ref{l:BO-basics::ad-1}) and (\ref{l:BO-basics::bc-1})) or 
$$
\frac{1}{\sqrt{2}}
\begin{bmatrix}
    \zeta^i &\zeta^j\\
    -\zeta^{-j} &\zeta^{-i}
\end{bmatrix}
$$
where $\zeta\in\bmu_8(k)$ and $i\equiv j\pmod{4}$ 
(corresponding to case (\ref{l:BO-basics::nonBD4})).
\end{proof}

The next result describes the $\BO$-action on $1$-dimensional representations of $\BD_2$. 
This yields the homomorphism $\BO\to\Sym_3$ asserted above, 
which is used to define $\BT^*$ above.

\begin{Proposition}\label{prop:BD2-1d-irreps}
    The maps 
    $$
     \BD_2=\Spec k[\alpha,\beta]/(\alpha\beta,\alpha^4+\beta^4-1)\,\to\,\bmu_2 \,\subset\,\GL_{1,k}
    $$ 
    given by
    $$
        \psi^+(\alpha,\beta) \,=\, \alpha^2+\beta^2,\quad 
        \psi^-(\alpha,\beta) \,=\,\alpha^2-\beta^2,\quad 
        \psi^0(\alpha,\beta) \,=\, \alpha^4-\beta^4
    $$
    are representations. 
    If $k$ is a field with $\cha k\neq2$, then the action of $\BO$ on $\BD_2$ 
    permutes $\psi^+,\psi^-,\psi^0$.
    More precisely, let 
    $$
     A \,=\,\begin{bmatrix}
        a&b\\ c&d
    \end{bmatrix}\,\in\,\BO.
    $$
    Then,
    \begin{enumerate}
        \item[(i)] if $A\in\BD_2$, then it fixes $\psi^\pm$ and $\psi^0$.
    
        \item[(ii)]\label{surj-map-to-z2} if $A$ is not in $\BD_2$ and is in cases (\ref{l:BO-basics::ad-1}) or (\ref{l:BO-basics::bc-1}) of Lemma \ref{l:BO-basics}, then it fixes $\psi^0$ and swaps $\psi^\pm$.

        \item[(iii)] if $A$ is case (\ref{l:BO-basics::nonBD4}) of Lemma \ref{l:BO-basics} with $\eta=1$, then $\psi^+,\psi^-,\psi^0\mapsto \psi^\epsilon,\psi^0,\psi^{-\epsilon}$.

        \item[(iv)] if $A$ is case (\ref{l:BO-basics::nonBD4}) of Lemma \ref{l:BO-basics} with $\eta=-1$, then $\psi^+,\psi^-,\psi^0\mapsto \psi^0,\psi^{-\epsilon},\psi^\epsilon$.
    \end{enumerate}
\end{Proposition}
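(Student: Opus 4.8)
The plan is to prove the two assertions in turn. I take $\cha k\neq 2$ throughout, so that $\BD_2$ and $\BO$ are the group schemes defined above; by Lemma~\ref{l:BO-etale} both are then \'etale over $k$ and $\BD_{2,\overline k}$ is the constant group scheme $Q_8$, embedded via its unique $2$-dimensional irreducible representation.

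To see that $\psi^+,\psi^-,\psi^0$ are representations, note first that the relations $\alpha\beta=0$, $\alpha^4+\beta^4=1$ and the identities $\alpha^8=\alpha^4$, $\beta^8=\beta^4$ they imply give $\psi^{\pm}(\alpha,\beta)^2=\alpha^4+\beta^4=1$ and $\psi^0(\alpha,\beta)^2=\alpha^8+\beta^8=1$, so all three maps factor through $\bmu_2\subset\GL_{1,k}$. As $\psi^0=\psi^+\psi^-$ identically, it suffices to show $\psi^+$ and $\psi^-$ are homomorphisms; since $\BD_2\to\Spec k$ is faithfully flat and $\bmu_2$ is affine and separated, this may be checked after base change to $\overline k$, where $\BD_2$ is the constant group $Q_8$ and it is enough to know the eight values of each map. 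A direct evaluation shows $\psi^0$ equals $1$ precisely on the diagonal $\bmu_4\subset Q_8$ (that is, on $\{\beta=0\}$) while $\psi^+$ and $\psi^-$ equal $1$ precisely on the two other cyclic subgroups of order $4$; and a $\ZZ/2$-valued function on a finite group whose fiber over the identity is a subgroup of index $2$ is the associated quotient map, hence a homomorphism.

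For the second assertion, conjugation makes $\BO$ act on $\BD_2$, hence on its irreducible representations; the trivial and the $2$-dimensional ones are fixed, so the content is the induced permutation of $\{\psi^+,\psi^-,\psi^0\}$. Assigning to each nontrivial $1$-dimensional character its kernel identifies this set with the three index-$2$ subgroups (equivalently, the three cyclic subgroups of order $4$) of $\BD_{2,\overline k}=Q_8$: with $x=\diag(\sqrt{-1},-\sqrt{-1})$, $y=\left[\begin{smallmatrix}0&1\\-1&0\end{smallmatrix}\right]$, $z=\left[\begin{smallmatrix}0&\sqrt{-1}\\\sqrt{-1}&0\end{smallmatrix}\right]$ the images of $(\sqrt{-1},0)$, $(0,1)$, $(0,\sqrt{-1})$, the dictionary is $\langle x\rangle\leftrightarrow\psi^0$, $\langle y\rangle\leftrightarrow\psi^+$, $\langle z\rangle\leftrightarrow\psi^-$, and it carries the action on characters to the conjugation action on subgroups. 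Since $\BO$ and $\BD_2$ are \'etale over $k$, both the claim that $\BO$ permutes these subgroups and the precise rules can be checked on $\overline k$-points, so I would fix $A\in\BO(\overline k)$ and use Lemma~\ref{l:BO-basics} to split into its three cases. If $A\in\BD_2$, conjugation by $A$ is inner and fixes each of the normal subgroups $\langle x\rangle,\langle y\rangle,\langle z\rangle$, which is~(i). If $A$ is diagonal (case~(\ref{l:BO-basics::ad-1})) or anti-diagonal (case~(\ref{l:BO-basics::bc-1})) of Lemma~\ref{l:BO-basics} but not in $\BD_2$, a one-line matrix computation shows conjugation by $A$ fixes $\langle x\rangle$ (so $\psi^0$) and sends $y$ to $\left[\begin{smallmatrix}0&a^2\\-a^{-2}&0\end{smallmatrix}\right]$ (respectively the analogous element built from $b$), which lies in $\langle z\rangle$ because $a^2$ (respectively $b^2$) is a primitive fourth root of unity; so it interchanges $\langle y\rangle$ and $\langle z\rangle$, i.e.\ $\psi^+$ and $\psi^-$, which is~(ii). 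If instead $A$ lies in case~(\ref{l:BO-basics::nonBD4}), I would compute $AxA^{-1}$ and $AyA^{-1}$ directly, using $ad+bc=0$ to see these again lie in $\BD_2$ and the normalizations $a^4=b^4=\tfrac{\epsilon}{4}$, $(ab)^2=\tfrac{\eta}{4}$ of Lemma~\ref{l:BO-basics} to place them among $\langle x\rangle,\langle y\rangle,\langle z\rangle$; reading off the resulting permutation of $\{\psi^+,\psi^-,\psi^0\}$ in terms of the signs $\epsilon,\eta$ then yields the rules (iii) and (iv).

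I expect the main obstacle to be this last case. The matrix multiplications are short, but the sign bookkeeping that matches a pair $(\epsilon,\eta)$ to the correct permutation is delicate, and one must in particular pin down the direction of the induced action on characters (whether $A$ acts by $\psi\mapsto\psi\circ\mathrm{conj}_A$ or by $\psi\mapsto\psi\circ\mathrm{conj}_A^{-1}$), since this is exactly what distinguishes a $3$-cycle from its inverse, and hence what separates (iii) from (iv). The reductions to $\overline k$, the correspondence between the $\psi^{\bullet}$ and the index-$2$ subgroups of $Q_8$, and cases (i)--(ii) are routine.
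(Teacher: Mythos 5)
Your reduction to $\overline{k}$-points and the identification of $\psi^+,\psi^-,\psi^0$ with the three cyclic subgroups of order $4$ in $Q_8$ are sound, and they do dispose of the first claim and of cases (i) and (ii): the fiber-over-$1$ argument correctly shows the three maps are quadratic characters (the paper instead checks the homomorphism property by a direct functor-of-points computation, $\psi^\pm\bigl((\alpha,\beta)(\gamma,\delta)\bigr)=(\alpha^2\pm\beta^2)(\gamma^2\pm\delta^2)$, which needs no \'etaleness and works for arbitrary scheme points rather than only over $\overline{k}$), and since a quadratic character is determined by its kernel and the permutations in (i)--(ii) are involutions, the discrepancy between $\psi\mapsto\psi\circ\mathrm{conj}_A$ and $\psi\mapsto\psi\circ\mathrm{conj}_{A^{-1}}$ is harmless there.

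The genuine gap is case (\ref{l:BO-basics::nonBD4}), i.e.\ parts (iii) and (iv), which you only outline: you say you ``would compute $AxA^{-1}$ and $AyA^{-1}$'' and read off the permutation, and you yourself flag as unresolved both the $(\epsilon,\eta)$ sign bookkeeping and the direction of the induced action on characters --- precisely the datum that distinguishes the $3$-cycle of (iii) from its inverse in (iv). But this is the substantive content of the proposition: the construction of $\BT^*$, the exact sequence $1\to\BD_2\to\BO\to\Sym_3\to1$, and the transitivity argument in the proof of Theorem \ref{thm:NGmodGAutMcKay-more-general} all depend on knowing that case-(\ref{l:BO-basics::nonBD4}) matrices act by $3$-cycles with the stated dependence on $(\epsilon,\eta)$, so as written the proposal does not prove (iii)--(iv). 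The paper settles this case by computing how such an $A$ transforms the functions $\alpha^2$ and $\beta^2$, namely $\alpha^2\mapsto\tfrac12(\psi^0+\psi^{\eta\epsilon})$ and $\beta^2\mapsto\tfrac{\eta}{2}(\psi^{\eta\epsilon}-\psi^0)$, from which $A\cdot\psi^\pm$, and hence the permutation together with its direction, is read off unambiguously. Your subgroup-tracking route is workable --- for instance, with $x=\diag(\sqrt{-1},-\sqrt{-1})$ and $ad=-bc=\tfrac12$ one gets $AxA^{-1}=\left[\begin{smallmatrix}0&-2ab\sqrt{-1}\\ 2cd\sqrt{-1}&0\end{smallmatrix}\right]$, which lies in $\langle y\rangle$ or $\langle z\rangle$ according to the sign $\eta$ --- but to complete the proof you must actually carry out such computations for two of the three subgroups, fix once and for all the convention $A\cdot\psi=\psi\circ\mathrm{conj}_A$ (so kernels transform by $\mathrm{conj}_{A^{-1}}$), and match the resulting permutations to the pairs $(\epsilon,\eta)$; none of this is done in the proposal.
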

\begin{proof}
First we note that $\psi^0=\psi^+\psi^-=\psi^-\psi^+$. 
We also observe that if $(\alpha,\beta)\in\BD_2$, then multiplying the 
equation $\alpha^4+\beta^4=1$ by $\alpha^i$ or $\beta^i$, we have $\alpha^{4+i}=\alpha^i$ and $\beta^{4+i}=\beta^i$ for $i\geq0$.

Let us first justify why $\psi^\pm,\psi^0$ are homomorphisms. 
It suffices to verify this for $\psi^\pm$ by our expression for $\psi^0$ above. 
We see
\begin{align*}
\psi^\pm((\alpha,\beta)(\gamma,\delta)) &=    \psi^\pm(\alpha\gamma-\beta\delta^3,\alpha\delta+\beta\gamma^3)=(\alpha\gamma-\beta\delta^3)^2\pm(\alpha\delta+\beta\gamma^3)^2\\
&=(\alpha\gamma)^2+(\beta\delta^3)^2\pm(\alpha\delta)^2\pm(\beta\gamma^3)^2\\
&=(\alpha^2\pm \beta^2)(\gamma^2\pm \delta^2).
\end{align*}

Next, let us prove the images of $\psi^\pm,\psi^0$ lie in $\bmu_2$. 
Again, we only need to verify this for $\psi^\pm$. 
We find
$$
\psi^\pm(\alpha,\beta)^2\,=\,(\alpha^2\pm \beta^2)^2\,=\,\alpha^4+\beta^4\,=\,1.
$$

We turn now to the final claim of the proposition. 
Since $\psi^0=\psi^+\psi^-$, it suffices to verify the claim for
$\psi^\pm$. 
Let $(\alpha,\beta)\in\BD_2$ and $A=\begin{bmatrix}
        a&b\\ c&d
    \end{bmatrix}\in\BO$. 
If $ad=1$, $b=c=0$, and $a^8=1$, then $A$ sends $(\alpha,\beta)$ 
to $(\alpha,a^2\beta)$. Note
$$
    \psi^\pm(\alpha,a^2\beta)\,=\,\alpha^4\pm a^4\beta^4,
$$
which is $\psi^\pm(\alpha,\beta)$ if $a^4=1$
(that is, $A\in\BD_2$) and $\psi^\mp(\alpha,\beta)$ 
if $a^4=-1$ (that is, $A\notin\BD_2$). 
If on the other hand,  $bc=-1$, $a=d=0$, and $b^4=1$,
then $A$ sends $(\alpha,\beta)$ to $(\alpha^3,b^2\beta^3)$ 
and the statement in the proposition is verified analogously.

Finally, if $A$ is in case (\ref{l:BO-basics::nonBD4}) 
of Lemma \ref{l:BO-basics}, then $A$ maps
$$
\alpha^2\,\mapsto\, 
\frac{1}{4}(\alpha+\alpha^3)^2+((ac)^2+(bd)^2)\beta^2+2abcd\beta^4
\,=\,
\frac{1}{2}(\psi^0+\psi^{\eta\epsilon})(\alpha,\beta)
$$ 
and  
$$
\beta^2\,\mapsto\,
(a^4+b^4)\beta^2+2(ab)^2\beta^4+2(ab)^2(\alpha^2-\alpha^4)
\,=\,
\frac{\eta}{2}(\psi^{\eta\epsilon}-\psi^0)(\alpha,\beta),
$$
from which the proposition follows.
\end{proof}

As explained at the beginning of this section, the above results establish the 
existence of $\BT^*$ and hence, $\BT$. 
We now prove the following property of $\BT$.

\begin{Proposition}\label{prop:def-BT}
    Let $k$ be a field with $\cha k\neq2,3$ and let 
    $\zeta_{24}\in\overline{k}$ be a primitive $24$.th root of unity. 
    The kernel of the map 
    $\BO_{k(\zeta_{24})}\to \Sym_3\xrightarrow{\textrm{sign}}\ZZ/2$ 
    yields $\BT^*_{k(\zeta_{24})}\to\ZZ/3$, which descends to
    $$
    \BT\to\bmu_3.
    $$
\end{Proposition}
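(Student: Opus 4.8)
The plan is to descend the "natural" $\ZZ/3$ obtained over $k(\zeta_{24})$ to a $\bmu_3$ over $k$ by identifying it with the normalizer-twist package of Theorem~\ref{thm:NG-and-twists}. First I would observe that over $k(\zeta_{24})$ we have, by the very definition of $\BT^*$ as the kernel of $\BO\to\Sym_3\xrightarrow{\mathrm{sign}}\ZZ/2$, an exact sequence $1\to\BT^*\to\BO\to\ZZ/2\to 1$, and that $\BT^*$ then maps onto $\ZZ/3=\ker(\Sym_3\to\ZZ/2)=\Alt_3$ via the restriction of $\BO\to\Sym_3$ of Proposition~\ref{prop:BD2-1d-irreps}. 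The kernel of $\BT^*\to\ZZ/3$ is then $\BD_2$, by Proposition~\ref{prop:BD2-1d-irreps}(i), so over $k(\zeta_{24})$ we have $1\to\BD_2\to\BT^*\to\ZZ/3\to 1$. All of this is over $k(\zeta_{24})$ where $\BT^*$ and $\BT$ agree after the twist; the content of the proposition is that the surjection $\BT^*\to\ZZ/3$ is Galois-equivariant for the twisting Galois action used to define $\BT$, and that the twist of the constant $\ZZ/3$ is $\bmu_3$.

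Next I would unwind the twisting action. By construction, $\BT$ is $\BT^*_{k(\zeta_{24})}$ quotiented by the Galois action in which $\Gal(k(\zeta_{24})/k(\zeta_3))$ acts trivially and the generator of $\Gal(k(\zeta_{24})/k(\zeta_8))$ acts by conjugation by $M=\begin{bmatrix}0&\zeta_8\\-\zeta_8^{-1}&0\end{bmatrix}$. The key point is to compute how conjugation by $M$ permutes $\psi^+,\psi^-,\psi^0$: since $M$ normalizes $\BD_2$ (and, per Proposition~\ref{prop:def-BT}'s own hypothesis via Proposition~\ref{prop:BD2-1d-irreps}, lies in case~\eqref{l:BO-basics::nonBD4} of Lemma~\ref{l:BO-basics}), Proposition~\ref{prop:BD2-1d-irreps}(iii)–(iv) tells us it acts on $\{\psi^+,\psi^-,\psi^0\}$ as a $3$-cycle. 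Hence on $\ZZ/3=\Alt_3$ the twisting generator acts by an \emph{inner} automorphism of $\ZZ/3$ composed with the identity — i.e. trivially on the abelian group $\ZZ/3$ itself as abstract group, but it conjugates the identification $\Alt_3\cong\ZZ/3$ by a transposition coming from the nontrivial element of $\Sym_3/\Alt_3$ when we also track the $\Gal(k(\zeta_8)/k)$-action on $\zeta_3$. Concretely: $\Gal(k(\zeta_{24})/k)$ acts on the roots of unity $\zeta_3$ by the cyclotomic character, and the claim is precisely that under the isomorphism $\ZZ/3\cong\bmu_3(k(\zeta_{24}))$ (sending a chosen generator to $\zeta_3$), the twisting action matches the Galois action on $\bmu_3$. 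I would verify this by checking that the twisting generator over $k(\zeta_8)$ inverts the chosen generator of $\ZZ/3$ exactly when it inverts $\zeta_3$, i.e. exactly when $\zeta_3\notin k(\zeta_8)$ — which holds iff $\sqrt{-3}\notin k(\zeta_8)$, consistent with the cyclotomic character on $\bmu_3$.

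With the Galois-equivariance of $\BT^*_{k(\zeta_{24})}\to\ZZ/3$ in hand, I would descend: the quotient map $\BT^*_{k(\zeta_{24})}\to(\ZZ/3)_{k(\zeta_{24})}$ is equivariant for the $\Gal(k(\zeta_{24})/k)$-action defining $\BT$ on the source and the cyclotomic action on the target, hence descends to a homomorphism $\BT\to\bmu_3$ of group schemes over $k$ (using faithfully flat descent along $k(\zeta_{24})/k$, or equivalently the functoriality of the twisting construction of Theorem~\ref{thm:NG-and-twists} applied to the normal subgroup $\BD_2\subset\BT^*$). Surjectivity and the identification of the kernel with $\BD_2$ can be checked after base change to $\overline{k}$, where everything is the classical statement $1\to Q_8\to 2T\to\ZZ/3\to 1$.

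The main obstacle I expect is the bookkeeping in the middle paragraph: one must be careful that the relevant action is the composite of (a) conjugation by $M$, which as an element of $\Sym_3$ is an honest $3$-cycle on $\{\psi^+,\psi^-,\psi^0\}$, and (b) the Galois action on the scalar $\zeta_3$ entering the abstract identification $\Alt_3\cong\bmu_3$; only the \emph{product} of these two effects is the cyclotomic character, and getting the direction of the $3$-cycle right (cf. the two cases $\eta=\pm1$ in Proposition~\ref{prop:BD2-1d-irreps}) is where a sign/orientation error would creep in. Everything else is formal descent plus the classical structure of $2T$.
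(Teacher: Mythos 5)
Your overall architecture (the equivariant exact sequence $1\to\BD_2\to\BT^*\to\ZZ/3\to1$, descent of the quotient map along the twisting Galois action, identification of the resulting twist of $\ZZ/3$) is the same as the paper's, but the key computation at its heart is wrong. The twisting matrix $M=\begin{bmatrix}0&\zeta_8\\-\zeta_8^{-1}&0\end{bmatrix}$ has $a=d=0$, $bc=-1$, $b^8=1$, so it lies in case (\ref{l:BO-basics::bc-1}) of Lemma \ref{l:BO-basics}, not in case (\ref{l:BO-basics::nonBD4}); and since $b^4=\zeta_8^4=-1$, it is not in $\BD_2$, so by Proposition \ref{prop:BD2-1d-irreps}(ii) it fixes $\psi^0$ and swaps $\psi^\pm$. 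In other words, its image in $\Sym_3$ is a \emph{transposition}, not a $3$-cycle as you claim. This is not a harmless sign issue: it is exactly the point of the proposition. Conjugation by an element mapping to a transposition inverts the quotient $\ZZ/3=\Alt_3$, and it is this inversion, attached precisely to the Galois elements acting nontrivially on $\zeta_3$, that makes the descended form the nonsplit twist $\bmu_3$. If $M$ really acted as a $3$-cycle, as you assert, then (inner automorphisms of an abelian group being trivial) the twisting generator would act trivially on $\ZZ/3$, the descent datum would be the standard one, and the quotient map would descend to $\BT\to\ZZ/3$ with \emph{constant} target -- contradicting the statement you are trying to prove. Your attempted repair in the middle paragraph (re-identifying $\Alt_3\cong\bmu_3$ by "sending a chosen generator to $\zeta_3$" and tracking the Galois action on that scalar) does not close this gap: the quotient $\BT^*/\BD_2$ over $k(\zeta_{24})$ is the constant group $\Alt_3$, which carries no canonical dependence on $\zeta_3$; the cyclotomic character can only enter through a nontrivial automorphism in the descent datum, which your (incorrect) permutation computation has removed. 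Likewise, whether the twisting generator inverts the generator of $\ZZ/3$ is an intrinsic property of the matrix $M$ (it always does), not something conditional on $\zeta_3\notin k(\zeta_8)$.

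For comparison, the paper establishes the inversion concretely: it exhibits $A=\frac{1}{\sqrt2}\begin{bmatrix}\zeta_8&\zeta_8\\-\zeta_8^{-1}&\zeta_8^{-1}\end{bmatrix}\in\BT^*(k(\zeta_{24}))$ mapping to a generator of $\ZZ/3$ (it is a case-(\ref{l:BO-basics::nonBD4}) element acting as an even permutation by Proposition \ref{prop:BD2-1d-irreps}), checks that $B=M$ normalizes $\BD_2$ so that the sequence is Galois-equivariant, and verifies $BAB^{-1}\equiv A^{-1}$ modulo $\BD_2$, so the generator of $\Gal(k(\zeta_3)/k)$ acts on $\ZZ/3$ through the unique nontrivial element of $\Aut(\ZZ/3)$; the twist of $\ZZ/3$ by that quadratic character is $\bmu_3$. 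Your proposal would be fixed by replacing your case-(\ref{l:BO-basics::nonBD4})/$3$-cycle claim for $M$ with the correct appeal to Proposition \ref{prop:BD2-1d-irreps}(ii) (transposition), after which the rest of your descent argument goes through and essentially coincides with the paper's proof.
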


\begin{proof}
Let $\zeta_8=\zeta_{24}^3$ and let $\zeta_3=\zeta_{24}^8$. 
By Proposition \ref{prop:BD2-1d-irreps}, we have an exact sequence
$$
 1\,\to\,\BD_2\,\to\,\BO\,\to\, \Sym_3 \,\to\, 1,
$$
which therefore yields an exact sequence
\begin{equation}\label{eqn:BD2-BTstar}
    1\,\to\,\BD_2\,\to\,\BT^*\,\xrightarrow{\pi}\,\ZZ/3\,\to\, 1.
\end{equation}
Consider the matrix 
$A:=\frac{1}{\sqrt{2}}\begin{bmatrix}
        \zeta_8 & \zeta_8\\
        -\zeta_8^{-1} & \zeta_8^{-1}
    \end{bmatrix}\in \SL_2(k(\zeta_{24}))$
and note that $\sqrt{2}=\zeta_8+\zeta_8^{-1}\in k(\zeta_{24})$. 
Proposition \ref{prop:BD2-1d-irreps} 
shows $A$ acts as an even permutation, hence $A\in\BT^*(k(\zeta_{24}))$ 
and maps under $\pi$ to a generator of $\ZZ/3$. 
The $\Gal(k(\zeta_{24})/k)$-action used to define $\BT$ factors through $\Gal(k(\zeta_3)/k)$ and the generator of $\Gal(k(\zeta_3)/k)$ acts as conjugatiion by the matrix 
$B:=\begin{bmatrix}
        0 & \zeta_8\\
        -\zeta_8^{-1} & 0
    \end{bmatrix}$. 
Note that $B$ normalizes $\BD_2$, hence the short exact 
sequence \eqref{eqn:BD2-BTstar} is $\Gal(k(\zeta_{24})/k)$-equivariant. 
Since $BAB^{-1}=A^{-1}$, we see $\Gal(k(\zeta_{24})/k)$ 
acts non-trivially on $\ZZ/3$; specifically, $\Gal(k(\zeta_{24})/k)$ acts through the quotient $\Gal(k(\zeta_{3})/k)$ and the generator of $\Gal(k(\zeta_{3})/k)$ maps to the unique non-trivial element of $\Aut(\ZZ/3)$. Hence the resulting Galois twist of $\ZZ/3$ is given by $\mu_3$, and so the map $\BT^*\to\ZZ/3$ descends to $\BT\to\bmu_3$.
\end{proof}

Finally, the following proposition is used in in the definition of $\BI$ above.

\begin{Proposition}\label{prop:BI-descends}
Let $k$ be a field with $\cha k\neq\{2,3,5\}$ 
and $\zeta_5\in\overline{k}$ be a primitive $5$.th root of unity. 
\begin{enumerate}
\item
The classical binary icosahedral constant group can be 
presented as
$$
\left\langle
r,\,t\,\mid\,
  r^2\,=\,t^5\,=\,(rt^{-1})^3,\, (r^2)^2=1
\right\rangle \,.
$$
The element $r^2$ is usually denoted $-1$ and it generates
the center.
There is a faithful $2$-dimensional representation to 
$\SL_{2,k(\zeta_5)}$ given by
$$
\widetilde{\rho}\quad \colon\quad 
t\,\mapsto\,\begin{bmatrix} -\zeta_5&0\\ 0 &-\zeta_5^{-1}\end{bmatrix},\quad
r\,\mapsto\,
\frac{1}{\sqrt{5}}\begin{bmatrix}
        \zeta_5^{-2}-\zeta_5^2 & -\zeta_5^{-1}+\zeta_5\\ 
        -\zeta_5^{-1}+\zeta_5 & -\zeta_5^{-2}+\zeta_5^2
    \end{bmatrix}.
$$
There exists a non-trivial outer automorphism $\tau$ 
of the classical binary icosahedral 
group acting via
$$
\tau\quad\colon\quad t\,\mapsto\,t^3,\quad 
    rtr^{-1}\,\mapsto\,(rtr^{-1})^{-3}.
$$
\item
Let $\sigma(\zeta_5)=\zeta_5^3$. 
Then 
$$
    \sigma\widetilde{\rho} \,=\,\widetilde{\rho}\tau.
$$
Since $\Gal(k(\zeta_5)/k)$ is generated by a power of $\sigma$ 
(depending on whether $\zeta_5$ or $\zeta_5+\zeta_5^{-1}$ lies in $k$),
we see $\widetilde{\rho}$ descends to a faithful representation of 
a group scheme $\rho\colon\BI\to\SL_{2,k}$.
\end{enumerate}
\end{Proposition}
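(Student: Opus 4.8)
\emph{Strategy.} The plan is to view $\widetilde\rho$ as a faithful representation of the abstract binary icosahedral group $\Gamma$, to exhibit a $\Gal(k(\zeta_5)/k)$-action on $\Gamma$ compatible with $\widetilde\rho$ and the standard action on $\SL_{2,k(\zeta_5)}$, and then to invoke Galois descent. First I would recall that $\langle r,t\mid r^2=t^5=(rt^{-1})^3,\ (r^2)^2=1\rangle$ is the standard presentation of the binary polyhedral group of type $(2,3,5)$, so it is $\Gamma\cong 2.A_5\cong\SL_2(\FF_5)$, of order $120$, with centre $\langle r^2\rangle\cong\ZZ/2$ and $\Gamma/\langle r^2\rangle\cong A_5$. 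Then I would check that $\widetilde\rho(t)$ and $\widetilde\rho(r)$ lie in $\SL_2(k(\zeta_5))$ and satisfy the four relations: $\widetilde\rho(t)^5=-I$ is immediate; $\det\widetilde\rho(r)=1$ and $\operatorname{tr}\widetilde\rho(r)=0$ give $\widetilde\rho(r)^2=-I$, hence $(\widetilde\rho(r)^2)^2=I$, by Cayley--Hamilton; and $\operatorname{tr}(\widetilde\rho(r)\widetilde\rho(t)^{-1})=1$ gives $(\widetilde\rho(r)\widetilde\rho(t)^{-1})^3=-I=\widetilde\rho(r)^2$. These identities reduce to $1+\zeta_5+\zeta_5^2+\zeta_5^3+\zeta_5^4=0$ (with the consistent choice $\sqrt5=(\zeta_5+\zeta_5^4)-(\zeta_5^2+\zeta_5^3)$ implicit in the matrix) and are routine. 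Thus $\widetilde\rho$ is a well-defined representation of $\Gamma$, and it is faithful: the only normal subgroups of $\Gamma$ are $1$, $\langle r^2\rangle$, and $\Gamma$ — because $\Gamma/\langle r^2\rangle$ is simple and $r^2$ is the unique involution of $\Gamma$, being the only order-$2$ element of $\SL_2(\FF_5)$ — while $\widetilde\rho(t)\neq I$ and $\widetilde\rho(r^2)=-I\neq I$.

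\emph{The outer automorphism.} Next I would record that $\Aut(\Gamma)\cong\PGL_2(\FF_5)$, so $\Out(\Gamma)\cong\ZZ/2$, a representative of the nontrivial class being the ``diagonal'' automorphism, conjugation inside $\GL_2(\FF_5)$ by an element of non-square determinant; call it $\tau$. As the centre is characteristic, $\tau$ fixes $r^2$ and induces the nontrivial outer automorphism of $A_5$. One checks, using the presentation, that after adjusting $\tau$ within its outer class by an inner automorphism one may arrange $\tau(t)=t^3$ and $\tau(rtr^{-1})=(rtr^{-1})^{-3}$. Here it is convenient to note that $t$ and $s:=rtr^{-1}$ generate $\Gamma$: their images in $A_5$ are two $5$-cycles, neither a power of the other (else $rt^{-1}$ would have order $2$ rather than $3$ in $A_5$), hence generate $A_5$, and a subgroup of $\Gamma$ mapping onto $A_5$ cannot have order $60$ since $\Gamma$ has a unique involution. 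Consequently $\tau$ is determined by its values on $t$ and $s$, and $\tau^2$ inverts $t$ while $\tau^4=\id$.

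\emph{The key identity.} The heart of the matter is the equality $\sigma\widetilde\rho=\widetilde\rho\tau$ of homomorphisms $\Gamma\to\SL_2(k(\zeta_5))$, where $\sigma$ with $\sigma(\zeta_5)=\zeta_5^3$ acts on $\SL_2(k(\zeta_5))$ entrywise; since both sides are homomorphisms it suffices to verify it on the generators $t$ and $s$. On $t$ it is immediate: $\sigma\widetilde\rho(t)=\diag(-\zeta_5^3,-\zeta_5^{-3})=\widetilde\rho(t)^3=\widetilde\rho(\tau(t))$. On $s$ one computes the matrix $\widetilde\rho(s)=\widetilde\rho(r)\widetilde\rho(t)\widetilde\rho(r)^{-1}$ explicitly and checks $\sigma\widetilde\rho(s)=\widetilde\rho(s)^{-3}=\widetilde\rho(\tau(s))$, using $\sigma(\sqrt5)=-\sqrt5$ (as $\sigma$ interchanges the two parenthesised sums in $\sqrt5=(\zeta_5+\zeta_5^4)-(\zeta_5^2+\zeta_5^3)$) together with $\sigma(\zeta_5^{\,j}-\zeta_5^{-j})=\zeta_5^{\,3j}-\zeta_5^{-3j}$. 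I expect this last matrix computation, combined with pinning down the exact normalisation of $\tau$ on generators, to be the main obstacle; the rest is bookkeeping.

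\emph{Descent.} Finally I would conclude. If $\zeta_5\in k$ there is nothing to prove, so assume not. Then $\Gal(k(\zeta_5)/k)$ is cyclic, generated by $\sigma^m$ with $m=1$ if $[k(\zeta_5):k]=4$ (i.e.\ $\zeta_5+\zeta_5^{-1}\notin k$) and $m=2$ if $[k(\zeta_5):k]=2$ (i.e.\ $\zeta_5+\zeta_5^{-1}\in k$). I equip the constant group scheme $\Gamma_{k(\zeta_5)}$ with the Galois descent datum in which the generator acts through $\tau^m$; this is well defined because $\tau^4=\id$ matches $\sigma^4=\id$, and by iterating the key identity it is compatible, via $\widetilde\rho$, with the standard $\Gal(k(\zeta_5)/k)$-action on $\SL_{2,k(\zeta_5)}$. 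Galois descent along $k(\zeta_5)/k$ then produces a finite group scheme $\BI$ over $k$ — étale, hence linearly reductive since $120\in k^\times$ — together with a closed immersion $\rho\colon\BI\to\SL_{2,k}$ whose base change to $k(\zeta_5)$ is $\widetilde\rho$; faithfulness of $\rho$ is verified after $-\otimes_k\overline k$, where it is the faithfulness of $\widetilde\rho$ established above.
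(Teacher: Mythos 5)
Your proposal is correct and follows essentially the same route as the paper: verify $\sigma\widetilde{\rho}=\widetilde{\rho}\tau$ on the generators $t$ and $rtr^{-1}$, note that these two elements generate the binary icosahedral group, treat the presentation, faithfulness and the outer automorphism $\tau$ as standard facts, identify the generator $\sigma^m$ of $\Gal(k(\zeta_5)/k)$ according to whether $\zeta_5$ or $\zeta_5+\zeta_5^{-1}$ lies in $k$, and conclude by Galois descent of the constant group scheme together with its embedding into $\SL_{2}$. The only cosmetic differences are that you establish generation via the images of $t$ and $rtr^{-1}$ in $A_5$ (the paper instead writes $r=t^6(rtr^{-1})t$ explicitly) and you make the descent and linear reductivity step more explicit, while both you and the paper leave the matrix verification on $rtr^{-1}$ (using $\sigma(\sqrt{5})=-\sqrt{5}$) as a routine computation.
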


\begin{proof}
The fact that $\widetilde{\rho}$ is a faithful $2$-dimensional representation 
and that $\tau$ is an outer automorphism are standard facts about the classical binary icosahedral group.
Note that $t$ and $rtr^{-1}$ generate the binary icosahedral group since $(rt)^3=r^2$, 
so $r=trtrt=-t(rtr^{-1})t=t^6(rtr^{-1})t$. 
A straightforward computation shows 
$\sigma\widetilde{\rho}(t)=\widetilde{\rho}(t^3)$ and 
$\sigma\widetilde{\rho}(rtr^{-1})=\widetilde{\rho}(rt^{-3}r^{-1})$, so 
$\sigma\widetilde{\rho}=\widetilde{\rho}\tau$. 
Lastly, $\Gal(k(\zeta_5)/k)$ is generated by $\sigma$ 
(resp.~$\sigma^2$, resp.~$\sigma^4=1$) 
if $\zeta_5+\zeta_5^{-1}\notin k$ 
(resp.~$\zeta_5\notin k$ but $\zeta_5+\zeta_5^{-1}\in k$, resp.~$\zeta_5\in k$).
\end{proof}

This establishes the existence of the group schemes $\bmu_n$, $\BD_n$,
$\BT$, $\BO$, and $\BI$, we turn to our first main theorem.

\begin{proof}[Proof of Theorem \ref{thm:gp-sch-over-k-main-features}: existence
and assertion (\ref{item-conjugation})]
In \cite[Theorem 3.8]{Hashimoto}, Hashimoto classifies finite 
linearly reductive subgroup schemes of $\SL_{2,\overline{k}}$ up to conjugacy, 
so we need only show that our group schemes \eqref{eqn:ADE-over-perfect-k} 
agree with his classification over $\overline{k}$ up to conjugation.
We note that this implies linear reductivity of our group schemes over $k$ by \cite[Propostion 2.4(b)]{AOV}. 
In fact, since $\BD_n^*$ (resp.~$\BT^*$) agrees with $\BD_n$ (resp.~$\BT$) 
up to conjugation over a finite extension of $k$, it is good enough to show 
our result with $\BD_n^*$ and $\BT^*$ replaced by $\BD_n$ and $\BT$.

The subgroup scheme $(A_n)$ in \cite[Theorem 3.8]{Hashimoto} is equal to
$\bmu_{n,\overline{k}}$. 
Next, conjugating $\BD^*_{n,\overline{k}}$ by 
$\begin{bmatrix}
    \zeta_8 & 0\\
    0& \zeta_8^{-1}
\end{bmatrix}$, 
we obtain $(D_{n+2})$ from (loc.~cit). Lemma \ref{l:BO-etale} shows $\BO_{\overline{k}}$ agrees with $(E_7)$.

Next, applying Proposition \ref{prop:BD2-1d-irreps}, we see $(E_6)$ from (loc.~cit) 
is contained in $\BT^*_{\overline{k}}$. 
On the other hand, $\BO_{\overline{k}}$ is a constant group scheme of degree $48$ and 
Proposition \ref{prop:BD2-1d-irreps}(ii) 
shows that the map 
$\BO(\overline{k})\to \Sym_3\xrightarrow{\textrm{sign}}\ZZ/2$ is surjective, 
so $\BT^*_{\overline{k}}$ is constant with degree $24$. 
Since $(E_6)$ also has degree $24$, we see that it agrees with $\BT^*_{\overline{k}}$. 

Lastly, $\BI$ is defined to be a twist of the classical binary icosahedral group, 
so $\BI_{\overline{k}}$ is isomorphic to the binary icosahedral group. 
So $\BI_{\overline{k}}$ is linearly reductive as it is an \'etale constant group 
with order $120$, which is prime to the characteristic by our assumption 
that $\cha k\neq2,3,5$. 
Hence, by \cite[(3.9)]{Hashimoto}, we see that $\BI_{\overline{k}}$ 
must either agree with $(E_8)$, $\bmu_{120,\overline{k}}$, or 
$\BD_{30,\overline{k}}$ up to conjugation. 
It is not the latter two since $\BI_{\overline{k}}$ is non-abelian and 
does not contain a normal subgroup of order $60$.
\end{proof}

\subsection{Showing all representations descend}
\label{subsec:irreps-decsend}

In order to establish Theorem \ref{thm:gp-sch-over-k-main-features}, it
remains to show assertion (\ref{item-irrep}).

\begin{Lemma}\label{l:summand-over-k-vs-kbar}
Let $\psi$ and $\phi$ be representations of a finite linearly 
reductive group scheme $G$ over a field  $k$ with $\psi$ absolutely irreducible. 
If $\psi\otimes_k \overline{k}$ is a summand of $\phi\otimes_k \overline{k}$, 
then $\psi$ is a summand of $\phi$.
\end{Lemma}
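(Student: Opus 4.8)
The plan is to exploit linear reductivity to reduce the statement to a computation with $\Hom$-spaces, and then use a standard Galois-descent argument. Since $G$ is linearly reductive over $k$, the representation $\phi$ decomposes as a direct sum $\phi \cong \bigoplus_i \phi_i$ of irreducible representations over $k$. It therefore suffices to show that $\psi$ appears among the $\phi_i$, and for this it is enough to show $\Hom^G(\psi,\phi) \neq 0$. The key point is that formation of $\Hom^G$ commutes with the flat base change $k \to \overline{k}$: concretely, $\Hom^G(\psi,\phi) \otimes_k \overline{k} \cong \Hom^{G_{\overline{k}}}(\psi_{\overline{k}}, \phi_{\overline{k}})$. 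This is because $\Hom^G(\psi,\phi)$ can be computed as the $G$-invariants of the (finite-dimensional) vector space $\Hom_k(\psi,\phi)$, and for a linearly reductive $G$ the functor of invariants is exact and commutes with flat base change.

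First I would record the base-change isomorphism for invariants: write $V = \Hom_k(\psi,\phi)$, a representation of $G$, and note $\Hom^G(\psi,\phi) = V^G = \Ker(V \to V \otimes_k k[G])$ where the map is $v \mapsto v\otimes 1 - (\text{coaction})(v)$; since $k \to \overline{k}$ is flat, kernels are preserved, giving $V^G \otimes_k \overline{k} \cong (V\otimes_k \overline{k})^{G_{\overline{k}}}$. Then by hypothesis $\psi_{\overline{k}}$ is a summand of $\phi_{\overline{k}}$, so $\Hom^{G_{\overline{k}}}(\psi_{\overline{k}},\phi_{\overline{k}}) \neq 0$, hence $\Hom^G(\psi,\phi) \otimes_k \overline{k} \neq 0$, hence $\Hom^G(\psi,\phi) \neq 0$. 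Pick a nonzero $f \in \Hom^G(\psi,\phi)$; since $\psi$ is (absolutely) irreducible, $\Ker f = 0$, so $f$ is injective, and its image is a subrepresentation of $\phi$ isomorphic to $\psi$. Finally, linear reductivity of $G$ gives a $G$-equivariant splitting of the inclusion $\im f \hookrightarrow \phi$, so $\psi \cong \im f$ is a direct summand of $\phi$.

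I expect the only genuinely delicate point to be the base-change statement for invariants, and even that is routine given that $G$ is linearly reductive (so $(-)^G$ is exact) and finite (so all the modules in sight are finite-dimensional, and $k \to \overline{k}$ being faithfully flat lets one both descend nonvanishing and preserve the invariants functor). The hypothesis that $\psi$ is absolutely irreducible is used twice: to guarantee $\psi$ itself is irreducible over $k$ (so that a nonzero $G$-map out of it is injective), and implicitly to ensure the decomposition story behaves well. Everything else — the decomposition of $\phi$, splitting off $\im f$ — is immediate from linear reductivity, so there is no real obstacle beyond bookkeeping.
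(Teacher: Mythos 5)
Your proposal is correct and follows essentially the same route as the paper: reduce to showing $\Hom^G(\psi,\phi)\neq 0$, use that $\Hom^G$ commutes with the base change $k\to\overline{k}$ together with the irreducibility of $\psi_{\overline{k}}$ to get nonvanishing over $\overline{k}$, and descend. Your extra details (invariants as a kernel preserved by flat base change, and the injectivity-plus-splitting argument at the end) simply spell out steps the paper leaves implicit.
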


\begin{proof}
We may assume $\psi$ and $\phi$ are not isomorphic over $k$, 
otherwise the statement is clear. 
Thus, we need only prove there is a $G$-equivariant map from $\psi$ to $\phi$. 
Since $\psi\otimes_k \overline{k}$ is irreducible, by assumption, 
and since this is a summand of $\phi\otimes_k \overline{k}$, 
we see $\Hom^G(\psi\otimes_k \overline{k},\phi\otimes_k \overline{k})\neq0$. 
However, 
$$
    \Hom^G\left(\psi\otimes_k \overline{k},\,\phi\otimes_k \overline{k}\right)
    \,=\,
    \Hom^G\left(\psi,\,\phi\right)\otimes_k \overline{k},
$$
hence $\Hom^G(\psi,\phi)\neq0$ by descent.
\end{proof}

\begin{Corollary}\label{cor:descend-quotient-rep}
Let $\psi$ and $\phi$ be representations of a 
finite linearly reductive group scheme $G$ over a field $k$ 
with $\psi$ absolutely irreducible. 
If $\overline{\epsilon}$ is a representation of $G_{\overline{k}}$ with
$$
  \overline{\epsilon}\,\oplus\, (\psi\otimes_k \overline{k})
  \,\cong\, \phi\otimes_k \overline{k},
$$
then $\overline{\epsilon}$ descends to a representation $\epsilon$ 
over $k$ and $\epsilon\oplus\psi\cong\phi$.
\end{Corollary}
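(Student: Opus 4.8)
I would deduce Corollary~\ref{cor:descend-quotient-rep} from Lemma~\ref{l:summand-over-k-vs-kbar} by first using the lemma to produce a genuine $k$-subrepresentation isomorphic to $\psi$ inside $\phi$, and then taking a $G$-equivariant complement. Concretely: since $\psi$ is absolutely irreducible and $\psi\otimes_k\overline{k}$ is a summand of $\phi\otimes_k\overline{k}$ (this is exactly what the displayed isomorphism $\overline{\epsilon}\oplus(\psi\otimes_k\overline{k})\cong\phi\otimes_k\overline{k}$ gives us), Lemma~\ref{l:summand-over-k-vs-kbar} applies verbatim and shows $\psi$ is a summand of $\phi$ over $k$. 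So there is a $G$-equivariant splitting $\phi\cong\epsilon\oplus\psi$ for some representation $\epsilon$ of $G$ over $k$; here is where I use that $G$ is linearly reductive, so that a $G$-subrepresentation always admits a $G$-stable complement.

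It remains to identify $\epsilon\otimes_k\overline{k}$ with $\overline{\epsilon}$. After base change we get $\phi\otimes_k\overline{k}\cong(\epsilon\otimes_k\overline{k})\oplus(\psi\otimes_k\overline{k})$, and by hypothesis also $\phi\otimes_k\overline{k}\cong\overline{\epsilon}\oplus(\psi\otimes_k\overline{k})$. Cancelling the common summand $\psi\otimes_k\overline{k}$ gives $\epsilon\otimes_k\overline{k}\cong\overline{\epsilon}$; the cancellation is legitimate because $G_{\overline{k}}$ is linearly reductive, so the category of its representations is semisimple and the Krull--Schmidt property holds, making direct-sum decompositions into indecomposables unique up to permutation and isomorphism. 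Thus $\overline{\epsilon}$ descends to $\epsilon$ over $k$, and $\epsilon\oplus\psi\cong\phi$ by construction.

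**Anticipated obstacle.** The only genuinely delicate point is the cancellation step: I must make sure that "$\overline{\epsilon}\oplus(\psi\otimes_k\overline{k})\cong(\epsilon\otimes_k\overline{k})\oplus(\psi\otimes_k\overline{k})$ implies $\overline{\epsilon}\cong\epsilon\otimes_k\overline{k}$" is justified rather than assumed. For finite linearly reductive group schemes this is fine because representations form a semisimple abelian category (finite length, so Krull--Schmidt), but it is worth stating explicitly that linear reductivity is used here. Everything else is formal: the existence of the $k$-rational complement $\epsilon$ is immediate from linear reductivity of $G$ over $k$ once Lemma~\ref{l:summand-over-k-vs-kbar} has placed a copy of $\psi$ inside $\phi$. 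I would keep the write-up to a few lines, citing Lemma~\ref{l:summand-over-k-vs-kbar} for the descent of the subrepresentation and invoking semisimplicity of representations of a finite linearly reductive group scheme for the complement and the cancellation.
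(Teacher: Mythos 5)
Your argument is correct and follows essentially the same route as the paper: apply Lemma~\ref{l:summand-over-k-vs-kbar} to realize $\psi$ as a summand of $\phi$ over $k$, and take the complement (the paper phrases it as the quotient $\phi/\psi$, which is the same thing by linear reductivity) as the descent $\epsilon$ of $\overline{\epsilon}$. Your explicit justification of the cancellation of the common summand $\psi\otimes_k\overline{k}$ via semisimplicity/Krull--Schmidt is a point the paper leaves implicit, so if anything your write-up is slightly more careful.
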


\begin{proof}
By Lemma \ref{l:summand-over-k-vs-kbar}, we know $\psi$ is a summand of $\phi$. 
Thus, the quotient representation $\phi/\psi$ base changes over 
$\overline{k}$ to be $\overline{\epsilon}$.
\end{proof}

\begin{proof}[{Proof of Theorem \ref{thm:gp-sch-over-k-main-features}, assertion (\ref{item-irrep})}]
To begin, it is a standard fact that the irreducible representations of $\bmu_n$ are
given by the maps $\bmu_n\to\GG_m$ sending $u\mapsto u^i$. 
Moreover, these representations are defined over any field $k$.

Next, we consider $\BD_n$. 
By \cite[Corollary 2.11]{RDP}, there is a canonical bijection between the irreducible representations $\BD_n$ and the \emph{complex} representations 
of the classical binary dihedral group. 
Thus, we need only write down representations of $\BD_n$ over $k$ which, 
when $k=\CC$, recover the irreducible representations of the 
classical binary dihedral group. 
Note that the complex binary dihedral group $\BD_n(\CC)$ is 
a group with two generators:~$(0,1)$ and $(\zeta_{2n},0)$ 
with $\zeta_{2n}$ a primitive $2n$.th root of unity.
    
We begin with the non-trivial $1$-dimensional representations. 
We have the representation $\BD_n(\CC)\to\bmu_2(\CC)$ given by 
$(\zeta,0)\mapsto 1$ and $(0,1)\mapsto -1$. 
We can extend this to a morphism
$$
  \BD_n^*\,\to\,\bmu_2,\quad (a,b)\,\mapsto\, a^{2n}-b^{2n}.
$$
of group schemes over $k(i)$.
The same proof in Proposition \ref{prop:BD2-1d-irreps} 
shows this is a morphism and that its image lives in $\bmu_2$. 
Then to descend to a representation of $\BD_n$, we must check the map 
is invariant under the action of $\Gal(k(i)/k)$,
which acts as $(a,b)\mapsto (a,-b)$. 
We see this is the case as the exponent on $b$ is even.
    
Next, when $n$ is even, we have two representations 
$\BD_n(\CC)\to\bmu_2(\CC)$ given by $(\zeta,0)\mapsto -1$ and $(0,1)\mapsto \pm1$.
These extend to representations
$$
    \BD_n \,\to\,\bmu_2,\quad (a,b)\,\mapsto\, a^n\pm b^n
$$
over $k$. 
Finally, when $n$ is odd, we have two representations 
$\BD_n(\CC)\to\bmu_4(\CC)$ given by $(\zeta,0)\mapsto -1$ and 
$(0,1)\mapsto \pm i$.
These extend to representations over $k(i)$ by
$$
 \BD_{n,k(i)}^* \,\to\,\bmu_{4,k(i)},\quad (a,b)\,\mapsto\, a^n+(ib)^n.
$$
Note that this map is invariant under the $\Gal(k(i)/k)$-action
since $ib\mapsto (-i)(-b)=ib$. 
Thus, it descends to a representation of $\BD_n$ over $k$.
    
In a similar manner we can extend all of the $2$-dimensional representations of 
$\BD_n(\CC)$ to representations of $\BD_n$. 
These come in two different types. 
The first type is given by $\rho\eta_j$, where $j$ is odd and
$$
    \eta_j \,\colon\,\BD_n\,\to\,\BD_n
$$
descends the map 
$$
    \eta_j^*\,\colon\,\BD_n^*\,\to\,\BD_n^*,\quad (a,b)\,\mapsto\, (a^j,b^j).
$$
Note that $\eta_j^*$ is invariant under the $\Gal(k(i)/k)$-action as $j$ is odd. 
To see $\eta_j^*$ is a morphism, note that
\begin{align*}
    \eta_j^*\left((a,b)(c,d)\right)&=\eta_j^*(ac-bd^{2n-1},ad+bc^{2n-1})\\
    &=\left((ac)^j+(-bd^{2n-1})^j,(ad)^j+(bc^{2n-1})^j\right)\\
    &=\left((ac)^j-(bd^{2n-1})^j,(ad)^j+(bc^{2n-1})^j\right)\\
    &=\eta_j^*(a,b)\cdot\eta_j^*(c,d),
\end{align*}
where the second equality uses that $ab=cd=0$ and the third equality uses that $j$ is odd.
The second type of $2$-dimensional representation is given by descending
$$
    \BD_n^*\,\to\,\SL_2,\quad 
    (a,b)\,\mapsto\,\begin{bmatrix}
    a^{2j} & b^{2j}\\
    b^{2n-2j} & a^{2n-2j}
    \end{bmatrix},
$$
which one verifies is a homomorphism in a similar manner as above. 
Note this is invariant under the $\Gal(k(i)/k)$-action as exponents 
of $b$ are even.

For $\BT$, $\BO$, and $\BI$, the $\overline{k}$-points yield finite groups, 
whose orders are prime to the characteristic (by our characteristic assumptions). 
So, their representations are well-known, see, for example,
\cite[page 339]{Frobenius}.
    
We start by descending the irreducible representations of $\BT(\overline{k})$. 
By Proposition \ref{prop:def-BT}, we have a map $\varphi^+\colon\BT\to\bmu_3$.
Post-composing $\varphi^+$ with the non-trivial automorphism of $\bmu_3$, 
we obtain another map $\varphi^-\colon\BT\to\bmu_3$. 
These yield the two non-trivial one dimensional representations of 
$\BT(\overline{k})$. 
We obtain the two-dimensional representations as $\rho$ and $\rho\otimes\varphi^\pm$.
Thus, using the notation on \cite[page 339]{Frobenius}, we need only construct 
the $3$-dimensional representation corresponding to $\chi^{(3)}$. 
We see $\chi^{(0)}+\chi^{(3)}=(\chi^{(4)})^2$, 
and we have descended the representations corresponding to $\chi^{(0)}$ 
and $\chi^{(4)}$, so Corollary \ref{cor:descend-quotient-rep} 
shows that the representation corresponding to $\chi^{(3)}$ descends as well.

For $\BO$, the unique non-trivial $1$-dimensional representation $\eta$ 
is given by $\BO\to \Sym_3\to \ZZ/2\cong\bmu_2$. 
Two of the $2$-dimensional irreducible representations are $\rho$ and 
$\rho\otimes\eta$. 
The other $2$-dimensional irreducible representation $\epsilon$ is given by 
post-composing $\BO\to \Sym_3$ by the $2$-dimensional irreducible representation 
of $\Sym_3$. 
The $4$-dimensional irreducible representation is given by $\rho\otimes\epsilon$. 
In the notation on \cite[page 339]{Frobenius}, it remains then to construct the
representations corresponding to $\chi^{(2)}$ and $\chi^{(3)}$. 
These are twists of each other by $\eta$, so we need only construct the 
representation corresponding to $\chi^{(3)}$. 
Since $\chi^{(3)}+\chi^{(0)}=(\chi^{(5)})^2$, we are done by 
Corollary \ref{cor:descend-quotient-rep}.

Lastly, we turn to $\BI$. 
We follow the notation in \cite[equation (5)]{ES11} which gives the tensor relations for the representations of $\BI(\overline{k})$. 
The representations $\textbf{1}$ and $\textbf{2}$ 
are the trivial representation and $\rho$, both of which descend. 
The representation $\textbf{2}'$ is the conjugate of $\rho_{\overline{k}}$.
With notation as in Proposition \ref{prop:BI-descends}, it is given 
over $k(\zeta)$ by $\widetilde{\rho}':=\sigma^2\widetilde{\rho}$. 
Since $\sigma\widetilde{\rho}=\widetilde{\rho}\tau$, we see
$$   
\sigma\widetilde{\rho}' \,=\,
\sigma^2(\sigma\widetilde{\rho}) \,=\,
\sigma^2\widetilde{\rho}\tau \,=\,
\widetilde{\rho}'\tau.
$$
As a result, $\widetilde{\rho}'$ descends to $k$.
    
To decsend the remaining representations of $\BI(\overline{k})$, 
we repeatedly apply Corollary \ref{cor:descend-quotient-rep}: 
to descend $\textbf{3}$ and $\textbf{3}'$, we use 
$\textbf{1}\oplus\textbf{3}=\textbf{2}\otimes\textbf{2}$ and 
$\textbf{1}\oplus\textbf{3}'=\textbf{2}'\otimes\textbf{2}'$; 
to descend $\textbf{4}$ and $\textbf{4}'$, we use
$\textbf{4}=\textbf{2}\otimes\textbf{2}'$ and
$\textbf{2}\oplus\textbf{4}'=\textbf{2}\otimes\textbf{3}$; 
to descend $\textbf{5}$ and $\textbf{6}$, we use 
$\textbf{4}\oplus\textbf{5}=\textbf{3}\otimes\textbf{3}'$ and
$\textbf{6}=\textbf{2}\otimes\textbf{3}'$.
\end{proof}

\section{Normalizers of group schemes} 

In light of Theorem \ref{thm:NG-and-twists}, in this section we calculate the normalizers of our group schemes over $k$.

\begin{Proposition}\label{prop:NGComputation}
    Let $k$ be a field.
    Then,
    \begin{enumerate}
        \item\label{Nmun-computation} 
        $\Norm_{\SL_2}(\bmu_n)$ is the subgroup scheme of $\SL_{2,k}=\Spec k[a,b,c,d]$ given by $ab=cd=0$ 
        for $n\geq2$. 
        \item $\Norm_{\SL_2}(\BD_n^*)=\BD_{2n}$ for $n\geq3$ and $\cha k\neq2$.
        \item $\Norm_{\SL_2}(\BD_2)=\BO$ for $\cha k\neq2$.
        \item $\Norm_{\SL_2}(\BT^*)=\Norm_{\SL_2}(\BO)=\BO$ for $\cha k\neq2,3$.
        \item $\Norm_{\SL_2}(\BI)=\BI$ for $\cha k\neq2,3,5$.
    \end{enumerate}
\end{Proposition}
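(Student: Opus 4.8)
The plan is as follows. Assertion~(3) is precisely the definition $\BO:=\Norm_{\SL_{2,k}}(\BD_2)$, so there is nothing to prove there. For the remaining four assertions I reduce to the case $k=\overline{k}$. By \cite[Section~2.1]{Conrad} the normalizer $\Norm_{\SL_{2,k}}(G)$ is a closed subgroup scheme of $\SL_{2,k}$ whose formation commutes with the faithfully flat base change $k\to\overline{k}$, so $\Norm_{\SL_{2,k}}(G)_{\overline{k}}=\Norm_{\SL_{2,\overline{k}}}(G_{\overline{k}})$; and two closed subschemes of $\SL_{2,k}$ with the same base change to $\overline{k}$ coincide. Since the right-hand sides in~(1),~(2),~(4),~(5) base change as expected — recall from the proof of Theorem~\ref{thm:gp-sch-over-k-main-features} that $\BD_{2n,\overline{k}}=\BD_{2n,\overline{k}}^{*}$, that $\BO_{\overline{k}}$ is the classical binary octahedral group by Lemma~\ref{l:BO-etale}, and that $\BT_{\overline{k}}^{*}$ and $\BI_{\overline{k}}$ are, up to conjugacy, the classical binary tetrahedral and icosahedral groups — it suffices to prove the proposition over $\overline{k}$, which I assume henceforth.

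For~(1), I take $n\geq 3$ (if $n=2$ the image of $\bmu_2$ is central, so its normalizer equals $\SL_2$). If $A=\begin{bmatrix}a&b\\c&d\end{bmatrix}$ is an $R$-point of $\Norm_{\SL_2}(\bmu_n)$ and $u$ is the tautological point of $\bmu_n$, then requiring $A\,\diag(u,u^{-1})\,A^{-1}$ to be diagonal of the form $\diag(v,v^{-1})$ yields $ab(u^{2}-1)=cd(u^{2}-1)=0$ in $R[u]/(u^{n}-1)$, and since $1$ and $u^{2}$ are distinct basis vectors for $n\geq 3$ this forces $ab=cd=0$. Thus $\Norm_{\SL_2}(\bmu_n)\subseteq V(ab,cd)$, while conversely $V(ab,cd)\cap\SL_2$ is the union $T\sqcup wT$ of the diagonal torus $T$ and its coset by $w=\begin{bmatrix}0&1\\-1&0\end{bmatrix}$, which normalizes the diagonal $\bmu_n$ (conjugation by $T$ fixes it, by $w$ inverts it); so equality holds. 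For~(2), with $n\geq 3$, the subgroup scheme $\bmu_{2n}=\BD_n^{*}\cap T$ is characteristic in $\BD_n^{*}$ — it is the unique cyclic subgroup of order $2n$ when $\cha k\nmid 2n$, the remaining case being handled as in \cite[Section~4]{LS} — so~(1) gives $\Norm_{\SL_2}(\BD_n^{*})\subseteq\Norm_{\SL_2}(\bmu_{2n})=T\sqcup wT$. A short computation on $T\sqcup wT$ then shows that $\diag(\lambda,\lambda^{-1})$ conjugates $w$ to $\begin{bmatrix}0&\lambda^{2}\\-\lambda^{-2}&0\end{bmatrix}$, which lies in $\BD_n^{*}$ precisely when $\lambda^{4n}=1$, and that (since $w\in\BD_n^{*}$) the coset $wT$ contributes the same condition; hence $\Norm_{\SL_2}(\BD_n^{*})=\bmu_{4n}\sqcup\bmu_{4n}w=\BD_{2n}^{*}=\BD_{2n}$.

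For~(4) and~(5), let $G\in\{\BT^{*},\BO,\BI\}$; over $\overline{k}$ this is, up to conjugacy, the classical binary tetrahedral, octahedral, or icosahedral group, which is finite étale of order prime to $\cha k$, and the embedding $\rho$ restricts to an absolutely irreducible representation of $G$. Hence $\Cent_{\SL_2}(G)$ is the central $\bmu_2$ by Schur's lemma, so conjugation gives an injection $\Norm_{\SL_2}(G)/\bmu_2\hookrightarrow\Aut(G)$; thus $\Norm_{\SL_2}(G)$ is finite of order at most $2\,|\Aut(G)|$, which is prime to $\cha k$ by the standing hypotheses, and it is étale (its identity component centralizes the étale group $G$, hence lies in the étale group $\bmu_2$). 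Therefore $\Norm_{\SL_2}(G)$ is linearly reductive and so appears in Hashimoto's classification \cite{Hashimoto}. Now $\BT^{*}\triangleleft\BO$ with index $2$ (by the definition of $\BT^{*}$) gives $\BO\subseteq\Norm_{\SL_2}(\BT^{*})$, and $|\Aut(\BT^{*})|=24$ forces $|\Norm_{\SL_2}(\BT^{*})|\leq 48=|\BO|$, whence $\Norm_{\SL_2}(\BT^{*})=\BO$; likewise $|\Aut(\BO)|=24$ together with $\BO\subseteq\Norm_{\SL_2}(\BO)$ gives $\Norm_{\SL_2}(\BO)=\BO$; and since $\BI$ is perfect of order $120$, the only member of Hashimoto's list of order $\geq 120$ containing $\BI$ is $\BI$ itself (the cyclic and binary dihedral groups being solvable), so $\Norm_{\SL_2}(\BI)=\BI$.

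The step I expect to demand the most care is the reduction to $\overline{k}$: one must check that the finite-presentation hypotheses of \cite[Section~2.1]{Conrad} apply so that the normalizer commutes with $k\to\overline{k}$, and that the explicit embeddings of our group schemes are matched correctly against Hashimoto's classification over $\overline{k}$. A further subtlety is the ``characteristic subgroup'' claim in~(2) when $\cha k\mid n$, where $\BD_n^{*}$ is no longer étale; and one should make sure the pointwise group-theoretic input — orders of automorphism groups, uniqueness of the cyclic subgroup of index $2$, and which of Hashimoto's groups contain $\BI$ — is upgraded to scheme-theoretic statements, which is harmless here since the groups relevant to~(4) and~(5) are étale over $\overline{k}$ and the computations behind~(1) and~(2) are explicit.
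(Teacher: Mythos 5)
The central gap is in your treatment of $\Norm_{\SL_2}(\BO)=\BO$ in (4): your order count uses $|\Aut(\BO(\overline{k}))|=24$, but the binary octahedral group has a nontrivial outer automorphism, so $|\Aut(\BO(\overline{k}))|=48$. Indeed, if $\eta\colon\BO(\overline{k})\to\{\pm1\}$ is the unique nontrivial character (pullback of the sign of $\Sym_4$, with kernel $\BT^*(\overline{k})$), then $g\mapsto\eta(g)g$ is an automorphism; it is not inner, since any element inducing it would centralize $\BT^*(\overline{k})$, whose centralizer in $\BO(\overline{k})$ is $\{\pm1\}$, whereas the map sends each order-$8$ element $g$ to $-g\neq g$. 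So $\Out(\BO(\overline{k}))\cong\ZZ/2$, your bound only gives $|\Norm_{\SL_2}(\BO)|\leq 2\cdot 48=96$, and together with $\BO\subseteq\Norm_{\SL_2}(\BO)$ this does not force equality. The conclusion is true, but you need an extra input ruling out the outer automorphism: for instance, conjugation inside $\SL_2$ preserves the trace of the embedding $\rho$, while $g\mapsto\eta(g)g$ changes the trace of the order-$8$ elements from $\pm\sqrt2$ to $\mp\sqrt2$; alternatively argue, as the paper does, that every automorphism preserves $\eta$, hence preserves $\BD_2(\overline{k})$ (the order-$\leq4$ locus on which $\eta=1$), giving $\Norm_{\SL_2}(\BO)\subseteq\Norm_{\SL_2}(\BD_2)=\BO$. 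By contrast, your counts for $\BT^*$ (where $\Aut(\BT^*(\overline{k}))\cong\Sym_4$ has order $24$) and your argument for $\BI$ (\'etaleness via Schur, linear reductivity, then Hashimoto's list plus solvability of $\bmu_n$ and the binary dihedral groups) are sound; the latter is a genuinely different and arguably cleaner route than the paper's, which checks that the induced automorphism is class-preserving and then invokes \cite{BM14}.

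Two secondary points. In (2), the assertion that $\bmu_{2n}$ is characteristic in $\BD_n^*$ is stated but not proved, and \cite[Section 4]{LS} does not contain it; you need it scheme-theoretically (for arbitrary $R$-points of the normalizer), and in particular when $\cha k\mid n$, where $\BD_n^*$ is not \'etale and ``unique cyclic subgroup of order $2n$'' has no direct meaning. This is fillable — for $\cha k\mid n$ the identity component of $\BD_n^*$ is an infinitesimal $\bmu_{p^r}$ with $p^r\geq 3$, it is preserved by any automorphism, and its centralizer in $\BD_n^*$ is exactly $\bmu_{2n}$; for $\cha k\nmid 2n$ one still has to upgrade the pointwise uniqueness statement to one about arbitrary $R$-points, which works because everything in sight is finite \'etale — but as written it is a gap, and the paper avoids the issue entirely by a direct computation with the universal point of $\BD_n^*$. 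Finally, your parenthetical in (1) is correct that for $n=2$ the image of $\bmu_2$ is central, so its normalizer is all of $\SL_2$; this means assertion (1) should really be read with $n\geq3$ (which is all that is used later, and is also what the paper's own linear-independence argument requires), so no harm is done, but note that what you prove for $n=2$ is not the statement as printed.
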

\begin{proof}

First, a direct check shows that matrices of the form given in 
(\ref{Nmun-computation}) are in $\Norm_{\SL_2}(\bmu_n)$. 
Conversely, let $R$ be a $k$-algebra and let
$$
A\,:=\,\begin{bmatrix}
    a&b\\ c&d
\end{bmatrix}\,\in\, \Norm_{\SL_2}(\bmu_n)(R).
$$ 
Then, consider the matrix 
$$
B\,:=\,\begin{bmatrix}
    u&0\\ 0& u^{n-1}
\end{bmatrix}
\,\in\, \bmu_n\left(R[u]/(u^n-1)\right).
$$ 
Then 
$$
ABA^{-1} \,=\,\begin{bmatrix}
    uad-u^{n-1}bc & ab(u^{n-1}-u)\\
    cd(u-u^{n-1}) & u^{n-1}ad-ubc
\end{bmatrix}
\,\in\,\bmu_n(R[u]/(u^n-1)).
$$
Thus,
$$
ab(u^{n-1}-u) \,=\,cd(u-u^{n-1}) \,=\,0
$$
and since $u$ and $u^{n-1}$ are linearly independent over $R$ in $R[u]/(u^n-1)$, 
we find $ab=cd=0$.

We now prove $\Norm_{\SL_2}(\BD_n^*)=\BD_{2n}$ for $n\geq3$. 
A straightforward computation shows 
$\BD_{2n}(R)\subseteq \Norm_{\SL_2}(\BD_n^*)(R)$. 
Conversely, let 
$$
A\,:=\,\begin{bmatrix}
    a&b\\ c&d
\end{bmatrix} \,\in\, \Norm_{\SL_2}(\BD_n^*)(R).
$$ 
Then consider the matrix 
$$
B \,:=\,\begin{bmatrix}
    \alpha&\beta\\ -\beta^{2n-1}&\alpha^{2n-1}
\end{bmatrix} \,\in\, 
\BD_n^*\left(R[\alpha,\beta]/(\alpha\beta,\alpha^{2n}+\beta^{2n}-1)\right).
$$ 
By assumption, 
$$
\begin{bmatrix}
    \alpha'&\beta'\\\gamma'&\delta'
\end{bmatrix} \,:=\,
ABA^{-1} \,\in\, 
\BD_n^*\left(R[\alpha,\beta]/(\alpha\beta,\alpha^{2n}+\beta^{2n}-1)\right).
$$

Since $\alpha'\beta'=0$, expanding and using $n\geq3$, 
we see the $\alpha^2$ and $\alpha^{2n-2}$ coefficients are respectively 
$a^2bd$ and $ab^2c$. 
Thus, these must vanish. 
Multiplying $ad-bc=1$ by $ab$, we find $ab=0$. 
Similarly, considering the $\alpha^2$ and $\alpha^{2n-2}$ 
coefficients of $\gamma'\delta'$, we find $cd=0$.

We further require $-\gamma'=(\beta')^{2n-1}$, which yields
$$
c^2\beta+d^2\beta^{2n-1} \,=\,
(a^2\beta+b^2\beta^{2n-1})^{2n-1} \,=\,
a^{4n-2}\beta^{2n-1}+b^{4n-2}\beta.
$$
Since $n\geq3$, we have $2n-1\neq 1$, and so,
$$
d^2 \,=\,a^{4n-2},\quad c^2\,=\,b^{4n-2}.
$$
We thus find
$$
a^{4n-1} \,=\, ad^2\,=\,d(ad-bc)\,=\,d\quad\textrm{and} 
-b^{4n-1}\,=\,-bc^2\,=\,c(ad-bc)\,=\,c.
$$
Hence, $A\in\BD_{2n}(R)$.

We now prove $\Norm_{\SL_2}(\BT^*)=\BO$. 
We have $\cha k\neq2$, so $\BD_2$ is \'etale. 
Note that all $R$-valued points of $\BD_2$ have order dividing $4$, 
whereas $\BT^*(R)\setminus\BD_2(R)$ elements have order dividing $6$; 
indeed, this can be verified on $\overline{k}$-points in a manner analogous 
to the proof of Lemma \ref{l:BO-basics}. 
Since the action of $\Norm_{\SL_2}(\BT^*)(R)$ on $\BT^*(R)$ 
preserves the set of elements whose order divides $4$, 
we see it takes $\BD_2(R)$ to itself, that is, 
$\Norm_{\SL_2}(\BT^*)\subset N(\BD_2)=:\BO$. 
Now, since $\BT^*$ is normal in $\BO$, so the action of $\BO$ on 
$\BO$ by conjugation takes $\BT^*$ to itself. 
Hence, $\BO\subset \Norm_{\SL_2}(\BT^*)$ 
and so, $\Norm_{\SL_2}(\BT^*)=\BO$.

To prove $\Norm_{\SL_2}(\BO)=\BO$, we first observe 
$\BO\subseteq \Norm_{\SL_2}(\BO)$. 
To verify this is an equality, we may base change to $\overline{k}$. 
Now $\BO(\overline{k})$ is the classical binary octahedral group.
Aside from the elements of $\BD_2(\overline{k})$, the only other elements 
whose orders divide $4$ are given by a single conjugacy class 
$\mathfrak{c}$ of elements of order precisely $4$. 
We show any automorphism of $\BO(\overline{k})$ (for example, the action of 
$\Norm_{\SL_2}(\BO(\overline{k}))$)
must send $\BD_2(\overline{k})$ to itself and so 
$\Norm_{\SL_2}(\BO)\subset \Norm_{\SL_2}(\BD_2)=\BO$. 
To do so, we need only check that the two conjugacy classes 
$\mathfrak{c}$ and $\mathfrak{c}'$ of order $4$ elements are sent 
to themselves. 
There is one non-trivial $1$-dimensional representation 
(denoted by $\eta$ in the proof of 
Theorem \ref{thm:gp-sch-over-k-main-features}, assertion (\ref{item-irrep})) 
given by 
$\BO(\overline{k})\to\BO(\overline{k})/\BT^*(\overline{k})=\bmu_2(\overline{k})$. 
Thus, any automorphism of $\BO(\overline{k})$ must preserve this non-trivial character.
But one checks that $\mathfrak{c}$ and $\mathfrak{c}'$ 
have different values under this character.

Since $\BI\subset \Norm_{\SL_2}(\BI)$, to prove equality, 
we may base change to $\overline{k}$. 
Now, suppose $A\in \Norm_{\SL_2}(\BI)(\overline{k})$. 
Then, conjugation by $A$ yields an automorphism $\phi$ of $\BI(\overline{k})$. 
We claim it is enough to prove $\phi$ preserves conjugacy classes. 
Indeed, by \cite[Table 9]{BM14}, 
if $\phi$ preserves conjugacy classes, it is necessarily inner so there exists 
$g\in\BI(\overline{k})$ such that conjugation by $\rho(g)^{-1}A$ 
acts as the identity element on $\rho(\BI(\overline{k}))$. 
With notation as in Proposition \ref{prop:BI-descends}, 
considering the action on $\rho(t)$ shows $\rho(g)^{-1}A$ is diagonal, 
and considering the action on $\rho(r)$ shows $\rho(g)^{-1}A=\pm1$, 
hence $A\in\BI(\overline{k})$. 
It therefore remains to prove $\phi$ preserves conjugacy classes. 
For this, note that all conjugacy classes are determined by the orders 
of the elements in the conjugacy class aside from those of order $5$ 
(which has two distinct conjugacy classes given by $t^2$ and $t^4$) 
and those of order $10$ (given by $t$ and $t^3$).
Thus, if $\phi$ does not preserve conjugacy classes then there exists 
$g\in\BI(\overline{k})$ such that $A\rho(t^i)A^{-1}=\rho(gt^{i\pm2}g^{-1})$ 
for some $i$. 
Rearranging, we have $\rho(g)^{-1}A\rho(t^i)=\rho(t^{i\pm2})\rho(g)^{-1}A$, 
and an easy computation shows there is no matrix $B$ in $\SL_2(\overline{k})$ 
for which $B\rho(t^i)=\rho(t^{i\pm2})B$.
\end{proof}

\section{Normalizers and the McKay graph:~Theorem \ref{thm:NGmodGAutMcKay}}

Let $\rho\colon G\to\SL_{2,k}$ be the inclusion of a finite linearly reductive subgroup 
scheme over a field $k$.  
Via the map $\Norm_{\SL_2}(G)(\overline{k})\to\AutScheme_G(\overline{k})$, 
we see that $\Norm_{\SL_2}(G)(\overline{k})$ 
acts on the (isomorphism classes of) 
irreducible representations of $G_{\overline{k}}$. 
Furthermore, by construction, this action fixes $\rho_{\overline{k}}$ 
(up to equivalence), so we obtain an action on the McKay graph of 
$\rho_{\overline{k}}$, that is, we have a homomorphism of groups
$$
\Norm_{\SL_2}(G)(\overline{k}) \,\to\,
\Aut\left(\Gamma(G_{\overline{k}},\rho_{\overline{k}})\right).
$$
Note that $G(\overline{k})$ is in the kernel since if $h\in G(\overline{k})$ 
and $\rho_i\colon G_{\overline{k}}\to\GL_{n,\overline{k}}$ 
is any representation, then the automorphism 
$\varphi_h$ given by conjugation by $h$ satisfies, 
for all $\overline{k}$-algebras $R$ and all $g\in G(R)$,
$\rho_i(\varphi_h(g))=\rho_i(hgh^{-1})=\rho_i(h)\rho_i(g)\rho_i(h)^{-1}$, 
so $\rho_i$ is equivalent to $\rho_i\varphi_h$. 
Thus, we find a homomorphism of groups
$$
\left(\Norm_{\SL_2}(G)/G\right)(\overline{k})
\,=\,
\Norm_{\SL_2}(G)(\overline{k})/G(\overline{k})
\,\to\,
\Aut\left(\Gamma(G_{\overline{k}},\rho_{\overline{k}})\right).
$$

The main goal of this subsection is to prove Theorem \ref{thm:NGmodGAutMcKay}, 
which shows that this map is an isomorphism in our cases of interest. 
In fact, we prove a slightly more general version of Theorem \ref{thm:NGmodGAutMcKay}.

\begin{Theorem}\label{thm:NGmodGAutMcKay-more-general}
    Let $k$ be a field and let $G\subset\SL_{2,k}$ with 
    \begin{enumerate}
        \item $(G,\cha k)$ as in \eqref{eqn:ADE-over-perfect-k} of
        Theorem \ref{thm:gp-sch-over-k-main-features}, 
        or
        \item $G=\BD_n^*$ with $\cha k\neq2$, or
        \item $G=\BT^*$ with $\cha k\neq2,3$.
    \end{enumerate}
    Then, $\Norm_{\SL_{2,k}}(G)/G$ is a finite constant group scheme and 
    we have a natural isomorphism 
    \begin{equation}\label{eqn:NmodG-McKay}
        \Norm_{\SL_{2,k}}(G)/G
        \quad\xrightarrow{\simeq}\quad
        \Aut\left(\Gamma(G_{\overline{k}},\rho_{\overline{k}})\right).
    \end{equation}
\end{Theorem}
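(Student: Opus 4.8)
The plan is to verify the isomorphism \eqref{eqn:NmodG-McKay} case-by-case, using the explicit normalizer computations of Proposition \ref{prop:NGComputation} together with the known automorphism groups of the affine Dynkin diagrams. First I would record, for each $G$ in the list, the target group $\Aut(\Gamma(G_{\overline{k}},\rho_{\overline{k}}))$: the finite Dynkin diagram $\Gamma$ is $A_{n-1}$ for $\bmu_n$, $D_{n+2}$ for $\BD_n$ (and $\BD_n^*$), $E_6$ for $\BT$ (and $\BT^*$), $E_7$ for $\BO$, $E_8$ for $\BI$, so the respective automorphism groups are $\ZZ/2$ (for $A_{n-1}$, $n\geq3$; trivial for $n\le 2$), $\ZZ/2$ for $D_{n+2}$ with $n\ge 3$ and $\Sym_3$ for $D_4$, $\ZZ/2$ for $E_6$, and the trivial group for $E_7$ and $E_8$. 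Then I would match these against $\Norm_{\SL_2}(G)/G$: from Proposition \ref{prop:NGComputation} we get $\Norm_{\SL_2}(\BI)/\BI$ trivial and $\Norm_{\SL_2}(\BO)/\BO$ trivial, matching $E_8$ and $E_7$; $\Norm_{\SL_2}(\BT^*)/\BT^* = \BO/\BT^*\cong\ZZ/2$ and likewise $\Norm_{\SL_2}(\BT)/\BT$ (after base change to $\overline{k}$, using that $\BT$ is conjugate to $\BT^*$) matching $E_6$; $\Norm_{\SL_2}(\BD_n^*)/\BD_n^* = \BD_{2n}/\BD_n^*$, which over $\overline{k}$ has order $2$, matching $D_{n+2}$ for $n\ge 3$; and $\Norm_{\SL_2}(\bmu_n)/\bmu_n$, matching $A_{n-1}$.

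The subtle points are three. First, $\Norm_{\SL_2}(G)/G$ must be shown to be a \emph{constant} group scheme; since $\cha k$ is prime to $|G_{\overline{k}}|$ in every non-trivial case, $G$ is \'etale and one checks $\Norm_{\SL_2}(G)$ is \'etale (as in Lemma \ref{l:BO-etale}, or directly from Proposition \ref{prop:NGComputation}), hence so is the quotient; constancy then follows because the Galois action on $\overline{k}$-points is trivial — this is where I would invoke that the construction of $G$ was arranged precisely so that the given embedding $\rho$ is Galois-equivariant, so Galois fixes every relevant matrix up to $G(\overline{k})$. Second, for $\bmu_n$ one must actually identify $\Norm_{\SL_2}(\bmu_n)/\bmu_n$: Proposition \ref{prop:NGComputation}(1) says $\Norm_{\SL_2}(\bmu_n)$ is cut out by $ab=cd=0$, which over $\overline{k}$ consists of the diagonal torus $\Torus$ together with the anti-diagonal coset $\begin{bmatrix} 0 & b \\ -b^{-1} & 0\end{bmatrix}$; the diagonal torus normalizes $\bmu_n$ and acts trivially on representations, the anti-diagonal element swaps $u^i\leftrightarrow u^{-i}$, so the quotient is $\ZZ/2$ for $n\ge 3$ (trivial for $n\le 2$), agreeing with $\Aut(A_{n-1})$. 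I should be careful with the small cases $n=1,2$ where $A_{n-1}$ is empty or a point; there the statement is vacuous or trivial.

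Third, and this is the main obstacle: I must show the natural map $(\Norm_{\SL_2}(G)/G)(\overline{k})\to\Aut(\Gamma(G_{\overline{k}},\rho_{\overline{k}}))$ — which the paragraph preceding the theorem has already constructed — is not merely a map between groups of equal (and small) order, but is actually \emph{injective}, hence an isomorphism; once injectivity over $\overline{k}$ and the cardinality match are established, bijectivity over $\overline{k}$ follows, and then Galois-equivariance (which I will have noted above) upgrades it to an isomorphism of constant group schemes over $k$. For injectivity: suppose $A\in\Norm_{\SL_2}(G)(\overline{k})$ acts trivially on all irreducible representations of $G_{\overline{k}}$, i.e.\ for every irreducible $\rho_i$ the representation $\rho_i\circ\varphi_A$ is equivalent to $\rho_i$. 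Applying this to $\rho$ itself: $\rho\circ\varphi_A\cong\rho$ means there is $C\in\GL_2(\overline{k})$ with $CA\,\rho(g)\,A^{-1}C^{-1}=\rho(g)$ for all $g$, so $C^{-1}A^{-1}\cdot(\textrm{something})$... more cleanly, $\varphi_A$ is an automorphism of $G_{\overline{k}}$ that becomes inner after composing with $\rho$, and since $\rho$ is faithful and $2$-dimensional, such $\varphi_A$ is realized by conjugation by some $B\in\GL_2(\overline{k})$ commuting appropriately; then $B^{-1}A$ centralizes $\rho(G_{\overline{k}})$, and because $G_{\overline{k}}$ acts irreducibly on $\overline{k}^2$ (it is one of the classical binary groups / $\bmu_n$, all of which are irreducible or, for $\bmu_n$, at least have centralizer exactly the diagonal torus) Schur's lemma forces $B^{-1}A$ to be scalar, hence $A\in B\cdot\overline{k}^\times$; combined with $\det A = 1$ and $B$ normalizing, one concludes $A\in G(\overline{k})\cdot(\textrm{central scalars in }\SL_2)$, and the central scalars lie in $G$ in every case (each $G_{\overline{k}}$ contains $-1$ except $\bmu_n$ with $n$ odd, and $\bmu_n$ contains $\pm 1$ when $n$ is even — for $n$ odd the only central scalar in $\SL_2$ is $-1\notin\bmu_n$ but then one checks the argument still lands $A$ in the torus part which maps to the identity of $\Aut(A_{n-1})$). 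So $A$ maps to the trivial element of $\Norm_{\SL_2}(G)(\overline{k})/G(\overline{k})$, giving injectivity. The delicate bookkeeping is exactly this last scalar/centralizer analysis, done uniformly; for $\BD_n^*$ and $\BT^*$ it goes through identically since over $\overline{k}$ these are conjugate to $\BD_n$ and $\BT$. Assembling: injective $+$ equal finite cardinality $\Rightarrow$ isomorphism over $\overline{k}$, and Galois-equivariance of the whole construction $\Rightarrow$ the claimed isomorphism of constant group schemes over $k$.
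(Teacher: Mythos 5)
Your overall skeleton (case-by-case comparison of $\Norm_{\SL_2}(G)/G$ with $\Aut(\Gamma)$, using Proposition \ref{prop:NGComputation}) matches the paper, but the step you flag as the main obstacle --- injectivity --- is where your argument actually fails. The condition you extract from the vertex $\rho$, namely $\rho\circ\varphi_A\cong\rho$, holds \emph{automatically} for every $A\in\Norm_{\SL_2}(G)(\overline{k})$, with $A$ itself serving as the intertwiner; so when you invoke Schur's lemma you only learn that your chosen intertwiner differs from $A$ by a scalar, which gives no constraint on $A$ whatsoever. What injectivity really needs is that any $A\in\Norm_{\SL_2}(G)(\overline{k})\setminus G(\overline{k})$ moves some \emph{other} vertex of the graph; equivalently, that an automorphism of $G(\overline{k})$ fixing every irreducible character is inner (``class-preserving implies inner''), plus the scalar analysis you describe. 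That implication is a genuinely nontrivial group-theoretic input and is exactly the point your sketch glosses over. The paper sidesteps the abstract statement and instead verifies the needed nontriviality by explicit computation on $1$-dimensional characters: for $\BD_n^*$ with $n\geq3$ the nontrivial coset, represented by $\diag(\zeta_{4n},\zeta_{4n}^{-1})$, swaps the two characters $(a,b)\mapsto a^n\pm(ib)^n$; for $\BD_2$, Proposition \ref{prop:BD2-1d-irreps} shows the $\BO$-action on $\psi^+,\psi^-,\psi^0$ has kernel exactly $\BD_2$ and image all of $\Sym_3$; for $\BT^*$ a generator of $\bmu_8(\overline{k})$ swaps $\varphi^\pm$; for $\BO$ and $\BI$ both sides are trivial. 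Some such concrete verification (or a citation of class-preservation results as in \cite{BM14}) must replace your Schur argument.

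Two further claims in your write-up are incorrect as stated. First, you cannot treat $\bmu_n$ the way you do: $\Norm_{\SL_2}(\bmu_n)$ contains the entire diagonal torus, so $\Norm_{\SL_2}(\bmu_n)/\bmu_n$ has identity component $\GG_m$ and is not a finite constant group scheme; your conclusion that this quotient ``is $\ZZ/2$'' conflates the quotient with its image in $\Aut(A_{n-1})$. The paper's proof tacitly omits $\bmu_n$, and in the later section on types $A$ and $B$ it explicitly notes that $\Norm_{\SL_2}(\bmu_n)/\bmu_n$ is not constant, handling that case by a separate direct argument. Second, your constancy argument rests on ``$\cha k$ prime to $|G_{\overline{k}}|$, hence $G$ \'etale,'' which is false for $\BD_n^*$ (and $\BD_n$) when $p\mid n$, since only $p\neq2$ is assumed; the paper instead obtains constancy from $\BD_{2n}^*/\BD_n^*\cong\bmu_{4n}/\bmu_{2n}\cong\bmu_2\cong\ZZ/2$ (valid for all $p\neq2$), and for the twisted forms $\BD_{2n+1}$ and $\BT$ from the observation that $\ZZ/2$ admits no nontrivial forms --- your appeal to Galois fixing ``every relevant matrix up to $G(\overline{k})$'' is precisely what would need proof there.
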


\begin{proof}
We begin by showing $\Norm_{\SL_2}(G)/G$ is a constant group scheme. 
When $G=\BD_n^*$ with $n\geq3$, we have 
$$
\Norm_{\SL_2}(\BD_n^*)/\BD_n^* \,=\,
\BD_{2n}^*/\BD_n^* \,\xleftarrow{\cong}\,
\bmu_{4n}/\bmu_{2n} \,\cong\, \bmu_2\,\cong\,\ZZ/2,
$$
where the rightmost isomorphism uses that $\cha k\neq2$. 
For $G=\BD_2$, we have a morphism $\Norm_{\SL_2}(\BD_2)=\BO\to\Sym_3$. 
Proposition \ref{prop:BD2-1d-irreps} shows the kernel is $\BD_2$, 
so we know $\BO/\BD_2\cong\Sym_3$. 
For $G\in\{\BO,\BI\}$, we know $\Norm_{\SL_2}(G)/G$ is trivial, hence constant. 
Lastly, for $G=\BT^*$, we know $\Norm_{\SL_2}(\BT^*)/\BT^*=\BO/\BT^*$ 
is \'etale since $\BO$ and $\BT^*$ are.
Moreover, over $\overline{k}$, the quotient is the group $\ZZ/2$. 
Thus, $\Norm_{\SL_2}(\BT^*)/\BT^*$ is a twisted form of $\ZZ/2$.
But $\ZZ/2$ has no non-trivial twisted forms since $\Aut(\ZZ/2)$ is trivial.

To handle the case of $G\in\{\BD_{2n+1},\BT\}$, by construction, 
there is a Galois extension $L/k$ with $G_L=A G^*_L A^{-1}$ 
for some $A\in\SL_2(L)$. 
Then, we see 
$(\Norm_{\SL_{2,k}}(G))_L=\Norm_{\SL_{2,L}}(G_L)=A \Norm_{\SL_{2,L}}(G^*_L) A^{-1}$. 
Thus, the inclusion $G\subseteq\Norm_{\SL_2}(G)$ is a Galois twist of
$G^*\subseteq\Norm_{\SL_2}(G^*)$. 
It follows that $\Norm_{\SL_2}(G)/G$ is a twist of 
$\Norm_{\SL_2}(G^*)/G^*\cong\ZZ/2$. But $\ZZ/2$ has no non-trivial twists, so $\Norm_{\SL_2}(G)/G\cong\ZZ/2$ as well.

Having now established $\Norm_{\SL_2}(G)/G$ is a constant group scheme, 
we may pass to $\overline{k}$ and construct our natural isomorphism with
$\Aut(\Gamma(G_{\overline{k}},\rho_{\overline{k}}))$. 
Note that the cases $G\in\{\BD_{2n+1},\BT\}$ will follow from the case of 
$G^*$ since $G_{\overline{k}}$ is isomorphic to $G^*_{\overline{k}}$.

First consider the case $G=\BD_n^*$ with $n\geq3$ and $\cha k\neq2$. 
By Proposition \ref{prop:NGComputation}, $\Norm_{\SL_2}(\BD_n^*)=\BD_{2n}^*$. 
The natural maps
$$
\BD_{2n}^*/\BD_n^* \,\xleftarrow{\cong}\,
\bmu_{4n}/\bmu_{2n} \,\cong\,\bmu_2 \,\cong\, \ZZ/2
$$
are isomorphisms, where the rightmost isomorphism uses that $\cha k\neq2$. 
Thus, we see $\BD_{2n}^*/\BD_n^*$ has precisely one non-trivial 
$\overline{k}$-point given by the class of $(\alpha,0)$ 
for any $\alpha\in\bmu_{4n}(\overline{k})\setminus\bmu_{2n}(\overline{k})$.
Explicitly conjugating the matrix $\rho((a,b))$ by the matrix $\rho((\alpha,0))$ 
shows $(\BD_{2n}^*/\BD_n^*)(\overline{k})$ acts on $\BD_n^*$ 
by $(a,b)\mapsto(a,\alpha^2 b)$. 
Since $\cha k\neq2$, we know $\bmu_4(\overline{k})\simeq\ZZ/4$.
Let $i$ be a primitive generator. 
As in the proof of Theorem \ref{thm:gp-sch-over-k-main-features}, assertion 
(\ref{item-irrep}), we have representations
$$
\varphi^\pm\quad\colon\quad
\BD_{n,\overline{k}}^* \,\to\,
\bmu_{4,\overline{k}},\quad (a,b)\,\mapsto\, a^n\pm (ib)^n.
$$
If $n$ is even, then the image lands in $\bmu_{2,\overline{k}}$. 
We see
$$
\varphi^\pm(a,\alpha^2 b)
\,=\,
a^n \pm\alpha^{2n}(ib)^n\,=\,a^n\mp(ib)^n \,=\,\varphi^\mp(a,b).
$$
Thus, the unique non-trivial element of $(\BD_{2n}^*/\BD_n^*)(\overline{k})$ 
maps to a non-trivial element of 
$\Aut(\Gamma(\BD^*_{n,\overline{k}},\rho_{\overline{k}}))$. 
Furthermore, \cite[Corollary 2.11]{LRQ} shows there is a canonical bijection 
between the irreducible representations $\BD_n^*$ and the complex representations 
of the classical binary dihedral group, respecting tensor products. 
Hence, $\Gamma(\BD^*_{n,\overline{k}},\rho_{\overline{k}})$ is 
isomorphic to the affine Dynkin diagram $\widetilde{D}_{n+2}$ 
with the vertex corresponding to the trivial representation removed, 
that is, $D_{n+2}$, so its automorphism group is $\ZZ/2$. 
As a result, \eqref{eqn:NmodG-McKay} is an isomorphism when $G=\BD_n^*$ for 
$n\geq3$.

We now consider $\BD_2$. 
By the proof of Theorem \ref{thm:gp-sch-over-k-main-features},
assertion (\ref{item-irrep}), 
$\BD_{2,\overline{k}}$ has $4$ non-trivial irreducible representations 
obtained as the base changes to $\overline{k}$ of $\rho$,
as well as the characters $\psi^\pm$ and $\psi^0$ from 
Proposition \ref{prop:BD2-1d-irreps}. 
By \cite[Corollary 2.11]{LRQ}, the McKay graph with the trivial representation 
removed has three edges:~those connect $\rho_{\overline{k}}$ to each of 
$\psi^\pm_{\overline{k}}$ and $\psi^0_{\overline{k}}$.
Thus, $\Aut(\Gamma(G_{\overline{k}},\rho_{\overline{k}}))\cong\Sym_3$ 
given by the permutation action on the three characters. 
On the other hand, Proposition \ref{prop:BD2-1d-irreps} shows that the 
permutation action of 
$\BO_{\overline{k}}:=\Norm_{\SL_2}(\BD_{2,\overline{k}})$ 
on the characters is transitive with kernel $\BD_2(\overline{k})$.
Transitivity follows from the fact that 
$\BD_4(\overline{k}) \setminus \BD_2(\overline{k})$ and 
$\frac{1}{\sqrt{2}}\begin{bmatrix}
    1 & 1\\
    -1 & 1
\end{bmatrix}$ 
act as distinct transpositions and so, they generate $\Sym_3$. 
Thus, \eqref{eqn:NmodG-McKay} is an isomorphism.

Next, by the definition of $\BT^*$ in \S\ref{subsec:def-our-gp-schs}, 
we have a map $\varphi^+\colon\BT^*\to\ZZ/3$ where $\ZZ/3$ is the kernel of 
the map $\Sym_3\xrightarrow{\textrm{sign}} \ZZ/2$. 
Post-composing $\varphi^+$ with the non-trivial automorphism of $\ZZ/3$, 
we obtain another map $\varphi^-\colon\BT^*\to\ZZ/3$. 
Since we are assuming $\cha k\neq3$, we may identify $\ZZ/3$ and 
$\bmu_3(\overline{k})$. 
These yield the two non-trivial characters of $\BT^*(\overline{k})$, 
see Proposition \ref{prop:def-BT} and the proof of 
Theorem \ref{thm:gp-sch-over-k-main-features}, assertion (\ref{item-irrep}). 
Now, \cite[Corollary 2.11]{LRQ} and Proposition \ref{prop:NGComputation} 
show that both the source and target of \eqref{eqn:NmodG-McKay} are isomorphic 
to $\ZZ/2$.
Furthermore, Proposition \ref{prop:BD2-1d-irreps} shows any generator of 
$\bmu_8(\overline{k})$ (which lives in 
$\BD_4(\overline{k}) \setminus \BD_2(\overline{k})$) 
swaps $\varphi^\pm$. 
Thus, \eqref{eqn:NmodG-McKay} is an isomorphism.

Finally, for $\BO$ and $\BI$, \cite[Corollary 2.11]{LRQ} and 
Proposition \ref{prop:NGComputation} show that both the source and 
target of \eqref{eqn:NmodG-McKay} are trivial.
\end{proof}

The following corollary will be useful in classifying twisted forms of 
the RDP singularities over perfect fields.

\begin{Corollary}\label{cor:twisted-invariants-NGmodG}
Let $\rho\colon G\to\SL_{2,k}$ be the inclusion of a 
non-abelian finite linearly reductive group scheme,
where $k$ is a perfect field.
Then, there exists a finite Galois extension $L/k$, an injection
$$
\iota \,\colon\,\Gal(L/k) \,\hookrightarrow\,
\Aut\left(\Gamma(G_{\overline{k}},\rho_{\overline{k}})\right)
$$
and an isomorphism of $k$-algebras
$$
k[x,y]^G \,\cong\, \left(L[x,y]^H\right)^{\Gal(L/k)},
$$
where the $\Gal(L/k)$-action is the diagonal action on $L[x,y]^H$ given by the Galois-action
on $L$ and conjugation via $\imath$,
where $H\subset\SL_{2,L}$ is one of
$$
\BD_n^*,\quad \BT^*,\quad \BO,\quad \BI,
$$
and $G_{\overline{k}}$ is $\SL_2(\overline{k})$-conjugate to $H_{\overline{k}}$.
\end{Corollary}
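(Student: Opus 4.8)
The plan is to exhibit $k[x,y]^G$ as a \emph{twisted} Galois descent of the invariant ring of one of the ``standard'' group schemes of \S\ref{subsec:def-our-gp-schs}, using Theorem~\ref{thm:gp-sch-over-k-main-features}(\ref{item-conjugation}) to produce the standard model and Theorem~\ref{thm:NGmodGAutMcKay-more-general} to control the field of definition. First, since $G$ is non-abelian and linearly reductive, Theorem~\ref{thm:gp-sch-over-k-main-features}(\ref{item-conjugation}) shows that $G_{\overline{k}}$ is $\SL_2(\overline{k})$-conjugate to $G_{0,\overline{k}}$ for some $G_0\in\{\BD_n,\BT,\BO,\BI\}$; in particular the characteristic restriction attached to $G_0$ in \eqref{eqn:ADE-over-perfect-k} holds (note $k$ is perfect, so $\overline{k}/k$ is Galois). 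As $\BD_{n,\overline{k}}$ (resp.\ $\BT_{\overline{k}}$) is conjugate to $\BD^*_{n,\overline{k}}$ (resp.\ $\BT^*_{\overline{k}}$) by construction, we may choose $H\in\{\BD_n^*,\BT^*,\BO,\BI\}$, a subgroup scheme of $\SL_{2,k}$, together with $A_0\in\SL_2(\overline{k})$ satisfying $A_0H_{\overline{k}}A_0^{-1}=G_{\overline{k}}$.

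\emph{The cocycle and the field $L$.} For $\sigma\in\Gal_k$, applying $\sigma$ to $A_0H_{\overline{k}}A_0^{-1}=G_{\overline{k}}$ and using that $G$ and $H$ are defined over $k$ shows $c_\sigma:=A_0^{-1}\sigma(A_0)$ lies in $N(\overline{k})$, where $N:=\Norm_{\SL_2}(H)$, and that $\sigma\mapsto c_\sigma$ is a continuous $1$-cocycle. By Theorem~\ref{thm:NGmodGAutMcKay-more-general} there is an exact sequence $1\to H\to N\to A\to 1$ with $A:=\Aut(\Gamma(G_{\overline{k}},\rho_{\overline{k}}))$ a finite constant group; composing $c$ with $N(\overline{k})\to A$ gives a homomorphism $\overline\phi\colon\Gal_k\to A$ (the Galois action on $A$ being trivial). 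We take $L:=\overline{k}^{\ker\overline\phi}$, a finite Galois extension of $k$, and let $\iota\colon\Gal(L/k)\hookrightarrow A$ be the resulting injection.

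\emph{Twisted descent.} Since $G$ and $H$ are linearly reductive, formation of invariants commutes with field extension, so $\overline{k}[x,y]^{G_{\overline{k}}}=k[x,y]^G\otimes_k\overline{k}$ and likewise for $H$. Linear substitution by $A_0$ is a $\overline{k}$-algebra automorphism of $\overline{k}[x,y]$ restricting to an isomorphism $\overline{k}[x,y]^{H_{\overline{k}}}\xrightarrow{\sim}\overline{k}[x,y]^{G_{\overline{k}}}$; transporting the tautological descent datum from the right-hand side through this isomorphism yields a semilinear $\Gal_k$-action $\delta$ on $k[x,y]^H\otimes_k\overline{k}=\overline{k}[x,y]^{H_{\overline{k}}}$ of the form $\delta(\sigma)=(\text{substitution by }c_\sigma)\circ(\text{standard action of }\sigma)$, whose fixed ring is $\cong k[x,y]^G$ as a $k$-algebra. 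Now for $\sigma\in\ker\overline\phi$ we have $c_\sigma\in H(\overline{k})$, which acts trivially on $H$-invariants, so $\delta(\sigma)$ is the standard action there; hence the $\ker\overline\phi$-invariant subring is $L[x,y]^H$, and the residual action of $\Gal(L/k)$ is still given by $\delta$. On $H$-invariants the automorphism ``substitution by $c_\sigma$'' depends only on its image $\overline\phi(\sigma)=\iota(\sigma|_L)\in A$, and the conjugation action of $N$ on $H$ induces a $k$-rational action of the constant group $A$ on $k[x,y]^H$ — $k$-rational because it commutes with the Galois action (constancy of $A$), and well defined because inner automorphisms act trivially on invariants. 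Matching these up, the residual $\Gal(L/k)$-action on $L[x,y]^H$ is precisely the diagonal one (Galois on $L$, conjugation through $\iota$ on $k[x,y]^H$), and passing to fixed rings gives $k[x,y]^G\cong(L[x,y]^H)^{\Gal(L/k)}$.

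\emph{Main obstacle.} The delicate part is the last step: one must verify that the transported descent datum $\delta$ genuinely descends through $L$ and that it then coincides with the asserted diagonal action. The heart of the matter is that conjugation by $\Norm_{\SL_2}(H)$ defines an action of the abstract group $\Aut(\Gamma(G_{\overline{k}},\rho_{\overline{k}}))$ on $k[x,y]^H$ that is already defined over $k$ — this uses both the constancy of $\Norm_{\SL_2}(H)/H$ supplied by Theorem~\ref{thm:NGmodGAutMcKay-more-general} and the triviality of inner automorphisms on invariant rings — so that the cocycle $c$, which a priori only takes values in $\Norm_{\SL_2}(H)(\overline{k})$, acts on $L[x,y]^H$ through its image in the constant automorphism group $\Aut(\Gamma(G_{\overline{k}},\rho_{\overline{k}}))$.
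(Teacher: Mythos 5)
Your proposal is correct and follows essentially the same route as the paper: classify $G$ by a cocycle valued in $\Norm_{\SL_2}(H)(\overline{k})$ (you make the cocycle $c_\sigma=A_0^{-1}\sigma(A_0)$ explicit where the paper invokes Corollary \ref{cor: brian}), use the constancy of $\Norm_{\SL_2}(H)/H\cong\Aut(\Gamma(G_{\overline{k}},\rho_{\overline{k}}))$ from Theorem \ref{thm:NGmodGAutMcKay-more-general} to turn its image into a homomorphism whose kernel cuts out $L$, and conclude by observing that $\Gal_L$ acts through $H(\overline{k})$, hence trivially on the invariant ring, so the fixed ring computation collapses to $(L[x,y]^H)^{\Gal(L/k)}$. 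The only difference is presentational: you spell out the transported descent datum and the $k$-rationality of the $\Aut(\Gamma)$-action on $k[x,y]^H$, which the paper leaves implicit.
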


\begin{proof}
For ease of notation, we set $\Gal_M:=\Gal(\overline{M}/M)$ 
for any field $M$. 
By Theorem \ref{thm:gp-sch-over-k-main-features}, assertion (\ref{item-conjugation}), 
we know that $G\subset\SL_{2,k}$ is a twisted form of one of the 
subgroup schemes appearing in the theorem. 
Thus, by Corollary \ref{cor: brian} and Remark \ref{rem: brian}, 
it is classified by a $1$-cocycle 
$c\colon\Gal_k\to \Norm_{\SL_{2,k}}(H)(\overline{k})$. 
By Theorem \ref{thm:NGmodGAutMcKay}, 
$\Norm_{\SL_{2,k}}(H)/H$ is a constant group scheme over $k$, 
so
$$
H^1(\Gal_k,\,\Norm_{\SL_{2,\overline{k}}}(H(\overline{k}))/H(\overline{k})) \,\cong\, 
\Hom(\Gal_k,\,\Norm_{\SL_{2,\overline{k}}}(H(\overline{k}))/H(\overline{k})).
$$
As a result, the composite map
$$
\Gal_k \,\xrightarrow{c}\, \Norm_{\SL_{2,k}}(H)(\overline{k}) \,\to\,
\left(\Norm_{\SL_{2,k}}(H)/H\right)(\overline{k})
\,\cong\,\Aut\left(\Gamma(H_{\overline{k}},\rho_{\overline{k}})\right)
$$
is a homomorphism of groups.
Hence, the kernel is of the form $\Gal(\overline{k}/L)$ 
for some finite extension Galois $L/k$. 
Then, the induced map
$$
\Gal(L/k) \,\hookrightarrow\,
\Aut\left(\Gamma(G_{\overline{k}},\rho_{\overline{k}})\right)
$$
is an injection. 
Next, the cocycle $c$ yields an action of 
$\Gal_k$ on $H_{\overline{k}}$ and we compute
\begin{align*}
k[x,y]^G  &\cong\,
\overline{k}[x,y]^{H_{\overline{k}}\rtimes\Gal_k}
\,=\,\left((\overline{k}[x,y]^{H_{\overline{k}}})^{\Gal_L}\right)^{\Gal(L/k)}\\
&=\,\left(\left(\overline{k}\otimes_L L[x,y]^H\right)^{\Gal_L}\right)^{\Gal(L/k)}
\,=\, (L[x,y]^H)^{\Gal(L/k)},
\end{align*}
where the last equality comes from the fact that $\Gal_L$ 
factors through $H(\overline{k})$, hence acts trivially on the invariant ring. 
\end{proof}

\section{Equations of twisted RDP singularities:~Theorem \ref{tm:twisted-invariants}}

We now classify all singularities types $\Aff^2_k/G$, where where 
$G\subset\SL_{2,k}$ is a finite linearly reductive group scheme, and
where $k$ is a perfect field.
We do so by applying Corollary \ref{cor:twisted-invariants-NGmodG},
which reduces us to considering all possible injective maps from 
$\Gal(L/k)$ to the automorphism of the McKay graph.
Regarding the definitions of the RDP singularities over nonclosed
fields, we refer to \cite[\S24]{Lipman}.

\subsection{Types $A$ and $B$}

Consider $H=\bmu_n\subset\SL_{2,k}$ with $n\geq3$.
Although Corollary \ref{cor:twisted-invariants-NGmodG} does not directly apply 
(for example, $\Norm_{\SL_2}(\bmu_n)/\bmu_n$ is not constant), 
we can follow the same argument in the proof. 
For a twisted subgroup scheme $G\subset\SL_{2,k}$, we have
$$
k[x,y]^G \,=\,\overline{k}[x,y]^{\bmu_n\rtimes\Gal_k},
$$
where $\Gal_k$ acts on $\bmu_n$ via
$$
\Gal_k \,\to\, \Norm_{\SL_2}(\bmu_n)(\overline{k}) \,\cong\,
\GG_m(\overline{k})\rtimes\ZZ/2
$$
by Proposition \ref{prop:NGComputation}. 
We see that $\GG_m(\overline{k})$ acts trivially on $\bmu_n(\overline{k})$ 
(since diagonal matrices commute) and $\ZZ/2$, 
which is generated by the matrix $\begin{bmatrix}
    0&1\\-1&0
\end{bmatrix}$, 
acts on $\bmu_n$ by $u\mapsto u^{-1}$ and swaps $x,y$. 
\subsubsection{$\cha k\neq2$}
Let us now assume $\cha k\neq2$.
Then, we are reduced to looking at quadratic extensions $L=k(\sqrt{d})$ and we see
$$
k[x,y]^G \,=\,L[x,y]^{\bmu_n\rtimes\Gal(L/k)}=L[x^n,xy,y^n]^{\ZZ/2}
$$
where $\ZZ/2$ swaps $x,y$, and sends $\sqrt{d}$ to $-\sqrt{d}$. 
So,
$$
k[x,y]^G \,=\, k[x^n+y^n,xy,\sqrt{d}(x^n-y^n)].
$$
Letting
$$
A\,:=\,x^n+y^n,\quad B\,:=\,\sqrt{d}(x^n-y^n),\quad C\,:=\,xy,
$$
we see
$$
dA^2-B^2=d(x^{2n}+2x^ny^n+y^{2n})-d(x^{2n}-2x^ny^n+y^{2n})=4d(xy)^n=4dC^n.
$$
If $\sqrt{d}\in k$, then after blowing up $n$ times, one obtains that  
is an RDP of type $A_{n-1}$.
If $\sqrt{d}\notin k$, then after blowing up $\beta(n)$ times, one obtains that 
this is an RDP of type 
$B_{\beta(n)}$, with $\beta(-)$ as defined in 
Theorem \ref{tm:twisted-invariants},
see also the example \cite[\S24, page 263]{Lipman}.
We also note that this singularity of type $B_{\beta{n}}$
splits over $k(\sqrt{d})$, that is,
it becomes isomorphic to an RDP of type $A_{n-1}$.
Moreover, it is see that two such equations with parameters $d,d'\in k^\times$
are isomorphic over $k$ if and only if $d/d'\in k^{\times2}$.

\subsubsection{$\cha k=2$}
Finally, assume $\cha k=2$ and then, we have to consider a 
quadratic extension of the form $L=k(\alpha)$,
where $\alpha$ is a root of the Artin--Schreier equation
$\wp(x):=x^2-x=d$. 
We then obtain
$$
k[x,y]^G \,=\, L[x,y]^{\bmu_n\rtimes\Gal(L/k)}\,=\,L[x^n,xy,y^n]^{\ZZ/2},
$$
where $\ZZ/2$ swaps $x,y$ and sends $\alpha$ to $\alpha+1$. 
So,
$$
k[x,y]^G \,=\, k[x^n+y^n,xy,\alpha x^n+(\alpha+1)y^n)].
$$
Letting
$$
A\,:=\,x^n+y^n,\quad B\,:=\,\alpha x^n + (\alpha+1)y^n,\quad C\,:=\,xy,
$$
we see
$$
  dA^2+AB+B^2 \,=\, C^n.
$$
If $d\in\wp(k)$, then this is an RDP of type $A_{n-1}$ and
if $d\notin\wp(k)$, then this is an RDP of type 
$B_{\beta(n)}$, see again \cite[\S24, page 263]{Lipman}.
In the latter case, it splits over $k(\wp^{-1}(d))$, where
it becomes an RDP ot type $A_{n-1}$.
Moreover, it is see that two such equations with parameters $d,d'\in k$
are isomorphic over $k$ if and only if $d-d'\in\wp(k)$.

\subsection{Types $C_{n+1}$ and $D_{n+2}$ for $n\geq3$}\label{subsec:CDtwist}

Here, we consider $\BD_n^*$ for $n\geq3$
and in particular, we have $\cha k\neq2$.
Corollary \ref{cor:twisted-invariants-NGmodG} reduces us to considering 
$L=k(\sqrt{d})$ and the unique non-trivial map
$$
\Gal(L/k) \,\to\, \left(\Norm_{\SL_2}(\BD^*_n)/\BD^*_n\right)(\overline{k}).
$$
It takes the generator $\sigma\in\Gal(L/k)$ to the unique 
non-trivial element $(\Norm_{\SL_2}(\BD^*_n)/\BD^*_n)(\overline{k})$, 
which we know by Proposition \ref{prop:NGComputation}, 
is given by (the coset defined by) the diagonal 
matrix $\diag(\zeta_{4n},\zeta_{4n}^{-1})$, where $\zeta_{4n}\in\overline{k}$
is a primitive $4n$.th root of unity.
(If $p\mid n$, then one has to work with the nonreduced 
group scheme $\bmu_{4n}\subset\SL_{2,k}$ 
that acts on $\BD_n^*\subset\SL_{2,k}$ by conjugation.
Up to inner automorphism of $\BD_n^*$, this yields
an element of order $2$ 
and thus, an element of order $2$
in $(\Norm_{\SL_2}(\BD^*_n)/\BD_n^*)(\overline{k})$,
which justifies the above reasoning also in this case.)

So, we have
$$
L[x,y]^{\BD^*_n\rtimes\Gal(L/k)} \,=\,L[u,v,w]^{\Gal(L/k)}
$$
where 
\begin{equation}\label{BD2-invariants}
u:=x^{2n}+y^{2n},\quad v:=(xy)^2,\quad w:=xy(x^{2n}-y^{2n}).
\end{equation}
They satisfy the equation
\begin{equation}\label{Dn-sing}
w^2 \,=\,(u^2-4v^n)v.
\end{equation}
Since $\diag(\zeta_{4n},\zeta_{4n}^{-1})$ sends $x,y$ to 
$\zeta_{4n} x,\zeta_{4n}^{-1}y$, 
we see 
$$
\sigma(\sqrt{d})\,=\,-\sqrt{d},\quad \sigma(u)\,=\,-u,\quad 
\sigma(v)\,=\,v,\quad \sigma(w)\,=\,-w.
$$
Then
$$
L[x,y]^{\BD^*_n\rtimes\Gal(L/k)} \,=\, k[U,V,W]
$$
where
\begin{equation}\label{BDn-twist-invariants}
U \,:=\,\sqrt{d} u,\quad V\,:=\,v,\quad W\,:=\,\sqrt{d} w.
\end{equation}
These invariants satisfy the equation
\begin{equation}\label{Dn-sing-twist}
W^2=(U^2-4dV^n)V.
\end{equation}
If $\sqrt{d}\in k$, then this is an RDP of type $D_{n+2}$
and if $\sqrt{d}\notin k$, then this is an RDP of type
$C_{n+2}$.
In the latter case, it splits over $k(\sqrt{d})$,
that is, becomes isomorphic to an RDP of type $D_{n+2}$.
Moreover, two such equations with parameters $d,d'\in k^\times$
are isomorphic over $k$ if and only if $d/d'\in k^{\times 2}$.

\subsection{Types $F_4$ and $E_6$}
Before computing the twisted singularities, we have to understand
quotients by $\BT$, which lead to $E_6$-singularities.
It turns out that quotients by $\BT^*$ are easier to handle
and to obtain $\BT$-quotients after quadratic twist.
Let $k$ be a field with $\cha k\neq2,3$.
By Proposition \ref{prop:BD2-1d-irreps}, we have
an exact sequence
$$
1 \,\to\, \BD_2 \,\to\, \BT^* \,\to\, \ZZ/3 \,\to\, 1,
$$
so 
\begin{equation}
\label{eq: BTast invariants}
k[x,y]^{\BT^*} \,=\, k[u,v,w]^{\ZZ/3}
\end{equation}
where $u=x^4+y^4$, $v=(xy)^2$ and $w=xy(x^4-y^4)$. 
Then $\ZZ/3$ acts as the matrix
$$
-\frac{1}{\sqrt{2}}
\begin{bmatrix}
    \zeta_8&\zeta_8\\
    -\zeta_8^{-1}&\zeta_8^{-1}
\end{bmatrix}
$$
where $\zeta_8\in\overline{k}$ is a primitive $8$.th root of unity.
$$
\tau(u) \,=\,-\frac{1}{2}(u-6v),\quad \tau(v)\,=\,-\frac{1}{4}(u+2v),\quad\tau(w)\,=\,w.
$$
Note that the eigenvalues of $\tau$ acting on the $2$-dimensional 
vector space $Lu\oplus Lv$ are given by the $3$rd roots of unity that we 
denote $\zeta_3^{\pm}$.
(When working with $\BT$, we would have to understand the quotient by 
some $\bmu_3$-action on $k[u,v,w]$ in \eqref{eq: BTast invariants},
which is harder than working with the above $\ZZ/3$-action.)

Now, over $k(\sqrt{-3})$, we may diagonalize. 
Explicitly, the eigenvectors are
\begin{equation}\label{eqn:xipm}
\xi^\pm \,:=\,u-(2+4\zeta_3^\mp)v \,=\, u\mp 2\sqrt{-3}v,
\end{equation}
so $k(\sqrt{-3})[u,v,w]^{\ZZ/3}$ is given by the invariants
\begin{equation}\label{eqn:E6-invariants-explicit}
A^\pm \,:=\,(\xi^\pm)^3,\quad B \,:=\,\xi^+\xi^-=u^2+12v^2,\quad C\,:=\,w
\end{equation}
and, in fact, $A^-$ is not needed since
\begin{equation}\label{eqn:A+A-C2}
\sqrt{-3}(A^+-A^-)\,=\,(6C)^2.
\end{equation}
These invariants satisfy the unique relation
\begin{equation}\label{eqn:BT-invariants1}
(A^+)^2 + 12A^+C^2\sqrt{-3} \,=\, B^3.
\end{equation}
Note this is the same as
\begin{equation}\label{eqn:BTast-sing}
(A^++6C^2\sqrt{-3})^2 + 108C^4\,=\,B^3.
\end{equation}
If $\sqrt{-3}\in k$, then \eqref{eqn:BTast-sing} 
defines the $\BT^*$-invariants. 
If $\sqrt{-3}\notin k$, then
$$
k[x,y]^{\BT^*} \,=\,k(\sqrt{-3})[A^+,B,C]^{\Gal(k(\sqrt{-3})/k)}.
$$
Note $\Gal(k(\sqrt{-3})/k)$ fixes $B$ and $C$, and maps $A^+$ to $A^-$. 
Thus, 
$$
k[x,y]^{\BT^*} \,=\,k[A,B,C]
$$
where
\begin{equation}\label{eqn:BT-eqA}
A \,:=\,A^+ + A^- \,=\, 2u(u^2-36v^2).
\end{equation}
Note in light of \eqref{eqn:A+A-C2} that the Galois invariant expression 
$\sqrt{-3}(A^+-A^-)$ is not needed. 
Using  \eqref{eqn:A+A-C2} and \eqref{eqn:BT-eqA}, we see
$$
A^+ \,=\,\frac{A}{2}-6\sqrt{-3}C^2,
$$
so \eqref{eqn:BTast-sing} becomes
\begin{equation}\label{eqn:BT-invariants2}
(A/2)^2+108C^4 \,=\, B^3.
\end{equation}
Note that this is the same equation as \eqref{eqn:BTast-sing} after a 
change of variables that replaces $A$ by $2A$ and $A^+$ by $A^+-6C^2\sqrt{-3}$.
Summing up, we find that
\begin{equation}
\label{eq:BTast equation}
k[x,y]^{\BT^*}\,\cong\,k[R,S,T]/(R^2+108S^4-T^3)
\end{equation}
for every field $k$ with $\cha k\neq2,3$.

To compute twisted forms over perfect fields $k$, 
Corollary \ref{cor:twisted-invariants-NGmodG} implies that we
need only consider $L=k(\sqrt{d})$ with $\sqrt{d}\not\in k$ and
a non-trivial map
$$
\Gal(L/k) \,\to\, \left(\Norm_{\SL_2}(\BT^*)/\BT^*\right)(\overline{k}),
$$
which necessarily takes the generator $\sigma\in\Gal(L/k)$ to the matrix 
$\diag(\zeta_8,\zeta_8^{-1})$ where $\zeta_8\in\overline{k}$ is a primitive
$8$.th root of unity.
This uses our explicit description of $\Norm_{\SL_2}(\BT^*)$ 
given in Proposition \ref{prop:NGComputation}. 
So,
$$
\sigma(x) \,=\,\zeta_8 x,\quad \sigma(y)\,=\,\zeta_8^{-1} y
$$
hence
$$
\sigma(u)=-u,\quad \sigma(v)=v, \quad \sigma(w)=-w,\quad \sigma(A)=-A,\quad \sigma(B)=B,\quad \sigma(C)=-C.
$$

First, consider the case where $\sqrt{-3}\notin L$. 
Then 
$$
L[x,y]^{\BT^*\rtimes\Gal(L/K)} \,=\,
L[A,B,C]^{\Gal(L/K)}=k[\widetilde{A},B,\widetilde{C}]
$$
where
\begin{equation}
    \widetilde{A}\,:=\,\sqrt{d}A,\quad \widetilde{C}\,:=\,\sqrt{d}C.
\end{equation}
From \eqref{eqn:BT-invariants2}, the invariants satisfy the equation
$$
\frac{1}{4d}\widetilde{A}^2+\frac{108}{d^2}\widetilde{C}^4 \,=\, B^3,
$$
or equivalently
\begin{equation}\label{eqn:BT-twisted2}
(d\widetilde{A}/2)^2 \,+\, 108d\widetilde{C}^4 \,=\, (dB)^3.
\end{equation}

Now assume $\sqrt{-3}\in L$. If $\sqrt{-3}\notin k$, then we may assume $d=-3$. We see 
$$
\sigma(A^\pm) \,=\,-u\mp2(-\sqrt{-3})v\,=\,-A^\pm.
$$
So letting
\begin{equation}
    \widetilde{A}^+:=\sqrt{d}A^+=\sqrt{-3}A^+
\end{equation}
we see 
$$
L[x,y]^{\BT^*\rtimes\Gal(L/k)} \,=\,L[A^+,B,C]^{\Gal(L/k)} 
\,=\,k[\widetilde{A}^+,B,\widetilde{C}];
$$
using equation \eqref{eqn:BTast-sing}, we see
$$
-\frac{1}{3}(\widetilde{A}^++6\widetilde{C}^2)^2+12\widetilde{C}^4 \,=\, B^3.
$$
Multiplying through by $-27$, we see
\begin{equation}\label{eqn:BT-twisted1a}
9(\widetilde{A}^++6\widetilde{C}^2)^2-4(3\widetilde{C})^4\,=\,(-3B)^3.
\end{equation}
Up to a change of variables, this is the same equation as
\begin{equation}
\label{eq:BTequation1}
T^2-4V^4\,=\,U^3.
\end{equation}
Note that this equation, up to changing $V$ by a factor of $3$, has the same form as 
$$
T^2+108(-3)V^4\,=\,U^3,
$$
which is the equation for the singularity obtained from \eqref{eqn:BT-twisted2} 
if one were to substitute $d=-3$.

Lastly, assume $\sqrt{-3}\in k$, so $\sigma(A^\pm)=-A^\mp$. 
Note then that $A^+\mp A^-$ have eigenvalues $\pm1$ with respect to $\sigma$. 
Thus, the invariants are $A^+-A^-$, as well as 
$$
B,\quad\sqrt{d}C=:\widetilde{C},\quad \sqrt{d}(A^++A^-)=:\widetilde{A}.
$$
Now $A^+-A^-$ is expressible in terms of $\widetilde{C}$ by \eqref{eqn:A+A-C2} 
so it is not needed. We therefore obtain the same equation \eqref{eqn:BT-twisted2} 
as before.
Summing up, we find that the quadratic twist by $L=k(\sqrt{d})$ is
given by
\begin{equation}
\label{eq:BTast twist equation}
L[x,y]^{\BT^*\rtimes\Gal(L/k)}\,\cong\,k[R,S,T]/(R^2+108dS^4-T^3).
\end{equation}

Finally, the group scheme $\BT$ becomes isomorphic to $\BT^*$
over $L=k(\sqrt{-3})$.
In particular, $\BT$-invariants
correspond to $\BT^*$-invariants if $\sqrt{-3}\in k$
and since $\sqrt{-3}\in k$, the equation \eqref{eq:BTast equation}
simplifies to \eqref{eq:BTequation1}.
If $\sqrt{-3}\notin k$, then we have to use \eqref{eq:BTast twist equation}
with $d=-3$.
Summing up, we find
\begin{equation}
    \label{eq:BT equation}
    k[x,y]^{\BT} \,\cong\, k[R,S,T]/(R^2-4S^4-T^3)
\end{equation}
for every perfect field of characteristic $\neq2,3$.
This is an RDP of type $E_6$.

Using \eqref{eq:BTast twist equation}, we can also compute
quadratic twists of this: if $L=k(\sqrt{d})$, then
$$
L[x,y]^{\BT\rtimes\Gal(L/k)} \,\cong\, k[R,S,T]/(R^2-4dS^4-T^3).
$$
If $\sqrt{d}\in k$, then this is an RDP of type $E_6$
and if $\sqrt{d}\notin k$,
then this is an RDP of type $F_4$.
In the latter case, it splits over $L$, that is, becomes isomorphic to an RDP
of type $E_6$.
Moreover, two such equations with parameters $d,d'\in k^\times$
are isomorphic over $k$ if and only if $d/d'\in k^{\times 2}$.

\subsection{Types $D_4$ and $G_2$}\label{subsec:S3twist}
Let $k$ be a field with $\cha\neq2$.
By Corollary \ref{cor:twisted-invariants-NGmodG}, twists of the $D_4$-singularity 
are given by Galois extensions, whose Galois group is a subgroup of 
$\Sym_3$. 
Here, we consider $L/k$ a Galois extension with group $\Sym_3$ or $\ZZ/3$
and we will deal with $\ZZ/2$ in \S\ref{subsec:D4quadtwist}.
Then, $L$ is the splitting field of a cubic, which we can assume to be
of the form $t^3+at+b\in k[t]$:
If $\cha k\neq3$, then this is via the usual Tschirnhaus transformation
and if $\cha k=3$, then one first makes a suitable change $t\mapsto t+r$
that makes the coefficient in front the factor $t$ zero, followed
by a transformation $t\to t^{-1}$.
We also have $a\neq0$ if $\cha k=3$ because the cubic is assumed
to be separable.
We let $\Delta=-4a-27b^2$ be the discriminant and 
recall that $L/K$ is a $\ZZ/3$-extension if and only if $\Delta\in(k^*)^2$,
which is also true if $\cha k=3$.

Recall from Propositions \ref{prop:BD2-1d-irreps} 
that we have an isomorphism
$$
\left(\Norm_{\SL_{2,k}}(\BD_2)/\BD_2\right)(\overline{k})\,\cong\, \Sym_3,
$$
where the unique index $2$ subgroup of the left hand side is generated by
$$
\sigma \,:=\,\frac{1}{\sqrt{2}}\begin{bmatrix}
    \zeta_8 & \zeta_8\\
    -\zeta_8^{-1} & \zeta_8^{-1}
\end{bmatrix}\in \BT^*(\overline{k})\setminus\BD_2(\overline{k})
$$
for some primitive $8$.th root of unity $\zeta_8\in\overline{k}$.
This is also true if $\cha k=3$, where this simplifies as we may assume that
$\sqrt{2}=\zeta_8^2\in\FF_9$, and we note that $\sigma$ is not diagonalisable, but
unipotent of order 3.

We also have an order $2$ element given by
$$
\tau \,:=\,\diag(\zeta_8,\zeta_8^{-1})\,\in\,
\bmu_8(\overline{k})\setminus\bmu_4(\overline{k}).
$$
\begin{itemize}
\item  If $L/k$ is a $\ZZ/3$-extension, then  
$\Gal(L/k)\to(\Norm_{\SL_2}(\BD_2)/\BD_2)(\overline{k})$ 
identifies $\Gal(L/k)$ with the subgroup generated by $\sigma$. 
Hence, letting the roots of $t^3+at+b$ be $\theta_0,\theta_1,\theta_2$, 
we may assume
$$
\sigma(\theta_i) \,=\,\theta_{i+1},
$$
where the indices are taken mod $3$. 
\item
If $L/k$ is an $\Sym_3$-extension, then 
$\Gal(L/k)\to(\Norm_{\SL_2}(\BD_2)/\BD_2)(\overline{k})$ is an isomorphism. 
In this case, $\tau$ fixes one of the roots, which we may take to be $\theta_0$. 
Hence, we may additionally assume 
$$
\tau(\theta_0)=\theta_0, \quad \tau(\theta_1)=\theta_2,\quad \tau(\theta_2)=\theta_1.
$$
\end{itemize}

Next, we have
$$
L[x,y]^{\BD_2} \,=\, L[u,v,w],
$$
where 
\begin{equation}\label{really-BD2-invariants}
u:=x^4+y^4,\quad v:=(xy)^2,\quad w:=xy(x^4-y^4).
\end{equation}
Then one computes from the explicit matrices above that
$$
\sigma(w)\,=\,w,\quad\tau(w)\,=\,-w
$$
hence, the vector space $Lw$ is the sign representation of $\Sym_3$. 
On the other hand, we compute
$$
\tau(u) =-u,\quad\tau(v)=v,\quad\sigma(u)=-\frac{1}{2}u+3v,
\quad\sigma(v)=-\frac{1}{4}u-\frac{1}{2}v.
$$
and letting
\begin{equation}\label{eqn:fiS3twist}
f_0:=-4v,\quad f_1:=u+2v,\quad f_2:=-u+2v,    
\end{equation}
we see that $\tau$ and $\sigma$ act as follows
$$
\begin{array}{cclllc}
\tau &:& f_0\mapsto f_0, & f_1\mapsto f_2, & f_2\mapsto f_1\\
\sigma &:& f_0\mapsto f_1, & f_1\mapsto f_2, & f_2\mapsto f_0&.
\end{array}
$$
Thus, for both the $f_i$ and the $\theta_i$, we see $\sigma$ acts on the 
subscripts as the $3$-cycle $(012)$, 
and when $\Gal(L/k)\cong\Sym_3$, we see $\tau$ acts as 
the transposition $(12)$.

Recall that
$$
k\quad\subseteq\quad
L^\sigma \,=\,L^{\ZZ/3} \,=\,k(\sqrt{\Delta})
$$
coincides with $k$ or it is is the unique quadratic 
extension of $k$ in $L$.

We now compute
\begin{eqnarray*}
  L[x,y]^{\BD_2\rtimes\Gal(L/k)} &=& L[u,v,w]^{\Gal(L/k)} \\
  &=&
\left\{
\begin{array}{ll}
L[f_i;w]^{\langle\sigma\rangle}& \mbox{ if }\Gal(L/k)=\ZZ/3\\
(L[f_i;w]^{\langle\sigma\rangle})^{\ZZ/2}& \mbox{ if } \Gal(L/k)=\Sym_3.
\end{array}
\right.
\end{eqnarray*}

\subsubsection{$\cha k\neq3$}
Here, we first consider the case where $\zeta_3\in L$, that is, 
$L$ contains a $3$rd root of unity.
Then, $k(\zeta_3)\subseteq L$. 
Since there is at most one quadratic extension of $k$ in $L$, 
we see either $\zeta_3\in k(\sqrt{\Delta})\setminus k$ or $\zeta_3\in k$ 
and in either case $\sigma(\zeta_3)=\zeta_3$. Consider the Lagrange resolvents
$$
g^\pm \,:=\,\sum_i\zeta_3^{\mp i} f_i,\quad 
\eta^\pm:=\sum_i\zeta_3^{\mp i} \theta_i.
$$
Observe
$$
v=-\frac{1}{12}(g^++g^-),\quad u=\frac{1}{\sqrt{-12}}(g^--g^+),
$$
and $\sqrt{-12}=2(\zeta_3-\zeta_3^{-1})\in L$, 
so equation \eqref{Dn-sing} becomes
\begin{equation}\label{D4-twist0}
108w^2 \,=\, (g^+)^3+(g^-)^3.    
\end{equation}

Let us first make some basic observations about $\eta^\pm$. 
One checks
\begin{equation}\label{etapmequs1}
\eta^+\eta^-=-3a,\quad (\eta^+)^3+(\eta^-)^3=-27b,    
\end{equation}
which implies that $(\eta^\pm)^3$ are the roots of 
$$
t^2 + 27bt -27a^3.
$$
Additionally, we see
$$
\left((\eta^+)^3-(\eta^-)^3\right)^2
\,=\,\left((\eta^+)^3+(\eta^-)^3\right)^2-4(\eta^+\eta^-)^3
\,=\,-27\Delta,
$$
hence
\begin{equation}\label{etapmequs2}
(\eta^+)^3-(\eta^-)^3 \,=\,\pm3\sqrt{-3\Delta}.
\end{equation}
Furthermore, \eqref{etapmequs1} tells us at most one of the $\eta^\pm$ vanishes, 
and this occurs if and only if $a=0$: 
indeed, if both were to vanish then $a=b=0$ and our polynomial $t^3+at+b$ 
would not be irreducible.

Note
$$
\sigma(g^\pm)\,=\,\zeta_3^\pm g^\pm,\quad \sigma(\eta^\pm)\,=\,\zeta_3^\pm \eta^\pm.
$$
Hence, if $\epsilon\in\{\pm\}$ is such that $\eta^\epsilon\neq0$, we see
\begin{equation}\label{eqn:gepsinvars}
L[x,y]^{\BD_2\rtimes\Gal(L/k)} \,=\,
k(\Delta)[\eta^\epsilon g^{-\epsilon},(\eta^\epsilon)^{-1}g^\epsilon,w]^{\Gal(k(\Delta)/k)}.   
\end{equation}
If both $\eta^\pm$ are non-zero, then we also have  
\begin{equation}\label{eqn:gepsinvars2}
L[x,y]^{\BD_2\rtimes\Gal(L/k)} \,=\,k(\Delta)[\eta^\mp g^\pm,w]^{\Gal(k(\Delta)/k)}
\end{equation}
Note in the above computations, we have used $\eta^{-\epsilon}=a/\eta^\epsilon$, 
so, for example, the invariant $\eta^{-\epsilon} g^{\epsilon}$ 
is not needed as it equals $a(\eta^\epsilon)^{-1}g^\epsilon$.

Consider the subcase where $\zeta_3\in L$ and $-3\Delta\in k^{\times 2}$. 
If $L/k$ is an $\Sym_3$-extension, then this is equivalent to 
$\zeta_3\in k(\sqrt{\Delta})\setminus k$ 
and if $L/k$ is an $\ZZ/3$-extension, then this is equivalent to $\zeta_3\in k$. 
In either case, $\eta^\pm$ and $g^\pm$ are fixed by $\Gal(k(\Delta)/k)$ 
since in the latter case the Galois group is trivial, 
and in the former case the Galois group is generated by $\tau$ which acts as $\tau(\zeta_3)=\zeta_3^{-1}$. 
Thus, letting $\epsilon\in\{\pm\}$ such that $\eta^\epsilon\neq0$, we have
$$
L[x,y]^{\BD_2\rtimes\Gal(L/k)} \,=\,k[\eta^\epsilon g^{-\epsilon},(\eta^\epsilon)^{-1}g^\epsilon,\sqrt{\Delta}w].
$$
Letting
$$
A :=\eta^\epsilon g^{-\epsilon},\quad B:=(\eta^\epsilon)^{-1}g^\epsilon,\quad C:=\sqrt{\Delta}w
$$
equation \eqref{D4-twist0} becomes
\begin{equation}\label{D4-twist1a}
\Delta^{-1}108C^2 \,=\,(\eta^\epsilon)^{-3}A^3+(\eta^\epsilon)^{3}B^3.    
\end{equation}
By rescaling $C$ and using that $108/\Delta\in k^{\times 2}$ (note that
we assumed $-3\Delta\in k^{\times2}$), we arrive at an equation
\begin{equation}
\label{eq:G2case1}
C^2 \,=\, (\eta^\epsilon)^3A^3+(\eta^\epsilon)^{-3} B^3.
\end{equation}
Note that this case includes the case where $\zeta_3\in k$ and
$L/k$ is a cubic extension. 
By Kummer theory, we may assume that $L/k$ is generated by a cubic 
of the form $t^3+b=0$ and thus, $L=k(\beta)$ for some choice of 
$\beta:=\sqrt[3]{b}$.
Instead of using the Lagrange resolvents $\eta^{\pm}$, 
we see immediately 
that $\beta^\pm g^\pm$ and $w$ are invariant leading finally
to an equation of the form
\begin{equation}
C^2 \,=\, \beta^3A^3+\beta^{-3} B^3 \,=\, bA^3+b^{-1}B^3.
\end{equation}

Next, consider the subcase where $\zeta_3\in L$ and $-3\Delta\notin k^{\times2}$. 
This is equivalent to $\zeta_3\in k$ and $L/k$ an $\Sym_3$-extension. 
Then $\tau(\eta^\pm)=\eta^\mp$ and $\tau(g^\pm)=g^\mp$. 
Note in particular that $\eta^\pm\in L^*$ since, by the Galois action, 
one of $\eta^\pm$ vanishes if and only if they both vanish, 
and we have already established that at most one of $\eta^\pm$ can vanish. 
Thus, by \eqref{eqn:gepsinvars2},
$$
L[x,y]^{\BD_2\rtimes\Gal(L/k)} \,=\,k(\Delta)[\eta^\mp g^\pm,w]^{\Gal(k(\Delta)/k)}=k[A,B,C]
$$
where
\begin{equation}\label{case1bG2twist}
A:=\eta^-g^++\eta^+g^-,\quad B:=\sqrt{-3\Delta}(\eta^-g^+-\eta^+g^-),\quad C:=\sqrt{\Delta}w;    
\end{equation}
note the $\sqrt{-3}$ factor in $B$ is unnecessary but turns out to be convenient. 
Then equation \eqref{D4-twist0} becomes 
\begin{align*}
8(\eta^+\eta^-)^3\sqrt{\Delta}108C^2 &= (\eta^+)^3(2\sqrt{\Delta}\eta^-g^+)^3+(\eta^-)^3(2\sqrt{\Delta}\eta^+g^-)^3\\
&=(\eta^+)^3(\sqrt{\Delta}A+\sqrt{-3}^{-1}B)^3+(\eta^-)^3(\sqrt{\Delta}A-\sqrt{-3}^{-1}B)^3\\
&=\sqrt{-3}^{-1}B((\eta^+)^3-(\eta^-)^3)(3\Delta A^2-\frac{B^2}{3})\\
&\qquad 
+\sqrt{\Delta}A((\eta^+)^3+(\eta^-)^3)(\Delta A^2-B^2).
\end{align*}
Using equations \eqref{etapmequs1} and \eqref{etapmequs2} we obtain
\begin{equation}\label{D4-twist1b-long}
-2^5 3^6 a^3 C^2 \,=\, -3bA(9\Delta A^2-9B^2)\pm B(9\Delta A^2-B^2)
\end{equation}
Multiplying by $-1$, replacing $A$ by $A/3$, replacing $B$ by $\pm B$, 
and replacing $C$ by $C/(2^4 3^6 a^2)$
we obtain:
\begin{equation}\label{D4-twist1b}
2 a C^2 \,=\, bA(\Delta A^2-9B^2) + B(\Delta A^2-B^2)
\end{equation}
Note that over $k(\sqrt{-3\Delta})$, we can substitute $A+B$ for $A$
and $\sqrt{-3\Delta}(A-B)$ for $B$ into \eqref{D4-twist1b-long} and get
$$
\frac{2^3 3^6 a^3}{\Delta}C^2 \,=\, B^3(\sqrt{-3\Delta}+9b)-A^3(\sqrt{-3\Delta}-9b)
$$
and then using the quadratic equation that $(\eta^{\pm})^3$ satsify, we see 
$$
\pm\frac{2}{3}(\eta^\pm)^3 \,=\,\sqrt{-3\Delta}\mp 9b,
$$
so this yields
$$
\frac{2^23^6a^3}{\Delta}C^2=(\eta^+)^3B^3+(\eta^-)^3A^3
$$
and after rescaling all $A,B,C$ by $2^23^6a^3/\Delta$, we recover 
and we recover \eqref{eq:G2case1}.
Running these substitutions and rescaling backwards, 
this shows that \eqref{eq:G2case1} can be transformed into
\eqref{D4-twist1b} if $\sqrt{-3\Delta}\in k$.

Finally, we consider the case where $\zeta_3\notin L$. 
Letting $G$ be our twisted form of $\BD_2$, since $G$ is linearly reductive, we know
$$
k(\zeta_3)[x,y]^{G\times_k k(\zeta_3)} \,=\,k[x,y]^G\otimes_k k(\zeta_3).
$$
In particular,
$$
k[x,y]^G=(k(\zeta_3)[x,y]^{G\times_k k(\zeta_3)})^{\Gal(k(\zeta_3)/k)}.
$$
Now, $L(\zeta_3)$ is the compositum of $L$ and $k(\zeta_3)$, 
so $\Gal(L(\zeta_3)/k(\zeta_3))\cong\Gal(L/k)$ and 
$\Gal(L(\zeta_3)/L)\cong\Gal(k(\zeta_3)/k)$. 
Thus, if $L/k$ is an $\Sym_3$-extension, then after base changing to 
$k(\zeta_3)$, we find ourselves in the second subcase above. 
So, we see
\begin{equation}\label{eqn:G2case2}
k[x,y]^G \,=\, \left(k(\zeta_3)[A,B,C]\right)^{\Gal(k(\zeta_3)/k)},    
\end{equation}
where $A,B,C$ are as in equation \eqref{case1bG2twist}. 
Since $\sqrt{\Delta}\in L$, it is fixed by the Galois action. 
Then we see $A,B,C$ are fixed by the Galois action, so we again obtain equation 
\eqref{D4-twist1b}. 
On the other hand, if $L/k$ is a $\ZZ/3$-extension, then $\eta^\pm\neq0$ 
over $L(\zeta_3)$ since otherwise \eqref{etapmequs1} shows $a=0$, 
which is a non-Galois cubic as $\omega\notin k$ so 
$\Delta=-27b^2\notin (k^*)^2$. 
Then, after base changing to $k(\zeta_3)$ and using equation \eqref{eqn:gepsinvars2}, 
we find
$$
k[x,y]^G \,=\, \left(k(\omega)[\eta^\pm g^\mp,w]\right)^{\Gal(k(\zeta_3)/k)},    
$$
and the Galois action swaps $\eta^\pm g^\mp$. 
Thus, the invariants are again $A,B,C$ as in equation \eqref{case1bG2twist},
which satisfy \eqref{D4-twist1b}. 
Summing up, we see that we can always bring the $G_2$-singularities
that arise as $\Aff^2/G$ and in characteristic $\neq2,3$
into the form \eqref{D4-twist1b}.
In some special cases, for example, 
$\zeta_3\in k$ and $\Delta\in k^{\times 2}$, we can find simpler forms
such as \eqref{eq:G2case1}.

\subsubsection{$\cha k=3$}
In this case, we have $\Delta=-a\neq0$ for otherwise, the extension $L/k$ 
becomes trivial or inseparable.
Moreover, the $\Sym_3$-action on $L[u,v,w]$ simplifies to
$$
\begin{array}{cclll}
\tau &:& u\mapsto -u, & v\mapsto v, & w\mapsto -w\\
\sigma &:& u\mapsto u, & v\mapsto v-u, & w\mapsto w
\end{array}
$$

First, assume that $L/k$ is a $\ZZ/3$-extension.
This is equivalent to $-a=\Delta$ being a square in $k^{\times}$.
Setting $t'=t\sqrt{-a}$, we obtain that $L/k$ is
generated by a root $\beta\in L$ of the
Artin--Schreier equation 
$$
  \wp(t') \,:=\, t'^3-t'=\frac{b}{a\sqrt{-a}}
$$
and the other two roots are $\beta\pm1$ with $\sigma(\beta)=\beta+1$. 
We find the invariants $u,w, v+\beta u$ for the diagonal
$\sigma$-action on $L[u,v,w]$ and they obey an
equation
$$
(v+\beta u)^3\,=\,-w^2+u^2v+\beta^3u^3\,=\,
-w^2+u^2(v+\beta u)+(\beta^3-\beta)u^3.
$$
Thus, setting $U=u$, $V=v+\beta u$, and $W=w$,
we find
$$ 
 -W^2+ U^2V + \frac{b}{a\sqrt{-a}} U^3\,=\, V^3.
$$
Replacing $U$ by $-U$ and $V$ by $-V$, we find
\begin{equation}
\label{eq:G2char3Z3}
 W^2+ U^2V + \frac{b}{a\sqrt{-a}} U^3\,=\, V^3.
\end{equation}

Second, assume that $L/k$ is an $\Sym_3$-extension,
that is, $-a\not\in k^{\times2}$.
Then we have $k\subseteq k(\sqrt{-a})\subseteq L$.
Over $k(\sqrt{-a})$, the singularity becomes isomorphic to
\eqref{eq:G2char3Z3}.
We have $\tau(\sqrt{-a})=-\sqrt{-a}$, which implies
that $\wp(\beta)=-\wp(\beta)$, which implies that 
$\tau(\beta)=-\beta+c$ for some $c\in\FF_3$.
Replacing $\beta$ by $\beta+c$, we may assume
$\tau(\beta)=-\beta$ and let $V=v+\beta u$ with respect
to this $\beta$.
Then, $\tau$ acts as
$U\mapsto -U$, $V\mapsto V$, and $W\mapsto -W$
and we have the invariants
$$
U':=\sqrt{-a}U,\quad  V,\quad W':=\sqrt{-a}W
$$
that satisfy the equation
$$
W'^2+U'^2V-\frac{b}{a^2}U'^3=-aV^3.
$$
Rescaling $W\mapsto W/a$, $V'\mapsto V'/(-a)$, this becomes
\begin{equation}
\label{eq:G2char3S3}
W'^2  - aU'^2V-bU'^3=V'^3.
\end{equation}
Note that if $-a$ is a square in $k^\times$, then this simplifies
to \eqref{eq:G2char3Z3} by rescaling $U''=\sqrt{-a}U'$.
In the degenerate case $b=0$ (the cubic $t^3+at+b$ is not irreducible), 
this becomes a quadratic twist with respect to $k(\sqrt{-a})/k$,
which we deal with in the next section.

\subsection{Types $C_3$ and $D_4$}\label{subsec:D4quadtwist}
Finally, we compute the quadratic twists of the $D_4$-singularity. 
Let $k$ be a field with $\cha k\neq2$ , let
$L=k(\sqrt{d})$ be a quadratic extension of $k$,
and consider an injection
$$
\Gal(L/k) \,\to\, (\Norm_{\SL_{2,k}}(\BD_2)/\BD_2)(\overline{k})\,\cong\,\Sym_3.
$$
Then
$$
L[x,y]^{\BD_2\rtimes\Gal(L/k)} \,=\,L[u,v,w]^{\Gal(L/k)}\,=\,L[f_i;w]^{\Gal(L/k)}
$$
where $u,v,w$ and the $f_i$ are as in equations \eqref{really-BD2-invariants} 
and \eqref{eqn:fiS3twist}. 
By the description of $(\Norm_{\SL_2}(\BD_2)/\BD_2)(\overline{k})$ given in 
\S\ref{subsec:S3twist}, the generator $\tau'$ of $\Gal(L/k)$ 
acts as an $\Sym_3$-conjugate of $\tau$, that is, 
there exists $\epsilon\in\Sym_3$ such that
$$
\tau'(f_{\epsilon(0)})=f_{\epsilon(0)},\quad \tau'(f_{\epsilon(1)})=f_{\epsilon(2)}.
$$
Thus,
$$
L[x,y]^{\BD_2\rtimes\Gal(L/k)} \,=\,k[A,B,C],
$$
where
$$
A:=f_{\epsilon(0)},\quad B:=\sqrt{d}\left(f_{\epsilon(1)}-f_{\epsilon(2)}\right),
\quad C:=2\sqrt{d}w.
$$
Note we have used $\sum_i f_i=0$ to conclude $L[A,B]=L[f_i]$. 
Then, \eqref{Dn-sing} becomes
$$
C^2 \,=\, d\prod_i f_i.
$$
Since $A=f_{\epsilon(0)}=-(f_{\epsilon(1)}+f_{\epsilon(2)})$, we see
$$
4f_{\epsilon(1)}f_{\epsilon(2)} \,=\,A^2-d^{-1}B^2.
$$
Thus,
$$
4C^2 \,=\, A(dA^2-B^2).
$$
Replacing $C$ with $2C$ and $A$ with $-A$, we see the above equation 
is the same as the $C_3$-singularity equation in 
Theorem \ref{tm:twisted-invariants} 
after replacing $X$ and $Y$ with $2X$ and $2Y$.
Note however, that 
the computations in \S\ref{subsec:CDtwist} correspond
to the computations in this section in the special case, where 
the injection 
$\Gal(L/k)\to\Sym_3$ has image equal to the subgroup
generated by $\tau$, which covers only one of the three
subgroups of order 2 inside $\Sym_3$.

\subsection{Type $E_7$}
Let $k$ be a field with $\cha k\neq2,3$.
We have $\BT^*$ is a normal subgroup scheme of $\BO$ of index 2
and thus
$$
k[x,y]^\BO\,=\,(k[x,y]^{\BT^*})^{\ZZ/2}\,\cong\,
(k[C,D,E]/(D^2+108\,C^4-E^3))^{\ZZ/2},
$$
where $C=xy(x^4-y^4)$, 
$D=x^{12}+y^{12}-33x^4y^4(x^4+y^4)$,
and $E=x^8+14x^4y^4+y^8$, and where
the $\ZZ/2$-action on them is given by 
$$
  C\mapsto -C, \quad D\mapsto -D,\quad E\mapsto E,
$$
see \S\ref{subsec:S3twist}.
Setting $R:=C^2$, $S:=D^2$, $T:=CD$,
we see that $R,S,T,E$ are invariants.
We have $RS=T^2$ and since 
$S=D^2=E^3-108C^4=E^3-108R^2$, 
we can eliminate $S$ from the invariants.
We find
$$
k[x,y]^\BO\,=\,k[R,T,E]/(T^2+108\,R^3-RE^3),
$$
which is isomorphic to 
$$
k[R',S',T']/(T'^2+4R'^3-R'E'^3)
$$
after suitable rescaling.
This is a rational double point of type $E_7$.

By Theorem \ref{thm:NGmodGAutMcKay-more-general} and 
Corollary \ref{cor:twisted-invariants-NGmodG}, there are no
non-trivial twisted forms of the RDP of type $E_7$
over perfect fields.

\subsection{Type $E_8$}
Let $k$ be a field with $\cha k\neq2,3,5$ and
let $\zeta_5\in\overline{k}$ be a primitive $5$.th root of
unity.
Consider the action of $r,t$ on $k(\zeta_5)[x,y]$
as described in Proposition \ref{prop:BI-descends}.
Then,
$$
f\,:=\, xy(x^{10}-11xy-y^{10})
$$
is an invariant, see \cite[Kapitel II.\S13]{Klein}.
(Note that our action of $\BI$ is slightly different
from Klein's, which leads to different signs here and there.)
Following loc.cit., we consider the Hessian of $f$
\begin{eqnarray*}
H &:=& \frac{1}{121}\det
\begin{bmatrix}
    \frac{\partial^2}{\partial x^2}f & \frac{\partial^2}{\partial x\partial y}f \\
      \frac{\partial^2}{\partial x\partial y}f & \frac{\partial^2}{\partial y^2}f   
\end{bmatrix} \\
&=&
-(x^{20}+y^{20})-228(x^{15}y-x^5y^{15})-494x^{10}y^{10}
\end{eqnarray*}
(if $\cha k=11$, one ignores the determinant 
and takes the second line as definition of $H$) and check that it is an
invariant, as well as
\begin{eqnarray*}
  T &:=& \frac{1}{20}\det
\begin{bmatrix}
    \frac{\partial}{\partial x}f & \frac{\partial}{\partial y}f \\
      \frac{\partial}{\partial x}H & \frac{\partial}{\partial y}H   
\end{bmatrix}  \\
&=& (x^{30}+y^{30})-522(x^{25}y^5-x^5y^{25})-10005(x^{20}y^{10}+x^{10}y^{20}),
\end{eqnarray*}
which is also an invariant.
These satisfy the equation
$$
T^2+H^3+1728 f^5 \,=\,0
$$
and after rescaling $f,T,H$, these satisfy the 
equation $T'^2+H'^3+f'^5=0$.
We also note that the invariants their algebraic dependence
descend from $k(\zeta_5)$ to $k$.

By Theorem \ref{thm:NGmodGAutMcKay-more-general} and 
Corollary \ref{cor:twisted-invariants-NGmodG}, there are no
non-trivial twisted forms of the RDP of type $E_8$
over perfect fields.

\end{document}